\DeclareMathOperator{\supp}{supp}
\DeclareMathOperator{\diam}{diam}
\newcommand{\ri}{{\mathrm{i}}}
\newcommand{\re}{{\mathrm{e}}}
\newcommand{\rd}{\mathrm{d}}
\newcommand{\hull}{\mathrm{Hull}}
\newcommand{\R}{\mathbb{R}}
\newcommand{\N}{\mathbb{N}}
\newcommand{\C}{\mathbb{C}}
\newcommand{\cS}{\mathcal{S}}
\newcommand{\cD}{\mathcal{D}}
\newcommand{\cH}{\mathcal{H}}
\newcommand{\cI}{\mathcal{I}}
\newcommand{\cP}{\mathcal{P}}
\newcommand{\cT}{\mathcal{T}}
\newcommand{\cW}{{\mathcal W}}
\newcommand{\bA}{\mathbf{A}}
\newcommand{\bB}{\mathbf{B}}
\newcommand{\normt}[2]{\|#1\|_{#2}}
\newtheorem{thm}{Theorem}[section]
\newtheorem{lem}[thm]{Lemma}
\newtheorem{defn}[thm]{Definition}
\newtheorem{prop}[thm]{Proposition}
\newtheorem{rem}[thm]{Remark}
\newcommand{\tH}{\widetilde{H}}
\newcommand{\dimH}{{\rm dim_H}}
\begin{document}


%
%

\title{Acoustic scattering by impedance screens/cracks with fractal boundary: well-posedness analysis and boundary element approximation 
}
\date{}
\author{%
	{\sc
		J.\ Bannister\thanks{Email: joshua.smith.14@ucl.ac.uk},  
		A.\ Gibbs\thanks{Email: andrew.gibbs@ucl.ac.uk},
		D.\ P.\ Hewett\thanks{Email: d.hewett@ucl.ac.uk (corresponding author)}
	} \\[2pt]
	Department of Mathematics, University College London, UK.
}
\maketitle

\begin{abstract}
We study time-harmonic scattering in $\R^n$ ($n=2,3$) by a planar screen (a ``crack'' in the context of linear elasticity), assumed to be a non-empty bounded relatively open subset $\Gamma$ of the hyperplane $\Gamma_\infty=\R^{n-1}\times \{0\}$, on which impedance (Robin) boundary conditions are imposed. In contrast to previous studies, $\Gamma$ can have arbitrarily rough (possibly fractal) boundary. 
To obtain well-posedness for such $\Gamma$ we show how the standard impedance boundary value problem and its associated system of boundary integral equations must be supplemented with additional solution regularity conditions, which hold automatically when $\partial\Gamma$ is smooth. We show that the associated system of boundary integral operators is compactly perturbed coercive in an appropriate function space setting, strengthening previous results. 
This permits the use of Mosco convergence to prove convergence of boundary element approximations on smoother ``prefractal'' screens to the limiting solution on a fractal screen. 
We present accompanying numerical results, validating our theoretical convergence results, for three-dimensional scattering by a Koch snowflake and a square snowflake. 
\end{abstract}



\section{Introduction}

We consider time-harmonic acoustic scattering in $\R^n$ ($n=2$ or $3$) by a non-empty, bounded, relatively open subset $\Gamma$ of the hyperplane $\Gamma_\infty:=\R^{n-1}\times \{0\}\subset \R^n$. Following the convention in acoustics and electromagnetism we refer to $\Gamma$ as a ``screen'', but by Babinet's principle $\Gamma$ could equivalently represent a finite ``aperture'' in a screen of infinite extent, and in linear elasticity $\Gamma$ would be referred to as a ``crack'' or ``fracture''.  
The classical case where the relative boundary $\partial\Gamma$ of $\Gamma$ is smooth (e.g.\ Lipschitz) has been studied extensively, both analytically and numerically (for a list of references relating to Dirichlet and Neumann problems see \cite{chandler-wilde2019}). The current paper forms part of an ongoing research programme dedicated to extending the classical theory of screens to the case where $\partial\Gamma$ is non-smooth, for example fractal. A prototypical example for the current paper in the case $n=3$ would be where the screen $\Gamma\subset\R^2\times\{0\}\subset\R^3$ is the interior of the Koch snowflake (see Figure \ref{fig:Snowflakes}, top row).

Our motivation for considering scattering by fractal structures is twofold. 
On the one hand, from a modelling perspective, fractals provide a natural mathematical model for the multiscale roughness of many natural and man-made scatterers, such as dendritic structures like the human lung \cite{achdou2007transparent}, snowflakes, ice crystals and other atmospheric particles \cite{StWe:15}, and fractal antennas/transducers in electromagnetics/ultrasonics (\cite{GhSiKa:14,algehyne2018analysis}).
On the other hand, from a mathematical analysis perspective, the study of PDEs and integral equations on domains with fractal boundaries 
raises challenging questions that are stimulating new fundamental research in the theory of function spaces, variational problems and numerical discretizations (see, e.g.,\ \cite{mosco2013analysis}). 


The current paper concerns scattering by screens on which an impedance (Robin) boundary condition of the form $\partial_n u + \lambda u=0$ is imposed. Here $\partial_n$ is a normal derivative and $\lambda$ is a complex-valued quantity, possibly depending on position along the boundary.  Assuming the normal direction points into the propagation domain, one typically assumes $\Im[\lambda]\geq 0$ to prohibit energy creation at the boundary. When $\Im[\lambda]>0$ the boundary absorbs some portion of the wave energy incident upon it, while when $\Re[\lambda]\neq0$ the boundary imposes a phase shift on the reflected wave. 
Boundary value problem (BVP) and boundary integral equation (BIE) formulations of the impedance screen (or crack) problem were presented 
in \cite{kress2003integral} and \cite{ben2013application} for screens in 2D and 3D with implicitly assumed, but unspecified, boundary smoothness. 
The objectives of the paper are to (i) clarify the smoothness assumptions under which the formulations in \cite{kress2003integral} and \cite{ben2013application} are well-posed; (ii) present generalised formulations which are well-posed for arbitrary $\Gamma$ (including with fractal boundary) and for more general impedance parameters than were considered in  \cite{kress2003integral,ben2013application}; (iii) prove rigorous results concerning the convergence of boundary element method (BEM) approximations on smoother ``prefractal'' screens to the limiting solution on a fractal screen in the joint limit of prefractal and mesh refinement. 

The paper builds strongly on the related papers \cite{ScreenPaper}, which derives well-posed BVP and BIE formulations for sound-soft (Dirichlet) and sound-hard (Neumann) acoustic scattering by arbitrary bounded screens $\Gamma\subset \Gamma_\infty$, and \cite{chandler-wilde2019}, which provides a rigorous convergence analysis for BEM approximations in the sound-soft case.
As in these papers, our analysis is based on a variational formulation of the BIEs in appropriate fractional Sobolev spaces, 
which allows the question of prefractal to fractal convergence to be rephrased in terms of the Mosco convergence (defined in \ref{sec:Mosco} below) of the discrete BEM subspaces to the Sobolev space in which the limiting solution (on the fractal screen) lives.
Hence, properties of fractional Sobolev spaces and integral operators on non-smooth sets, explored recently in \cite{ChaHewMoi:13,InterpolationCWHM,HewMoi:15,CoercScreen2,caetano2018}, are crucial to our arguments throughout. 
Our well-posedness analysis is more involved than that of \cite{ScreenPaper}, since, in contrast to the Dirichlet and Neumann cases, the system of boundary integral operators for the impedance problem is not known to be coercive (strongly elliptic). 
We note that preliminary results on the impedance well-posedness analysis were presented in the conference paper \cite{HewettBannisterWaves2019}.


Our analysis of the impedance BVP and BIE formulations follows closely the approach in \cite{ben2013application}, but differs from it in a number of respects. First, we take more care than in \cite{ben2013application} to distinguish between distributions on $\Gamma_\infty$ and distributions on $\Gamma$, and to state precisely in which function space setting the impedance boundary conditions are to be imposed. This is crucial when $\partial\Gamma$ is non-smooth since certain classical embedding results may fail (e.g.\ $\tilde{H}^{-1/2}(\Gamma)$ may fail to be embedded in $H^{-1/2}(\Gamma)$), meaning that we need to adopt a different function space setting for the BIE than was used in \cite{ben2013application} (see Remark \ref{rem:Comparison}). 
Second, having made this change of function space setting we are able to prove that the boundary integral operator is compactly perturbed coercive, rather than merely compactly perturbed invertible, as was shown in \cite{ben2013application}. This improvement is crucial when it comes to using Mosco convergence to prove prefractal to fractal convergence (see Theorem \ref{th:NA}). 
Third, the lack of smoothness of $\partial\Gamma$ allows the possibility that $\tH^{1/2}(\Gamma)\neq \{u\in H^{1/2}(\Gamma_\infty):\supp u\in \overline\Gamma\}$ and/or $H^{-1/2}_{\partial\Gamma}\neq \{0\}$, meaning that we need to impose additional smoothness assumptions on the BVP solution in order to guarantee BVP uniqueness (see Definition \ref{def:BVP} and Theorem \ref{Thm2}). 
Fourth, the proof of BVP uniqueness in \cite{ben2013application} cites the earlier paper \cite{kress2003integral}, where a proof is given only for the case where $\Gamma$ is a (curved) line segment, with the singular behaviour near $\partial\Gamma$ being dealt with by a cut-off argument. Rather than attempting to generalise this argument, we provide a different uniqueness proof based on distributional calculus that is valid in the general non-smooth case (see Theorem \ref{thm:BVPuniqueness}).
Fifth, we argue that while the well-posedness analysis in \cite{ben2013application} purports to hold for general bounded impedance functions $\lambda^\pm\in L^\infty(\Gamma)$ on the top ($+$) and bottom ($-$) of the screen (with $\Im{\lambda^\pm}\geq 0$ a.e.\ and $1/(\lambda^++\lambda^-)$ also bounded), we believe this to be false, since it relies on the incorrect assertion (see \cite[p215]{ben2013application}) that $L^\infty$ functions are multipliers on the spaces $H^{\pm 1/2}$ (again, see Remark \ref{rem:Comparison}). 
By using an integral equation formulation different to that in \cite{ben2013application}, and generalising that in \cite{kress2003integral}, we provide a corrected well-posedness analysis, which reveals that the assumption that $1/(\lambda^++\lambda^-)$ is bounded is unnecessary for BVP well-posedness. 
Finally, we point out that in \cite{ben2013application} and \cite{kress2003integral} the screen/crack is allowed to have non-zero curvature, whereas for simplicity we restrict our attention to the case where the screen/crack is planar (flat), as we did in \cite{ScreenPaper} and \cite{chandler-wilde2019} for the Dirichlet/Neumann cases. This simplifies the BIE formulation because the double layer and adjoint double layer operators ($K_\sigma$ and $K'_\sigma$ in \cite[Eqn~(9)]{ben2013application}) are identically zero on a flat screen. However, we expect that the generalisation to curved fractal screens (which we leave for future work) should not present too much difficulty (see Remark \ref{rem:Curved}). 

The structure of the paper is as follows. 
In \S\ref{sec:Prelim} we collect some key results concerning variational problems, Sobolev spaces and boundary integral operators that will be used throughout the paper. 
In \S\ref{sec:Problem} we state the impedance screen BVP and BIE and present our well-posedness analysis. 
In \S\ref{sec:BEM} we prove convergence of exact and numerical solutions on prefractal screens to solutions on limiting fractal screens using the framework of Mosco convergence. In \S\ref{sec:BEMegs} we consider some specific examples of fractal screens to which our analysis applies. In \S\ref{sec:Numerical} we give details of our BEM implementation and provide numerical results illustrating our theoretical predictions.

\section{Preliminaries\label{sec:Prelim}}
In this section we review some background theory and set our notation.
\subsection{Variational problems and Mosco convergence} \label{sec:Mosco}

Suppose that $H,\cH$ are Hilbert spaces, with $\cH$ a unitary realisation of $H^*$ with dual pairing $\langle\cdot,\cdot\rangle$, meaning that the map $u\mapsto\langle u,\cdot\rangle$ is a unitary isomorphism from $\cH$ to $H^*$. We recall 
that a bounded linear operator $A:H\to\cH$ is said to be \textit{coercive} if there exists $\alpha>0$ such that
\begin{equation*} 
|\langle Au,u\rangle|\ge \alpha\|u\|^2_H,
\quad \mbox{for all } u\in H.
\end{equation*}
We call $A$ {\em compactly perturbed coercive} if $A=A_0+A_1$ with $A_0$ coercive 
and $A_1$ compact. In this case, $A$ is invertible if and only if it is injective, by the standard Reisz-Fredholm theory. 

Let $W\subset H$ be a closed subspace of $H$ 
and let $\cW := (W^{a,\cH})^\perp\subset \cH$ (where $W^{a,\cH}$ is the annihilator of $W$ in $\cH$) be the unitary realisation of $W^*$ with dual pairing inherited from $\cH\times H$ (cf.\ \cite[Lem.~2.2]{ChaHewMoi:13}).  
We say $A$ is invertible on $W$ if $P_{\cW}A|_{W}$ is invertible, where $P_{\cW}$ is $\cH$-orthogonal projection onto $\cW$. Equivalently, $A$ is invertible on $W$ if, for every $f\in \cW$, there exists a unique $u_W\in W$ such that
\begin{equation} \label{eq:varW}
\langle Au_W,v\rangle = \langle f,v\rangle, \quad \mbox{for all } v\in W.
\end{equation}

When considering the approximability of the solution of the variational problem \eqref{eq:varW} by the solutions of variational problems posed on a sequence of closed subspaces $W_j\subset H$, 
an important tool is the notion of Mosco convergence.\footnote{
	Our definition of Mosco convergence coincides with that of \cite[Definition 1.1]{Mosco69}, specialised from the setting of convex subsets of a general Banach space to the setting of closed linear subspaces of a Hilbert space.
}
\begin{defn}
	\label{def:mosco} Let $W$ and $W_j$, for $j\in \N$, be closed subspaces of a Hilbert space $H$. We say that $W_j$ 
	Mosco-converges to $W$ (written $W_j\xrightarrow M W$) if
	\begin{itemize}
		\item[(i)] For  every $w\in W$ and $j\in \N$ there exists $w_j\in W_j$ such that $\|w_j- w\|_H\to 0$;
		\item[(ii)] If $(W_{j_m})$ is a subsequence of $(W_j)$, $w_m\in W_{j_m}$, for $m\in \N$, and  $w_m\rightharpoonup w$ as $m\to\infty$, then $w\in W$.
	\end{itemize}
\end{defn}
\begin{lem}[{\cite[Lemma 2.5]{chandler-wilde2019}}] \label{lem:dec3} Let $W$ and $W_j$, for $j\in \N$, be closed subspaces of a Hilbert space $H$ such that $W_j\xrightarrow M W$ 
	as $j\to\infty$. 
	Let $A:H\to\cH$ be compactly perturbed 
	coercive, 
	and invertible on $W$. 
	Then there exists $J\in \N$ such that $A$ is invertible on $W_j$ for $j\geq J$ and 
	\[
	\sup_{j\geq J} \left\|(P_{\cW_j}A|_{W_j})^{-1}\right\| <\infty.
	\]
	Further, if given $f\in \cH$ we define $u_W:=\left(P_{\cW}A|_{W}\right)^{-1}f$ and $u_{W_j}:=(P_{\cW_j}A|_{W_j})^{-1}f$ (for $j\geq J$)
	then $\|u_{W_j}-u_W\|_H\to 0$ as $j\to\infty$.
\end{lem}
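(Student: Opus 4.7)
The overall strategy is to combine the Riesz--Fredholm theory with a contradiction argument exploiting Mosco convergence and the compactness of $A_1$. First I would note that coercivity transfers to each $W_j$: for $u\in W_j$ the element $(I-P_{\cW_j})A_0 u$ lies in $\cW_j^{\perp,\cH}=W_j^{a,\cH}$, so $\langle P_{\cW_j}A_0 u,u\rangle=\langle A_0 u,u\rangle$ and therefore $|\langle P_{\cW_j}A_0 u,u\rangle|\geq\alpha\|u\|_H^2$; meanwhile $P_{\cW_j}A_1|_{W_j}$ is still compact. Hence each $P_{\cW_j}A|_{W_j}:W_j\to\cW_j$ is compactly perturbed coercive and, by Riesz--Fredholm, is invertible iff injective.

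The core step is to prove, by contradiction, eventual injectivity together with the uniform bound on inverses. If either failed, one could extract a subsequence (not relabelled) and $u_j\in W_j$ with $\|u_j\|_H=1$ and $\|P_{\cW_j}A u_j\|_\cH\to 0$. Passing to a further weakly convergent subsequence $u_j\rightharpoonup u^*$ in $H$, Mosco (ii) gives $u^*\in W$. For each $v\in W$, Mosco (i) supplies $v_j\in W_j$ with $v_j\to v$; since $v_j\in W_j$, $\langle P_{\cW_j}A u_j,v_j\rangle=\langle A u_j,v_j\rangle$, and the right-hand side converges to $\langle A u^*,v\rangle$ using $A u_j\rightharpoonup A u^*$ weakly in $\cH$ and $v_j\to v$ strongly. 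The left-hand side tends to $0$, so $P_\cW A u^*=0$ and $u^*=0$ by the assumed invertibility of $A$ on $W$. Coercivity then gives
\[
\alpha\|u_j\|_H^2\leq|\langle A_0 u_j,u_j\rangle|\leq |\langle P_{\cW_j}A u_j,u_j\rangle|+|\langle A_1 u_j,u_j\rangle|.
\]
The first term is bounded by $\|P_{\cW_j}A u_j\|_\cH\|u_j\|_H\to 0$, and the second vanishes because $u_j\rightharpoonup 0$ and $A_1$ is compact imply $A_1 u_j\to 0$ strongly in $\cH$. This contradicts $\|u_j\|_H=1$ and establishes both claims simultaneously.

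For the convergence $u_{W_j}\to u_W$, set $u_j:=u_{W_j}$ and $u:=u_W$. The uniform bound on inverses bounds $\|u_j\|_H$, so a weakly convergent subsequence $u_j\rightharpoonup u^*\in W$ exists by Mosco (ii). Testing $\langle A u_j,v_j\rangle=\langle f,v_j\rangle$ against $v_j\in W_j$ converging strongly to arbitrary $v\in W$ (via Mosco (i)) shows that $u^*$ satisfies \eqref{eq:varW}, so $u^*=u$ by uniqueness, and a standard argument promotes this to weak convergence of the full sequence. Strong convergence follows from
\[
\alpha\|u_j-u\|_H^2\leq |\langle A(u_j-u),u_j-u\rangle|+|\langle A_1(u_j-u),u_j-u\rangle|,
\]
where the second term vanishes by compactness of $A_1$, and the first is handled by expanding and using $\langle A u_j,u_j\rangle=\langle f,u_j\rangle\to\langle f,u\rangle=\langle A u,u\rangle$ together with the weak convergence of $A u_j$ tested against the fixed element $u$.

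The principal obstacle is the interlocking use of the two Mosco conditions with the compactness of $A_1$ in the injectivity/uniform-bound step: condition (i) produces admissible test functions in $W_j$ approximating elements of $W$, condition (ii) identifies weak limits as lying in $W$, and compactness of $A_1$ is what converts a weak contradiction into a strong one through the coercivity of $A_0$. Once this core estimate is in hand, the remaining convergence statements reduce to variational bookkeeping.
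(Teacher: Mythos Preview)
The paper does not actually prove this lemma; it is quoted verbatim from \cite[Lemma~2.5]{chandler-wilde2019} with no proof given here. So there is no in-paper argument to compare against. That said, your proof is correct and is essentially the standard argument one finds in the cited reference: transfer of coercivity to each $W_j$ via $\langle P_{\cW_j}A_0u,u\rangle=\langle A_0u,u\rangle$, a contradiction/subsequence argument combining Mosco~(i) for test functions, Mosco~(ii) for weak limits, and compactness of $A_1$ to upgrade weak to strong, followed by the same ingredients to obtain $u_{W_j}\to u_W$. The only minor point worth tightening is the final estimate for strong convergence: your displayed inequality $\alpha\|u_j-u\|^2\le|\langle A(u_j-u),u_j-u\rangle|+|\langle A_1(u_j-u),u_j-u\rangle|$ follows from $A_0=A-A_1$ and the triangle inequality, but you might make that step explicit since as written it could read as if $A$ itself were coercive.
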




\subsection{Sobolev spaces, traces and boundary integral operators} \label{sec:SobolevSpaces}

Our notation follows that of \cite{McLean} and \cite{ChaHewMoi:13}.
Let $m\in\N$.
For a subset $E\subset\R^m$ we denote its complement $E^c:=\R^m\setminus E$, its closure $\overline{E}$ and its interior $E^\circ$.
We denote the Hausdorff (fractal) dimension of $E$ by $\dimH{E}$. 
For $m>1$ we say that a non-empty open set $\Omega\subset \R^m$ is 
Lipschitz 
if its boundary $\partial\Omega$ can at each point be locally represented as the graph (suitably rotated) of a Lipschitz continuous function from $\R^{m-1}$ to $\R$, with $\Omega$ lying only on one side of $\partial\Omega$.
For a more detailed definition see, e.g., \cite[Defn~1.2.1.1]{Gri}.
For $m=1$, we say that $\Omega\subset \R$ is Lipschitz if $\Omega$ is a countable union of open intervals whose closures are disjoint and whose endpoints have no limit points.

For $s\in \R$, let $H^s(\R^m)$ denote the Hilbert space of tempered distributions
whose Fourier transforms (defined as $\hat{u}(\xi):= \frac{1}{(2\pi)^{m/2}}\int_{\R^m}\re^{-\ri \xi\cdot x}u(x)\,\rd x$ for $u\in C_0^\infty(\R^m)$) are locally integrable with
\[\|u\|_{H^s(\R^m)}^2:=\int_{\R^m}(1+|\xi|^2)^{s}\,|\hat{u}(\xi)|^2\,\rd \xi < \infty.\]
In particular,
$H^0(\R^m)=L^2(\R^m)$ with equal norms. The dual space of $H^s(\R^m)$ can be unitarily realised as $(H^s(\R^m))^*\cong H^{-s}(\R^m)$,
with dual pairing
\begin{align}\label{DualDef}
\left\langle u, v \right\rangle_{H^{-s}(\R^m)\times H^{s}(\R^m)}:= \int_{\R^m}\hat{u}(\xi) \overline{\hat{v}(\xi)}\,\rd \xi,
\end{align}
which coincides with the $L^2(\R^m)$ inner product when both $u$ and $v$ are in $L^2(\R^m)$.

For any non-empty open set $\Omega\subset\R^m$ and any closed set $F\subset \R^m$, we define
\begin{equation*} 
\tH^s(\Omega):=\overline{C^\infty_0(\Omega)}^{H^s(\R^m)}
\quad \text{and} \quad 
H_F^s := \{u\in H^s(\R^m): \supp{u} \subset F\}. 
\end{equation*}
Clearly $\tH^s(\Omega)\subset H^s_{\overline\Omega}$, and when $\Omega$ is sufficiently regular
it holds that $\tH^s(\Omega)=H^s_{\overline\Omega}$; 
however, in general these two closed subspaces of $H^s(\R^m)$ can differ \cite[\S3.5]{ChaHewMoi:13}. 
We also define 
\begin{align*}
H^s(\Omega)&:=\{U|_\Omega : U\in H^s(\R^m)\},\qquad
\|u\|_{H^{s}(\Omega)}:=\inf_{\substack{U\in H^s(\R^m)\\ U|_{\Omega}=u}}\normt{U}{H^{s}(\R^m)}.
\end{align*}
Although $H^s(\Omega)$ is a space of distributions on $\Omega$, it can be identified with a space of distributions on $\R^m$, namely $(H^s_{\Omega^c})^\perp\subset H^s(\R^m)$, where $^\perp$ denotes orthogonal complement in $H^s(\R^m)$; the restriction operator $|_\Omega :(H^s_{\Omega^c})^\perp\to H^s(\Omega)$ is a unitary isomorphism between the two spaces. 
A unitary realisation of $(\tH^s(\Omega))^*$, valid for arbitrary open $\Omega\subset\R^m$ (see \cite[Thm.~3.3]{ChaHewMoi:13}) is
\begin{align}
\label{isdual}
(\tH^s(\Omega))^*  \cong H^{-s}(\Omega)\cong (H^{-s}_{\Omega^c})^\perp  \quad \mbox{with} \quad
\langle u,v \rangle_{H^{-s}(\Omega)\times \tH^s(\Omega)}:=\langle U,v \rangle_{H^{-s}(\R^m)\times H^s(\R^m)},
\end{align}
where $U\in H^{-s}(\R^m)$ is any extension of $u\in H^{-s}(\Omega)$ with $U|_\Omega=u$. 
For bounded open $\Omega\subset \R^n$ and compact $K\subset \R^n$ the embeddings $H^t(\Omega)\subset H^s(\Omega)$ and $H^t_K\subset H^s_K$ are compact for $t>s$ \cite[Thm~3.27]{McLean}. 

Clearly $H^0(\Omega)=L^2(\Omega)$ with equal norms. The space $H^0(\Omega)$ is also unitarily isomorphic to the space $\widetilde{H}^0(\Omega)$,  
the unitary isomorphism being given by the 
zero extension operator $\,\widetilde{}:H^0(\Omega)\to \tH^0(\Omega)$ defined for $u\in H^0(\Omega)$ by $\widetilde{u}(x)=u(x)$ for $x\in \Omega$ and $\widetilde{u}(x)=0$ for $x\in \Omega^c$, whose inverse is the  
restriction operator $|_\Omega:\tH^0(\Omega)\to H^0(\Omega)$. 
However, we shall continue to use the notations $H^0(\Omega)$ and $\tH^0(\Omega)$, rather than simply writing $L^2(\Omega)$, in order to maintain the distinction between distributions on $\R^m$ and distributions on $\Omega$, since this distinction 
is crucial to many of our arguments.  
We recall that $L^\infty(\Omega)$ is a space of multipliers on $H^0(\Omega)$, with $\|\phi u\|_{H^0(\Omega)} \leq  \|\phi\|_{L^\infty(\Omega)}\|u\|_{H^0(\Omega)}$ for $\phi \in L^\infty(\Omega)$ and $u \in H^0(\Omega)$. 
We note also that the space $H^0_{\overline\Omega}$ can be similarly identified (by a unitary isomorphism) with $L^2(\overline\Omega)$, which itself can be identified with $L^2(\Omega)$ if and only if the Lebesgue measure of $\partial\Omega$ is zero.

For the screen scattering problem, we define Sobolev spaces on the hyperplane $\Gamma_\infty = \R^{n-1}\times\{0\}$ by associating $\Gamma_\infty$ with $\R^{n-1}$ (so in the definitions earlier in this section we are taking $m=n-1$) and setting $H^s(\Gamma_\infty):= H^s(\R^{n-1})$, for $s\in\R$.
For $E\subset\Gamma_\infty$ we set $\widetilde{E}:=\{\widetilde x\in\R^{n-1}: (\widetilde x,0)\in E\}\subset \R^{n-1}$.
Then for a closed subset $F\subset\Gamma_\infty$ we define $H^s_F:=H^s_{\widetilde{F}}\subset H^s(\Gamma_\infty)$, and for a (relatively) open subset $\Omega\subset \Gamma_\infty$ we set
$\tH^{s}(\Omega):=\tH^{s}(\widetilde{\Omega})\subset H^s(\Gamma_\infty)$
and $H^{s}(\Omega):=H^{s}(\widetilde{\Omega})$, etc.

In the exterior domain $D:=\R^n\setminus\overline{\Gamma}$ we work with Sobolev spaces defined via weak derivatives. 
Given a non-empty open $\Omega\subset \R^n$, let
$W^1(\Omega) := \{u\in L^2(\Omega): \nabla u \in L^2(\Omega)\}$
and let $W^{1,\mathrm{loc}}(\Omega)$ denote the ``local'' space in which square integrability of $u$ and $\nabla u$ is required only on bounded subsets of $\Omega$.
Note that $H^1(D)\subsetneqq W^1(D)$. 
We define
$U^+:=\{(x_1,\ldots,x_n)\in\R^n:x_n>0\}$ and $U^-:=\R^n\setminus\overline{U^+}$, and from the half spaces $U^\pm$ to the hyperplane $\Gamma_\infty$ we define the standard trace operators $\gamma^\pm:W^1(U^\pm)\to H^{1/2}(\Gamma_\infty)$ and $\partial_n^\pm:
\{u\in W^1(U^\pm): \Delta u \in L^2(U^\pm)\}
\to H^{-1/2}(\Gamma_\infty)$. 
We adopt the convention that the unit normal vector on $\Gamma_\infty$ points into $U^+$, so $\partial_n^\pm u^\pm = (\partial u^\pm/\partial x_n)|_{x_n=0}$ for smooth $u^\pm$ defined in $\overline{U^\pm}$. 
When applying $\gamma^\pm$ and $\partial_n^\pm $ to elements $u$ of the local space $W^{1,\mathrm{loc}}(D)$ (with $\Delta u$ locally in $L^2$ for $\partial_n^\pm $), we first pre-multiply $u$ by a cutoff 
$\chi\in C^\infty_0(\R^n)$ with $\chi=1$ in some neighbourhood of $\overline\Gamma$.  
But whenever an expression is independent of the choice of $\chi$ we omit the cutoff; this applies e.g.\ to the restrictions $\gamma^\pm u|_\Gamma:=\gamma^\pm\chi u|_\Gamma$ and $\partial_n^\pm u|_\Gamma:=\partial_n^\pm\chi u|_\Gamma$, and to the jumps $[u]:=\gamma^+\chi u-\gamma^-\chi u\in H^{1/2}_{\overline\Gamma}$ and $[\partial_n u]:=\partial_n^+\chi u-\partial_n^-\chi u\in H^{-1/2}_{\overline\Gamma}$, provided $u$ is continuously differentiable across $\Gamma_\infty \setminus \overline\Gamma$.

Let $\cS:H^{-1/2}_{\overline\Gamma}\to C^2(D)\cap W^{1,{\rm loc}}(\R^n)$ and $\cD:H^{1/2}_{\overline\Gamma}\to C^2(D)\cap W^{1,{\rm loc}}(D)$ be the single- and double-layer potentials, which for $\varphi\in C^\infty_0(\Gamma)$ and $x\in D$ satisfy
\begin{align}
\label{SLPDLP}
\cS\varphi(x)=\int_{\Gamma}\Phi(x,y)\varphi(y)\,\rd s(y), 
\qquad
\cD\varphi(x)=\int_{\Gamma}\frac{\partial\Phi(x,y)}{\partial n(y)}\varphi(y)\,\rd s(y),
\end{align}
with $\Phi(x,y)=\re^{\ri k |x-y|}/(4\pi |x-y|)$ ($n=3$) or $\Phi(x,y)=(\ri/4)H^{(1)}_0(k|x-y|)$ ($n=2$), 
and let $S$ and $T$ be the single-layer and hypersingular boundary integral operators 
defined by
\begin{align}
\label{BIOs}
S\varphi:=(\gamma^\pm\cS\varphi)|_{\Gamma},
\qquad
T\varphi&:=(\partial_n^\pm\cD\varphi)|_{\Gamma},
\end{align}
which extend to continuous operators $S:H^{s}_{\overline\Gamma}\to H^{s+1}(\Gamma)$ and $T:H^{s}_{\overline\Gamma}\to H^{s-1}(\Gamma)$ for all $s\in\R$.

\section{Problem statement and well-posedness analysis}
\label{sec:Problem}
Let $\Gamma$ be a bounded, relatively open subset of $\Gamma_\infty=\R^{n-1}\times\{0\}\subset\R^n$, $n=2,3$, and let $D:=\R^n\setminus\overline{\Gamma}$. 
The impedance screen BVP we study is stated below. Conditions \eqref{BVP3} and \eqref{BVP4} 
are non-standard assumptions that make the BVP well-posed for arbitrary bounded open screens $\Gamma\subset\Gamma_\infty$. 
As explained in \S\ref{sec:SobolevSpaces}, even though both spaces can be identified with $L^2(\Gamma)$, the motivation for our use of the notations $H^0(\Gamma)$ and $\tH^0(\Gamma)$ throughout this section is to maintain a clear distinction between functions/distributions on the screen $\Gamma$ and those on the hyperplane $\Gamma_\infty$.  

\begin{defn}[Impedance screen BVP]
\label{def:BVP}
Let $k>0$ and 
$\lambda^\pm\in L^\infty(\Gamma)$ with $\Im[\lambda^\pm]\geq 0$ a.e.\ on $\Gamma$.
Given $g^\pm\in H^{0}(\Gamma)\left(= L^2(\Gamma)\right)$, 
find $u\in C^2(D)\cap W^{1,{\rm loc}}(D)$ such that 
\begin{align}
\label{eq:HE}
\Delta u + k^2 u = 0, &\qquad \text{in }D,\\
\label{eq:SRC}
\partial_r u - iku = o(r^{-(n-1)/2}), &\qquad \text{uniformly as }r=|x|\to\infty,\\\label{eq:BC}
\partial^\pm_n u|_\Gamma \pm \lambda^\pm \gamma^\pm u|_\Gamma = g^\pm, &\qquad \text{on }\Gamma,\\
\label{BVP3}
[u]\in \widetilde{H}^{1/2}(\Gamma),\\
\label{BVP4}
[\partial_n u]\in \widetilde{H}^0(\Gamma)\left(\cong L^2(\Gamma)\right).
\end{align}
To model scattering of an incident wave $u^i$ (a solution of the Helmholtz equation in a neighbourhood of $\Gamma$) 
we would take $g^\pm=-(\partial^\pm_n u^i|_\Gamma \pm \lambda^\pm \gamma^\pm u^i|_\Gamma)$ 
and interpret $u$ as the scattered field.
\end{defn}

Our main result in this section is the following.
\begin{thm}
	\label{Thm1}
	The BVP \eqref{eq:HE}-\eqref{BVP4} in Definition \ref{def:BVP} is well-posed for any bounded open screen $\Gamma\subset\Gamma_\infty$. 
\end{thm}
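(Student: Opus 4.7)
The plan is to establish Theorem~\ref{Thm1} in two stages: uniqueness and existence, the latter via an equivalent $2\times 2$ system of boundary integral equations to which Riesz--Fredholm theory applies.

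First, for uniqueness, I would take $g^\pm=0$ and apply Green's first identity in $B_R\setminus\overline\Gamma$ for large $R>0$. The regularity conditions \eqref{BVP3}--\eqref{BVP4} are precisely what is needed to make the boundary contributions on both faces of $\Gamma$ well-defined as the dual pairings of $[u]\in\tH^{1/2}(\Gamma)$ with $\partial_n^\pm u|_\Gamma\in H^{-1/2}(\Gamma)$ and of $[\partial_n u]\in\tH^0(\Gamma)$ with $\gamma^\pm u|_\Gamma\in H^{1/2}(\Gamma)\subset H^0(\Gamma)$. Substituting the homogeneous impedance conditions, taking imaginary parts and using $\Im\lambda^\pm\geq 0$ yields a Rellich-type bound on the far field; Rellich's lemma forces $u\equiv 0$ in the unbounded component of $D$, and unique continuation across the smooth part $\Gamma_\infty\setminus\overline\Gamma$ extends this to all of $D$. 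This is the content of Theorem~\ref{thm:BVPuniqueness}, proved by distributional calculus valid for arbitrary $\partial\Gamma$.

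For existence, I would use the layer-potential ansatz $u=\cD\phi-\cS\psi$ with unknowns $(\phi,\psi)\in \tH^{1/2}(\Gamma)\times\tH^0(\Gamma)$, which automatically satisfies \eqref{eq:HE}, \eqref{eq:SRC}, \eqref{BVP3} and \eqref{BVP4} with $\phi=[u]$ and $\psi=[\partial_n u]$. On the flat hyperplane the usual jump/trace relations collapse to $\gamma^\pm u = \pm\tfrac12\phi - S\psi$ and $\partial_n^\pm u = T\phi \pm \tfrac12\psi$, so substituting into \eqref{eq:BC} and taking sum and difference produces the BIE system
\begin{equation*}
A\begin{pmatrix}\phi\\ \psi\end{pmatrix}
:=
\begin{pmatrix}
2T\phi + \tfrac12(\lambda^++\lambda^-)\phi + (\lambda^--\lambda^+)S\psi\\
\psi + \tfrac12(\lambda^+-\lambda^-)\phi - (\lambda^++\lambda^-)S\psi
\end{pmatrix}
=
\begin{pmatrix}g^++g^-\\ g^+-g^-\end{pmatrix},
\end{equation*}
viewed as an operator from $\tH^{1/2}(\Gamma)\times\tH^0(\Gamma)$ into its natural dual realisation $H^{-1/2}(\Gamma)\times H^0(\Gamma)$. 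The crucial function-space point, highlighted in Remark~\ref{rem:Comparison}, is that the second component sits in $\tH^0(\Gamma)\cong L^2(\Gamma)$ rather than $\tH^{-1/2}(\Gamma)$: this guarantees that the $L^\infty$ impedance coefficients act as bounded multipliers on every term, which is what fails in the formulation of \cite{ben2013application}.

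The main technical step is to verify that $A$ is compactly perturbed coercive. I would split $A=A_0+A_1$ with $A_0=\mathrm{diag}(2T,\mathrm{Id})$: the hypersingular operator $2T$ is compactly perturbed coercive on $\tH^{1/2}(\Gamma)$ for arbitrary bounded open $\Gamma$ (a consequence of the companion analysis in \cite{ScreenPaper}), and the identity is trivially coercive on $\tH^0(\Gamma)$. The remaining entries collected in $A_1$ are compact: the multiplication terms $\phi\mapsto\lambda^\pm\phi$ factor through the compact Rellich embedding $\tH^{1/2}(\Gamma)\hookrightarrow H^0(\Gamma)\hookrightarrow H^{-1/2}(\Gamma)$, while the $S$-terms factor through $\tH^0(\Gamma)\subset H^{-1/2}_{\overline\Gamma}\xrightarrow{S} H^{1/2}(\Gamma)$ followed by compact embedding of $H^{1/2}(\Gamma)$ into $H^0(\Gamma)$ or $H^{-1/2}(\Gamma)$. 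Injectivity of $A$ is then immediate from uniqueness: any $(\phi,\psi)\in\ker A$ yields via the representation a solution of the homogeneous BVP, forcing $u\equiv 0$ and hence $\phi=[u]=0$, $\psi=[\partial_n u]=0$. Riesz--Fredholm then gives invertibility of $A$, and the corresponding potential solves the original BVP with continuous dependence on $(g^+,g^-)$. The hardest step is the compact-perturbation-coercivity of the full matrix operator: the diagonal part is inherited from \cite{ScreenPaper}, but one must verify carefully that each off-diagonal $L^\infty$-multiplier coupling genuinely lands in a compactly embedded subspace of the codomain, and this is precisely the delicate point where the corrected $\tH^0$ function-space setting, rather than $\tH^{-1/2}$, is essential.
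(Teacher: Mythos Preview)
Your overall architecture---uniqueness via Green's identity and Rellich, then existence via a $2\times2$ BIE shown to be Fredholm of index zero and injective---matches the paper. Your BIE, obtained by the plain sum and difference of the two impedance conditions, is different from the paper's (which takes the weighted combination $\lambda^-(\cdot)^++\lambda^+(\cdot)^-$ and then divides by $\lambda^++\lambda^-$, using the hypothesis $1/(\lambda^++\lambda^-)\in L^\infty(\Gamma)$). Both systems are equivalent to the BVP, and your derivation is in fact simpler.

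There is, however, a genuine gap in your ``compactly perturbed coercive'' step. With your sign conventions the $(1,1)$ block of $A_0=\mathrm{diag}(2T,|_\Gamma)$ has principal part $2T_0$, whose sesquilinear form is \emph{negative} real (recall $\langle -T_0\phi,\phi\rangle\geq c\|\phi\|^2$), while the $(2,2)$ block $|_\Gamma$ is positive real. The two diagonal contributions can therefore cancel: for suitable $(\phi,\psi)$ one gets $\langle 2T_0\phi,\phi\rangle+\|\psi\|^2_{L^2}=0$, so $\mathrm{diag}(2T_0,|_\Gamma)$ is \emph{not} coercive in the sense used here, and the claimed splitting does not exhibit $A$ as coercive-plus-compact. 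It is not true in general that a block-diagonal operator with each block compactly-perturbed-coercive is itself compactly-perturbed-coercive; one needs the coercive principal parts to have compatible sign.

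For Theorem~\ref{Thm1} this is easily repaired: $\mathrm{diag}(2T,|_\Gamma)$ is \emph{invertible} (since $T$ is invertible on $\tH^{1/2}(\Gamma)$), so your $A$ is a compact perturbation of an invertible operator, hence Fredholm of index zero, and injectivity plus Riesz--Fredholm gives well-posedness. But the stronger ``compactly perturbed coercive'' property---which is what the paper actually needs later for the Mosco/BEM convergence analysis---does not follow from your $A$ as written. The trivial fix is to negate your first equation, giving principal part $\mathrm{diag}(-2T_0,|_\Gamma)$, both blocks now positive-definite real; this is precisely why the paper arranges its BIE so that $-T$ (not $+T$) appears in the $(1,1)$ entry.
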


To prove Theorem \ref{Thm1} we follow the same classical argument as in \cite{ben2013application}: we first prove BVP uniqueness (Theorem \ref{Thm2}), then BVP-BIE equivalence (Theorem \ref{BVPBIEequiv}), then BIE existence via Riesz-Fredholm theory (proving that the boundary integral operator is Fredholm of index zero and that it is injective, as a consequence of BVP uniqueness and BVP-BIE equivalence). However, the details of each stage differ from \cite{ben2013application} because the arguments in \cite{ben2013application} apply only to sufficiently smooth $\Gamma$ and $\lambda^\pm$ (see Remark \ref{rem:Comparison}). 

Before embarking on the proof of Theorem \ref{Thm1} we pause to reassure the reader that when $\Gamma$ is sufficiently smooth the additional conditions \eqref{BVP3} and \eqref{BVP4} are redundant, in the sense that the BVP is well-posed without them. 

\begin{thm}
	\label{Thm2}
	If $\widetilde{H}^{1/2}(\Gamma)=H^{1/2}_{\overline\Gamma}$ then condition \eqref{BVP3} is redundant. 
	If $H^{-1/2}_{\partial \Gamma}=\{0\}$ then condition \eqref{BVP4} is redundant. In particular, if $\Gamma$ is Lipschitz except at a countable set of points with finitely many limit points, then both conditions are redundant and the BVP \eqref{eq:HE}-\eqref{eq:BC} is well-posed.
\end{thm}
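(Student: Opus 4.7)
The plan is to exploit the fact that any $u$ satisfying \eqref{eq:HE}--\eqref{eq:BC} automatically has jumps $[u]\in H^{1/2}_{\overline\Gamma}$ and $[\partial_n u]\in H^{-1/2}_{\overline\Gamma}$ by the trace theory recalled in \S\ref{sec:SobolevSpaces}, and then match these weaker inclusions against the stronger conditions \eqref{BVP3}--\eqref{BVP4} under the stated hypotheses. The whole argument amounts to a few short distribution-theoretic manipulations plus the invocation of known Sobolev-space identifications; the substantive content is concentrated in the last assertion.

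The first claim is essentially immediate: if $\tH^{1/2}(\Gamma)=H^{1/2}_{\overline\Gamma}$ then $[u]\in H^{1/2}_{\overline\Gamma}=\tH^{1/2}(\Gamma)$, so \eqref{BVP3} is redundant. For the second claim, I first upgrade the regularity of $[\partial_n u]$ using the boundary condition. Since $\Gamma$ is bounded, $H^{1/2}(\Gamma)\hookrightarrow L^2(\Gamma)=H^0(\Gamma)$, so $\gamma^\pm u|_\Gamma\in L^2(\Gamma)$; combined with $g^\pm\in L^2(\Gamma)$ and $\lambda^\pm\in L^\infty(\Gamma)$, \eqref{eq:BC} yields $\partial_n^\pm u|_\Gamma = g^\pm\mp\lambda^\pm\gamma^\pm u|_\Gamma\in L^2(\Gamma)$, hence $\phi:=[\partial_n u]|_\Gamma\in L^2(\Gamma)$. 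Writing $\tilde\phi$ for the zero extension of $\phi$ to $\Gamma_\infty$, we have $\tilde\phi\in\tH^0(\Gamma)\subset H^{-1/2}_{\overline\Gamma}$, and therefore $[\partial_n u]-\tilde\phi\in H^{-1/2}_{\overline\Gamma}$ has vanishing restriction to $\Gamma$, i.e.\ it is supported in $\partial\Gamma$, so it lies in $H^{-1/2}_{\partial\Gamma}$. Under the hypothesis $H^{-1/2}_{\partial\Gamma}=\{0\}$ this forces $[\partial_n u]=\tilde\phi\in\tH^0(\Gamma)$, giving \eqref{BVP4}.

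For the final assertion I would reduce to the two previous hypotheses and invoke results from \cite{ChaHewMoi:13}. Specifically, when $\Gamma$ is Lipschitz except at a countable set of points with finitely many limit points, the identity $\tH^{1/2}(\Gamma)=H^{1/2}_{\overline\Gamma}$ holds because the exceptional set has vanishing $H^{1/2}$-capacity in $\Gamma_\infty$, so that the standard Lipschitz case can be extended through approximation; likewise $H^{-1/2}_{\partial\Gamma}=\{0\}$ reduces to a capacity estimate on an $(n-2)$-dimensional Lipschitz subset of $\R^{n-1}$ plus a negligible exceptional set. Once both hypotheses are established, Theorem \ref{Thm1} delivers well-posedness of the BVP \eqref{eq:HE}--\eqref{eq:BC} with the redundant conditions dropped.

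The routine first two parts follow cleanly from the zero-extension trick and the boundedness of multiplication by $\lambda^\pm$ on $L^2(\Gamma)$; the main obstacle I anticipate is pinning down the precise capacity/dimension criteria for the last step, since the quoted ``Lipschitz except at a countable set with finitely many limit points'' condition is weaker than fully Lipschitz and requires carefully citing the correct identifications from \cite{ChaHewMoi:13} rather than re-deriving them.
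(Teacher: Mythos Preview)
Your proposal is correct and follows essentially the same approach as the paper's proof: the first claim is immediate from $[u]\in H^{1/2}_{\overline\Gamma}$; the second uses \eqref{eq:BC} to place $[\partial_n u]|_\Gamma$ in $L^2(\Gamma)$, takes the zero extension, and observes that the difference $[\partial_n u]-\tilde\phi$ lies in $H^{-1/2}_{\partial\Gamma}$ (the paper phrases this equivalently as injectivity of $|_\Gamma:H^{-1/2}_{\overline\Gamma}\to H^{-1/2}(\Gamma)$). For the final assertion the paper simply cites \cite[Theorem~3.24 and Lemma~3.10(xi)]{ChaHewMoi:13} for the two Sobolev-space identities; your anticipated ``obstacle'' of locating the precise references is thus resolved there, and no capacity argument needs to be spelled out.
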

\begin{proof}
	By the assumption that $u\in C^2(D)$ we know that $[u]  \in H^{1/2}_{\overline\Gamma}$ and $[\partial_n u]  \in H^{-1/2}_{\overline\Gamma}$. Hence the first statement is obvious. For the second statement, from \eqref{eq:BC} we have that $[\partial_n u]|_\Gamma=\partial^+_n u|_\Gamma-\partial^-_n u|_\Gamma= -(\lambda^+ \gamma^+ u|_\Gamma + \lambda^- \gamma^- u|_\Gamma)+(g^+-g^-)\in H^{0}(\Gamma)$. This means the zero extension $\widetilde{[\partial_n u]|_\Gamma}$ is well-defined as an element of $\widetilde{H}^0(\Gamma)\subset \widetilde{H}^{-1/2}(\Gamma)\subset H^{-1/2}_{\overline\Gamma}$. But the assumption that $H^{-1/2}_{\partial \Gamma}=\{0\}$ is equivalent to the restriction operator $|_\Gamma:H^{-1/2}_{\overline\Gamma}\to H^{-1/2}(\Gamma)$ being injective, from which it follows that $[\partial_n u]=\widetilde{[\partial_n u]|_\Gamma}$, so that $[\partial_n u]\in \widetilde H^0(\Gamma)$. The final statement then follows from the first two statements combined with \cite[Theorem 3.24 and Lemma 3.10(xi)]{ChaHewMoi:13}. 
\end{proof}
\vspace{3mm}

We now prove BVP uniqueness for \eqref{eq:HE}-\eqref{BVP4}. In \cite{ben2013application} uniqueness is quoted without proof from \cite{kress2003integral}, where a proof is given in the case where $\lambda^+=\lambda^-$, $n=2$, and $\Gamma$ is a single smooth arc (a line segment in the flat case). The proof in \cite{kress2003integral} involves carefully cutting off the singular endpoint contributions. But we can prove uniqueness for general open $\Gamma$ in the general case where $\Im[\lambda^\pm]\geq 0$ using distributional calculus. The assumptions \eqref{BVP3} and \eqref{BVP4} are invoked to show that certain dual pairings can be replaced by $L^2(\Gamma)$ inner products. 

\begin{thm}
	\label{thm:BVPuniqueness}
	There is at most one solution to the BVP \eqref{eq:HE}-\eqref{BVP4}.
\end{thm}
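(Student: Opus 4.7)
By linearity it suffices to take $g^\pm\equiv 0$ and deduce $u\equiv 0$. The plan is a Rellich--Sommerfeld energy argument, adapted to the rough screen setting by having the regularity conditions \eqref{BVP3}--\eqref{BVP4} play the role usually filled by smoothness of $\partial\Gamma$. Fix $R$ large enough that $\overline\Gamma\subset B_R$. Since $u\in W^{1,\mathrm{loc}}(D)$ and each half-ball $B_R\cap U^\pm$ is a bounded Lipschitz subset of $D$, one has $u\in H^1(B_R\cap U^\pm)$ with $\Delta u=-k^2u\in L^2$, so Green's first identity applies on each. Adding the two identities, the $\Gamma_\infty\cap B_R$ contributions from $U^+$ and $U^-$ combine with opposite outward normals and cancel on $\Gamma_\infty\setminus\overline\Gamma$ (where $u$ is $C^2$ and the traces from the two sides agree), leaving
\begin{equation*}
\int_{B_R\cap D}\bigl(|\nabla u|^2-k^2|u|^2\bigr)\,dx \;=\; \int_{\partial B_R}(\partial_r u)\,\overline u\,ds \;+\; I_\Gamma,
\end{equation*}
where $I_\Gamma$ is a distributional pairing supported on $\overline\Gamma$, formally equal to $-\langle\{\partial_n u\},[u]\rangle-\langle[\partial_n u],\{u\}\rangle$ in terms of jumps and averages.

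The crucial step is the interpretation of $I_\Gamma$, where \eqref{BVP3}--\eqref{BVP4} enter decisively. A priori $[u]\in H^{1/2}_{\overline\Gamma}$ and $[\partial_n u]\in H^{-1/2}_{\overline\Gamma}$ could carry distributional mass on $\partial\Gamma$ that a Lebesgue integral over $\Gamma$ would miss. Under \eqref{BVP3}, $[u]\in\widetilde H^{1/2}(\Gamma)$, and under \eqref{BVP4}, $[\partial_n u]\in\widetilde H^0(\Gamma)\subset\widetilde H^{-1/2}(\Gamma)$, so each of the two pairings sits in a legitimate $H^{-1/2}(\Gamma)\times\widetilde H^{1/2}(\Gamma)$ or $\widetilde H^{-1/2}(\Gamma)\times H^{1/2}(\Gamma)$ duality. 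Moreover, using $\gamma^\pm u|_\Gamma\in H^{1/2}(\Gamma)\subset L^2(\Gamma)$, $\lambda^\pm\in L^\infty(\Gamma)$, and the impedance BC with $g^\pm=0$, one obtains $\partial_n^\pm u|_\Gamma=\mp\lambda^\pm\gamma^\pm u|_\Gamma\in L^2(\Gamma)$; all four factors then lie in $L^2(\Gamma)$, the dualities reduce to $L^2(\Gamma)$ inner products, and a short direct computation gives
\begin{equation*}
I_\Gamma \;=\; \int_\Gamma\bigl(\lambda^+|\gamma^+ u|^2+\lambda^-|\gamma^- u|^2\bigr)\,ds,
\qquad \operatorname{Im} I_\Gamma\ge 0.
\end{equation*}

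Since the left-hand side of Green's identity is real, it follows that $\operatorname{Im}\int_{\partial B_R}(\partial_r u)\,\overline u\,ds\le 0$. Expanding $|\partial_r u-iku|^2$ and using the Sommerfeld condition \eqref{eq:SRC} then yields $\int_{\partial B_R}|u|^2\,ds\to 0$ as $R\to\infty$. Rellich's lemma forces $u$ to vanish outside a sufficiently large ball, and analyticity of $u$ in the connected set $D$ extends this to $u\equiv 0$ in $D$ by unique continuation. The main obstacle is exactly the passage from a distributional pairing on $\overline\Gamma$ to an $L^2(\Gamma)$ integral: without \eqref{BVP3}--\eqref{BVP4}, concentrated contributions on $\partial\Gamma$ would be invisible to the $L^2$ substitution and could destroy the sign of $\operatorname{Im} I_\Gamma$, breaking uniqueness; the rest is the standard impedance-screen uniqueness machinery.
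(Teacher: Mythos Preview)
Your proof is correct and follows essentially the same approach as the paper's: Green's first identity on the two half-balls, rewriting the flat boundary contribution in terms of jumps so that conditions \eqref{BVP3}--\eqref{BVP4} allow the distributional pairings to collapse to $L^2(\Gamma)$ inner products, then applying the boundary condition to obtain the sign needed for Rellich's lemma and unique continuation. The only cosmetic differences are that the paper uses an explicit cutoff $\chi\in C_0^\infty(B_R)$ (with $\chi=1$ near $\overline\Gamma$) to handle the $W^{1,\mathrm{loc}}$ regularity and to localise the traces, and it rewrites the boundary terms by adding and subtracting a single cross term $\langle\partial_n^+\chi u,\gamma^-\chi u\rangle$ rather than via your jump/average decomposition $-\langle\{\partial_n u\},[u]\rangle-\langle[\partial_n u],\{u\}\rangle$; these are algebraically equivalent manoeuvres leading to the same $L^2(\Gamma)$ expression.
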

\begin{proof}
	Suppose that $u$ is a solution of the homogeneous BVP with $g^\pm=0$. From \eqref{eq:BC} we have that $\partial_n^\pm u|_\Gamma = \mp\lambda^\pm \gamma^\pm u|_\Gamma \in H^{0}(\Gamma)$. This means the zero extensions $\widetilde{\partial_n^\pm u|_\Gamma}$ are well-defined as elements of $\widetilde{H}^0(\Gamma)\subset \widetilde{H}^{-1/2}(\Gamma)\subset H^{-1/2}_{\overline\Gamma}$. 
	Let $B_R\subset \R^n $ denote an open ball of radius $R>0$ centered at the origin such that $\overline\Gamma\subset B_R$, and let $B_R^\pm=\{x\in B_R:\pm x_n>0\}$ denote the corresponding upper and lower open half-balls. 
	Let $\chi\in C^\infty_0(B_R)$, with $\chi=1$ in a neighbourhood of $\overline\Gamma$. Apply Green's first identity 
	in $B_R^\pm$ to the decomposition $u = \chi u + (1-\chi)u$ and sum the results to get that 
	\[ \int_{B_R^+\cup B_R^-} |\nabla u|^2 - k^2 |u|^2 = \int_{\partial B_R} \frac{\partial u}{\partial \nu} \overline{u} - \left \langle \partial_n^+\chi u,\gamma^+\chi u\right \rangle_{H^{-1/2}(\Gamma_\infty)\times H^{1/2}(\Gamma_\infty)} + \left \langle \partial_n^-\chi u,\gamma^-\chi u\right \rangle_{H^{-1/2}(\Gamma_\infty)\times H^{1/2}(\Gamma_\infty)},    \]
	where $\nu$ is the unit outward normal on $\partial B_R$.
	Since the LHS is real, taking conjugates and then imaginary parts gives
	\begin{align*}
	\label{}
	\Im\int_{\partial B_R} \overline{\frac{\partial u}{\partial \nu}}u 
	&= \Im\left[ - \left \langle \partial_n^+\chi u,\gamma^+\chi u\right \rangle_{H^{-1/2}(\Gamma_\infty)\times H^{1/2}(\Gamma_\infty)} + \left \langle \partial_n^-\chi u,\gamma^-\chi u\right \rangle_{H^{-1/2}(\Gamma_\infty)\times H^{1/2}(\Gamma_\infty)} \right].
	\end{align*}
	Adding and subtracting $\left \langle \partial_n^+\chi u,\gamma^-\chi u\right \rangle_{H^{-1/2}(\Gamma_\infty)\times H^{1/2}(\Gamma_\infty)}$ on the RHS, and invoking condition \eqref{BVP3} to justify the replacement of $\partial_n^+\chi u$ by $\widetilde{\partial_n^+ u|_\Gamma}$, gives
	\begin{align*}
	\label{}
	\Im\int_{\partial B_R} \overline{\frac{\partial u}{\partial \nu}}u 
	&= \Im\left[ - \left \langle \partial_n^+\chi u,[u]\right \rangle_{H^{-1/2}(\Gamma_\infty)\times H^{1/2}(\Gamma_\infty)} - \left \langle [\partial_n u],\gamma^-\chi u\right \rangle_{H^{-1/2}(\Gamma_\infty)\times H^{1/2}(\Gamma_\infty)} \right]\\
	&= \Im\left[ - \left \langle \widetilde{\partial_n^+ u|_\Gamma},[u]\right \rangle_{H^{-1/2}(\Gamma_\infty)\times H^{1/2}(\Gamma_\infty)} - \left \langle [\partial_n u],\gamma^-\chi u\right \rangle_{H^{-1/2}(\Gamma_\infty)\times H^{1/2}(\Gamma_\infty)} \right].
	\end{align*}
	All the arguments in the dual pairings are now $L^2$ functions, by the discussion about $\widetilde{\partial_n^+ u|_\Gamma}$ above, and by condition \eqref{BVP4}, which implies in particular that $[\partial_n u]=\widetilde{[\partial_n u]|_\Gamma}$. Hence, by applying the boundary condition \eqref{eq:BC}, we find that
	\begin{align*}
	\label{}
	\Im\int_{\partial B_R} \overline{\frac{\partial u}{\partial \nu}}u 
	&= \Im\left[ - \left(\widetilde{\partial_n^+ u|_\Gamma},[u]\right)_{L^2(\Gamma_\infty)} - \left(\widetilde{[\partial_n u]|_\Gamma},\gamma^-\chi u\right)_{L^2(\Gamma_\infty)} \right]\\
	&= \Im\left[ - \left(\partial_n^+ u|_\Gamma,[u]|_\Gamma\right)_{L^2(\Gamma)} - \left([\partial_n u]|_\Gamma,\gamma^- u|_\Gamma\right)_{L^2(\Gamma)} \right]\\
	&= \Im\left[ \left(\lambda^+\gamma^+ u|_\Gamma,\gamma^+u|_\Gamma\right)_{L^2(\Gamma)} + \left(\lambda^-\gamma^- u|_\Gamma,\gamma^- u|_\Gamma\right)_{L^2(\Gamma)} \right]\\
	&= \int_\Gamma \Im\left[ \lambda^+\right]\left|\gamma^+ u|_\Gamma\right|^2 + \int_\Gamma \Im\left[ \lambda^-\right]\left|\gamma^- u|_\Gamma\right|^2.
	\end{align*}
	Therefore $\Im\int_{\partial B_R} \overline{\frac{\partial u}{\partial \nu}}u \geq 0 $, by our assumption that $\Im\left[ \lambda^\pm\right]\geq 0$ a.e. on $\Gamma$, so by Rellich's Lemma \cite[Thm~2.13]{CoKr:98} we get $u=0$ in $\R^n\setminus \overline{B_R}$, and then by unique continuation \cite[Thm~8.6]{CoKr:98} we get $u=0$ in the whole of $D=\R^n\setminus\overline\Gamma$. 
\end{proof}


Next we establish BVP-BIE equivalence.

\begin{thm}
	\label{BVPBIEequiv}
	If $u$ solves the BVP \eqref{eq:HE}-\eqref{BVP4} then
	\begin{align}
	\label{eq:Rep}
	u = \cD \phi - \cS \psi,
	\end{align}
	where $\phi=[u]\in \widetilde{H}^{1/2}(\Gamma)$ and $\psi=[\partial_n u]\in \widetilde{H}^{0}(\Gamma)$ satisfy the 
	integral equation 
	\begin{align}
	\label{BIE}
	A \left(\begin{array}{c}{ \phi}\\ {\psi}\end{array}\right) = \left(\begin{array}{c} 
	-(g^++g^-)
	\\[3mm] g^+-g^-\end{array}\right), 
	\end{align}
	with $A:\widetilde{H}^{1/2}(\Gamma)\times \widetilde{H}^{0}(\Gamma) \to H^{-1/2}(\Gamma)\times H^0(\Gamma)$ given by the matrix of operators
	\begin{equation}\label{eq:A_def} 
	A  = \left(\begin{array}{cc} -\frac{1}{2}(\lambda^++\lambda^-)|_\Gamma - 2T & (\lambda^+-\lambda^-)S \\[4mm] \frac{1}{2}(\lambda^+-\lambda^-)|_\Gamma & |_\Gamma - (\lambda^+ +\lambda^-)S \end{array}\right).
	\end{equation}
(Here the notation $(\lambda^+\pm\lambda^-)|_\Gamma$ denotes restriction to $\Gamma$ followed by multiplication by $(\lambda^+\pm\lambda^-)$, i.e.\ $(\lambda^+\pm\lambda^-)|_\Gamma (\phi):=(\lambda^+\pm\lambda^-)\phi|_\Gamma$ for $\phi\in \tH^{1/2}(\Gamma)$.)

	Conversely, suppose that $\phi\in\widetilde{H}^{1/2}(\Gamma)$ and $\psi\in\widetilde{H}^{0}(\Gamma)$ satisfy \eqref{BIE}. Then $u$ defined by \eqref{eq:Rep} satisfies the BVP \eqref{eq:HE}-\eqref{BVP4}, and $[u]=\phi$ and $[\partial_n u]=\psi$.
\end{thm}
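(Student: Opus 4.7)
Given a BVP solution $u$, I would first establish the representation \eqref{eq:Rep} by a Green's second identity argument applied to $u$ and $\Phi(x,\cdot)$ in the truncated upper and lower half-balls $B_R^\pm$, letting $R\to\infty$. The Sommerfeld condition \eqref{eq:SRC} kills the contribution from $\partial B_R$, and the contributions from the portions of $\partial B_R^\pm$ lying in $\Gamma_\infty\setminus\overline\Gamma$ cancel when the two identities are summed, since $u$ is smooth across this set. What remains is $u=\cD[u]-\cS[\partial_n u]$ in $D$. Conditions \eqref{BVP3}--\eqref{BVP4} ensure that $\phi:=[u]\in \tH^{1/2}(\Gamma) \subset H^{1/2}_{\overline\Gamma}$ and $\psi:=[\partial_n u]\in \tH^0(\Gamma) \subset \tH^{-1/2}(\Gamma) \subset H^{-1/2}_{\overline\Gamma}$, placing the potentials in the correct operator-theoretic setting of \S\ref{sec:SobolevSpaces}.

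\textbf{Deriving \eqref{BIE}.} Since $\Gamma$ is flat, the trace and jump identities for the layer potentials simplify: on $\Gamma$ one has $\gamma^\pm\cD\phi=\pm\tfrac12\phi|_\Gamma$, $\partial_n^\pm\cD\phi=T\phi$, $\gamma^\pm\cS\psi=S\psi$, and $\partial_n^\pm\cS\psi=\mp\tfrac12\psi|_\Gamma$. Substituting into the impedance conditions \eqref{eq:BC} gives the pair
\begin{align*}
T\phi + \tfrac12\psi|_\Gamma + \lambda^+\bigl(\tfrac12\phi|_\Gamma - S\psi\bigr) &= g^+,\\
T\phi - \tfrac12\psi|_\Gamma + \lambda^-\bigl(\tfrac12\phi|_\Gamma + S\psi\bigr) &= g^-.
\end{align*}
Multiplying the first by $\lambda^-$, the second by $\lambda^+$, adding, and dividing by $-(\lambda^++\lambda^-)$ recovers the first row of \eqref{BIE}; subtracting the second from the first recovers the second row. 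The algebraic arrangement in \eqref{eq:A_def} is engineered so that every $\lambda$-dependent coefficient acts on $\phi|_\Gamma$ or $\psi|_\Gamma$ as an $L^\infty(\Gamma)$ multiplier on $H^0(\Gamma)$, circumventing the issue (flagged in Remark \ref{rem:Comparison}) that generic $L^\infty$ functions are not multipliers on $H^{\pm 1/2}$.

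\textbf{Reverse direction.} Given $\phi\in\tH^{1/2}(\Gamma)$ and $\psi\in\tH^0(\Gamma)$ solving \eqref{BIE}, define $u:=\cD\phi-\cS\psi$. The mapping properties of $\cS,\cD$ stated at the end of \S\ref{sec:SobolevSpaces} give $u\in C^2(D)\cap W^{1,\mathrm{loc}}(D)$, and $u$ inherits the Helmholtz equation and the Sommerfeld condition from the fundamental solution $\Phi$. The distributional jump relations $[\cD\phi]=\phi$, $[\cS\psi]=0$, $[\partial_n\cD\phi]=0$, $[\partial_n\cS\psi]=-\psi$ on $\Gamma_\infty$ (valid since $\phi\in H^{1/2}_{\overline\Gamma}$ and $\psi\in H^{-1/2}_{\overline\Gamma}$) immediately yield $[u]=\phi\in\tH^{1/2}(\Gamma)$ and $[\partial_n u]=\psi\in\tH^0(\Gamma)$, so that \eqref{BVP3}--\eqref{BVP4} hold. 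To recover the impedance boundary condition, I invert the linear combinations used in the forward direction: the first row of \eqref{BIE} multiplied by $-(\lambda^++\lambda^-)$ (which is legitimate because $1/(\lambda^++\lambda^-)\in L^\infty(\Gamma)$) combined with the second row is equivalent to the pair $\partial_n^\pm u|_\Gamma \pm\lambda^\pm\gamma^\pm u|_\Gamma = g^\pm$.

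\textbf{Main obstacle.} The delicate step is making the Green's representation rigorous when $\partial\Gamma$ is highly irregular: one cannot apply Green's identity on $D$ directly because $\partial D\supset\partial\Gamma$ may fail to admit classical traces. The remedy is to work instead on the smooth half-balls $B_R^\pm$, pre-multiplying $u$ by a cutoff $\chi\in C_0^\infty(\R^n)$ with $\chi=1$ near $\overline\Gamma$ (as in the proof of Theorem \ref{thm:BVPuniqueness}), so that all the boundary contributions on $\Gamma_\infty$ are interpreted as distributional dual pairings in $H^{\pm 1/2}(\Gamma_\infty)\times H^{\mp 1/2}(\Gamma_\infty)$. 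Conditions \eqref{BVP3}--\eqref{BVP4} are precisely what allow the jump distributions $[u]\in H^{1/2}_{\overline\Gamma}$ and $[\partial_n u]\in H^{-1/2}_{\overline\Gamma}$ to be identified with elements of the smaller tilde spaces $\tH^{1/2}(\Gamma)$ and $\tH^0(\Gamma)$ on which the operator $A$ of \eqref{eq:A_def} is posed; without them, the representation formula would not map into the domain of $A$.
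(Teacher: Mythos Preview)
Your approach is essentially the same as the paper's: derive the representation formula from a Green's identity/representation theorem (the paper simply cites a screen version from the literature rather than sketching the half-ball argument), take traces using the flat-screen jump relations, and then perform the same linear combinations you describe to pass between the pair of impedance conditions and the two rows of \eqref{BIE}.

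One regularity point deserves to be made explicit. When you write the pair
\[
T\phi \pm \tfrac12\psi|_\Gamma \pm \lambda^\pm(\cdots) = g^\pm
\]
and then multiply by $\lambda^\mp$, you are multiplying an equation that contains $T\phi$ by an $L^\infty(\Gamma)$ function. A priori $T\phi\in H^{-1/2}(\Gamma)$ only, and $L^\infty$ functions are \emph{not} multipliers there---precisely the pitfall you flag from Remark~\ref{rem:Comparison}. The paper handles this by first observing, from the boundary condition \eqref{eq:BC}, that $\partial_n^\pm u|_\Gamma = g^\pm \mp \lambda^\pm\gamma^\pm u|_\Gamma \in H^0(\Gamma)$, and hence (since $\psi|_\Gamma\in H^0(\Gamma)$) that $T\phi\in H^0(\Gamma)$; only then are the multiplications by $\lambda^\pm$ legitimate. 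In the converse direction the same check is needed: the paper reads $T\phi\in H^0(\Gamma)$ off the first row of \eqref{BIE} before recombining to recover \eqref{eq:BC}. Once you insert this observation, your argument matches the paper's.
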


\begin{proof}
	We note first that if $u$ is any element of $C^2(D)\cap W^{1,{\rm loc}}(D)$ satisfying \eqref{eq:Rep} for some 
	$\phi\in H^{1/2}_{\overline \Gamma}$ and $\psi\in H^{-1/2}_{\overline \Gamma}$ then multiplying \eqref{eq:Rep} by a cut-off function $\chi\in C^\infty_0(\R^n)$, with $\chi=1$ in a neighbourhood of $\overline\Gamma$, 
	and taking Dirichlet and Neumann traces onto $\Gamma_\infty$, gives
	\begin{align}
	\label{Rep0}
	\gamma^\pm \chi u &= \pm \frac12 \phi - \gamma^\pm \chi \cS \psi,\\
	\label{Rep1}
	\partial_n^\pm \chi u &= \partial_n^\pm \chi\cD\phi \pm \frac12 \psi,
	\end{align}
after application of the relations $\gamma^\pm \chi\cD\phi = \pm\frac12 \phi $ and $\partial_n^\pm \chi\cS\psi = \mp\frac12 \psi $ \cite[(28)-(29)]{ScreenPaper}. 
Then, restricting to $\Gamma$ and applying \eqref{BIOs}, we obtain
	\begin{alignat}{3}
	\label{eq:DirEq}
	\gamma^\pm u|_\Gamma &= \pm \frac12 \phi|_\Gamma- S \psi, && \qquad(\text{equality in }H^{1/2}(\Gamma))\\
	\label{eq:NeuEq}
	\partial_n^\pm u|_\Gamma &= T\phi \pm \frac12 \psi|_\Gamma. && \qquad(\text{equality in }H^{-1/2}(\Gamma))
	\end{alignat}
We shall use these relations in both directions of the proof.	
	
	Now suppose that $u$ satisfies the BVP \eqref{eq:HE}-\eqref{BVP4}. Then \eqref{eq:Rep} with $\phi=[u]$ and $\psi=[\partial_n u]$ follows from \eqref{eq:HE} and \eqref{eq:SRC} and Green's representation theorem for screens (see e.g.~\cite[Thm 3.2]{CoercScreen}), and the facts that $\phi\in \widetilde{H}^{1/2}(\Gamma)$ and $\psi\in \widetilde{H}^{0}(\Gamma)$ are precisely the conditions \eqref{BVP3} and \eqref{BVP4}. 
	The boundary condition \eqref{eq:BC} tells us that $\partial_n^\pm u|_\Gamma\in H^0(\Gamma)$, and hence, by \eqref{eq:NeuEq}, that $T\phi\in H^0(\Gamma)$, so that \eqref{eq:NeuEq} is actually an equality in $H^0(\Gamma)$. Then, combining \eqref{eq:DirEq} and \eqref{eq:NeuEq} with \eqref{eq:BC} we obtain
	\begin{alignat}{3}
	\label{Rep3a}
T\phi \pm \frac{1}{2}\psi|_\Gamma + \frac{1}{2}\lambda^\pm \phi|_\Gamma \mp \lambda^\pm S\psi=g^\pm,&& \qquad(\text{equality in }H^{0}(\Gamma))
	\end{alignat}
and taking first the sum and then the difference of the $+$ and $-$ versions of \eqref{Rep3a} produces \eqref{BIE}.
%
	
For the converse, suppose that $\phi\in\widetilde{H}^{1/2}(\Gamma)$ and $\psi\in\widetilde{H}^{0}(\Gamma)$ satisfy \eqref{BIE}. Then $u$ defined by \eqref{eq:Rep} lies in $C^2(D)\cap W^{1,{\rm loc}}(D)$ and satisfies \eqref{eq:HE} and \eqref{eq:SRC}. It also satisfies \eqref{Rep0}-\eqref{Rep1}, and then since $[\cS \psi]=[\partial_n \cD \phi]=0$ (by \cite[(27) and (30)]{ScreenPaper}) it follows that $[u]=\phi$ and $[\partial_n u]=\psi$, and hence that \eqref{BVP3} and \eqref{BVP4} hold. The boundary condition \eqref{eq:BC} follows by combining \eqref{BIE} with \eqref{eq:DirEq} and \eqref{eq:NeuEq}.

\end{proof}

To prove BIE existence, we first establish that the operator $A$ is compactly perturbed coercive. For this we note that the range space $H^{-1/2}(\Gamma)\times H^{0}(\Gamma)$ of $A$ provides a unitary realisation of the dual space of the domain space $\widetilde{H}^{1/2}(\Gamma)\times \widetilde{H}^{0}(\Gamma)$ of $A$ via the dual pairing
\begin{align}
\label{eq:Dual}
\left\langle \left(\begin{array}{c}{u}\\ {u'}\end{array}\right), \left(\begin{array}{c}{ v}\\ v'\end{array}\right)\right\rangle = \left\langle u,v\right\rangle_{H^{-1/2}(\Gamma)\times \widetilde{H}^{1/2}(\Gamma)} + \left\langle u',v'\right\rangle_{H^{0}(\Gamma)\times \widetilde{H}^{0}(\Gamma)}, 
\end{align}
where the second pairing on the RHS coincides with the $L^2$ inner product $(u',v'|_\Gamma)_{L^2(\Gamma)}$.

\begin{thm}
\label{thm:CoercCompact}
The operator $A:\widetilde{H}^{1/2}(\Gamma)\times \widetilde{H}^{0}(\Gamma) \to H^{-1/2}(\Gamma)\times H^0(\Gamma)$ defined in \eqref{eq:A_def} is compactly perturbed coercive (and hence Fredholm of index zero) with respect to the dual pairing $\langle\cdot,\cdot\rangle$.
\end{thm}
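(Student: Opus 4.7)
The plan is to decompose $A = A_0 + A_1$, where
\begin{equation*}
A_0 = \begin{pmatrix} -T_0 & 0 \\ 0 & |_\Gamma \end{pmatrix},
\qquad A_1 = A - A_0,
\end{equation*}
with $T_0$ denoting the hypersingular boundary integral operator for the Laplace equation ($k=0$). I will show that $A_0$ is coercive with respect to the dual pairing $\langle\cdot,\cdot\rangle$ defined in \eqref{eq:Dual}, and that $A_1$ is compact; the Fredholm-index-zero conclusion then follows from standard Riesz--Schauder theory.

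For the coercivity of $A_0$, the two diagonal contributions decouple. The hypersingular operator at zero frequency is known to satisfy $\mathrm{Re}\,\langle -T_0 \phi,\phi\rangle_{H^{-1/2}(\Gamma)\times \tH^{1/2}(\Gamma)}\ge c\|\phi\|_{\tH^{1/2}(\Gamma)}^2$ for all $\phi\in\tH^{1/2}(\Gamma)$, with $c>0$, for arbitrary bounded open $\Gamma\subset\Gamma_\infty$; this is the cornerstone of the Neumann screen analysis in \cite{ScreenPaper} and it is where the choice of $\tH^{1/2}(\Gamma)$ as the natural space for $\phi$ is essential. For the lower-right, $|_\Gamma$ is the unitary isomorphism $\tH^0(\Gamma)\to H^0(\Gamma)$, so using \eqref{isdual} with $s=0$ we get $\langle \psi|_\Gamma,\psi\rangle_{H^0(\Gamma)\times\tH^0(\Gamma)}=\|\psi\|_{\tH^0(\Gamma)}^2$. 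Summing gives a lower bound on $\mathrm{Re}\,\langle A_0(\phi,\psi),(\phi,\psi)\rangle$ by $\min(c,1)\|(\phi,\psi)\|^2$.

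For the compactness of $A_1$, I treat each of its four entries separately, factoring each through at least one compact Sobolev embedding on a bounded set (all of which are available from \S\ref{sec:SobolevSpaces}). The correction $-(T-T_0):\tH^{1/2}(\Gamma)\to H^{-1/2}(\Gamma)$ is compact since the kernel $\Phi-\Phi_0$ is smoother than either $\Phi$ or $\Phi_0$ on a flat screen (a standard argument). The bottom-right correction $-(\lambda^++\lambda^-)S:\tH^0(\Gamma)\to H^0(\Gamma)$ factors as the bounded map $S:\tH^0(\Gamma)\to H^1(\Gamma)$ (the $s=0$ case of the mapping property stated after \eqref{BIOs}), the compact embedding $H^1(\Gamma)\hookrightarrow H^0(\Gamma)$ for bounded $\Gamma$, and the bounded $L^\infty$ multiplication on $H^0(\Gamma)$. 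Each off-diagonal entry involves multiplication by an $L^\infty$ coefficient composed with $|_\Gamma$: writing e.g.\ $-\frac{\lambda^+\lambda^-}{\lambda^++\lambda^-}|_\Gamma:\tH^{1/2}(\Gamma)\to H^{-1/2}(\Gamma)$ as the composition of the compact embedding $\tH^{1/2}(\Gamma)\hookrightarrow \tH^0(\Gamma)$, the unitary identification with $H^0(\Gamma)$, bounded multiplication, and the bounded inclusion $H^0(\Gamma)\hookrightarrow H^{-1/2}(\Gamma)$ (which itself factors through the compact $\tH^0(\Gamma)\hookrightarrow\tH^{-1/2}(\Gamma)$ followed by restriction), yields compactness. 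The other off-diagonal entry is treated identically.

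The main obstacle is verifying the two ingredients that depend on the geometry of $\partial\Gamma$: the coercivity of $-T_0$ on $\tH^{1/2}(\Gamma)$ and the compactness/mapping properties of $S$ and $T-T_0$ for potentially fractal $\Gamma$. Both rely crucially on working throughout in the $\tH^s$ scale (with the test functions having zero extension) and on the global mapping properties of $S,T$ on $H^s_{\overline\Gamma}$ stated in \S\ref{sec:SobolevSpaces}, together with the Rellich-type compactness of $H^t_K\hookrightarrow H^s_K$ for $t>s$ and compact $K$, which is insensitive to the regularity of $\partial\Gamma$. Once coercivity of $A_0$ and compactness of $A_1$ are established, the fact that $H^{-1/2}(\Gamma)\times H^0(\Gamma)$ provides a unitary realisation of the dual of $\tH^{1/2}(\Gamma)\times\tH^0(\Gamma)$ via \eqref{eq:Dual} ensures the conclusion of compactly perturbed coercivity, and the Fredholm-index-zero property is immediate.
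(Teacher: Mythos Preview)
Your proposal is correct and follows essentially the same approach as the paper: the identical decomposition $A=A_0+A_1$ with $A_0=\mathrm{diag}(-T_0,\,|_\Gamma)$, the same coercivity argument for $A_0$, and the same entry-by-entry factoring through compact Sobolev embeddings for $A_1$. One small slip: the term $-\tfrac{\lambda^+\lambda^-}{\lambda^++\lambda^-}|_\Gamma$ you call ``off-diagonal'' is actually part of the $(1,1)$ entry of $A_1$, but your compactness argument for it is correct and the genuine off-diagonal entries are handled by the same mechanism.
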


\begin{proof}
	We decompose $A$ as
	\[A = A_0 + A_1, \] 
	where 
	\[ A_0 = \left(\begin{array}{cc}-2T_0 & 0 \\ 0 & |_\Gamma  \end{array}\right),
	\qquad
	A_1  = \left(\begin{array}{cc}  -\frac{1}{2}(\lambda^++\lambda^-)|_\Gamma - 2(T-T_0) & (\lambda^+-\lambda^-)S \\[3mm] \frac{1}{2}(\lambda^+-\lambda^-)|_\Gamma &  - (\lambda^+ +\lambda^-)S \end{array}\right),\]
with $T_0:\widetilde{H}^{1/2}(\Gamma)\to H^{-1/2}(\Gamma)$ denoting the hypersingular boundary integral operator for the Laplace equation ($k=0$). 
	
	To prove that $A_0$ is coercive, we recall 
that $T_0$ is coercive: 
	there exists $c>0$ such that
	\[\Re\left[\left\langle -T_0 \phi,\phi\right\rangle_{{H^{-1/2}(\Gamma)\times \widetilde{H}^{1/2}(\Gamma)}}\right]\geq c \|\phi\|_{\widetilde H^{1/2}(\Gamma)}^2, \qquad \phi\in \widetilde{H}^{1/2}(\Gamma).\]
	This result is well-known even for curved screens (see e.g.\ \cite[p.216]{ben2013application}), and for planar screens can be proved using the Fourier transform representation 
	\[\left\langle -T_0 \phi,\phi\right\rangle_{{H^{-1/2}(\Gamma)\times \widetilde{H}^{1/2}(\Gamma)}} = \frac{1}{2}\int_{\R^{n-1}}|\xi|\,|\widehat{\phi}(\xi)|^2\,\rd\xi, \qquad \phi\in \widetilde{H}^{1/2}(\Gamma),\]
which can be obtained as the $k\to 0$ limit of the corresponding formula for the $k>0$ case presented in \cite[(37)]{CoercScreen2}. 
(This also reveals that $\left\langle -T_0 \phi,\phi\right\rangle_{{H^{-1/2}(\Gamma)\times \widetilde{H}^{1/2}(\Gamma)}}>0$ in the planar case.)
	Together, these observations imply that
	\begin{align*}
	\label{}
\Re\left[\left\langle A_0 \left(\begin{array}{c}{\phi}\\ {\psi}\end{array}\right),\left(\begin{array}{c}{\phi}\\ {\psi}\end{array}\right)\right\rangle \right]
		&= \Re\left[\left\langle -2T_0 \phi,\phi\right\rangle_{H^{-1/2}(\Gamma)\times \widetilde{H}^{1/2}(\Gamma)}\right] + \left\langle \psi|_\Gamma,\psi\right\rangle_{H^{0}(\Gamma)\times \widetilde{H}^{0}(\Gamma)}
	\\
	&\geq \min\left\{2c,1\right\}\left(\|\phi\|_{\widetilde{H}^{1/2}(\Gamma)}^2 + \|\psi\|_{\widetilde H^{0}(\Gamma)}^2\right),
	\end{align*}
	so $A_0$ is coercive, as claimed.
	
	To show $A_1$ is compact it suffices to show that each operator appearing in $A_1$ is compact between the appropriate pair of spaces. In each case this follows by expressing the operator in question as a composition of bounded operators and a compact embedding. The requisite facts are that 
$|_\Gamma:\widetilde H^{s}(\Gamma)\to H^{s}(\Gamma)$,   $S:\widetilde H^{s}(\Gamma)\to H^{s+1}(\Gamma)$ and $T-T_0:\widetilde H^{s}(\Gamma)\to H^{s+1}(\Gamma)$ are all bounded for $s\in\R$, multiplication by an $L^\infty(\Gamma)$ function is bounded on $H^0(\Gamma)$, and that $H^s(\Gamma)\subset H^{t}(\Gamma)$ is a compact embedding for $s>t$ (see \S\ref{sec:SobolevSpaces}). The claimed boundedness of $T-T_0$ and $S$ between the stated spaces for arbitrary bounded open $\Gamma$ follows from the corresponding well-known statements for the smooth case, 
since if $\Gamma_\dag$ is a bounded open set with smooth boundary (e.g.~a ball) containing $\Gamma$ and $R:\tilde{H}^s(\Gamma_\dag)\to H^t(\Gamma_\dag)$ is bounded for some $s,t\in\R$ then $|_\Gamma\circ R:\tilde{H}^s(\Gamma)\subset\tilde{H}^s(\Gamma_\dag)\to H^t(\Gamma)$ is also bounded. For the flat screens we consider, these boundedness results can also be proved directly using Fourier representations of the operators $T-T_0$ and $S$. The case of $S$ is covered by \cite[Thm~1.6]{CoercScreen2}, and for the case of $T-T_0$ we refer to \cite[Proof of Thm~1.8]{CoercScreen2}, noting that, where $Z(\xi)=\sqrt{k^2-|\xi|^2}$ for $|\xi|\leq k$ and $Z(\xi)=\ri\sqrt{|\xi|^2-k^2}$ for $|\xi|> k$ (cf. \cite[Eqn~(33)]{CoercScreen2}), the underlying Fourier integral operator associated with $T-T_0$ has symbol $\frac{\ri}{2}(Z(\xi)-\ri|\xi|)$, which is $O(1/|\xi|)$ as $|\xi|\to \infty$. 
		%
	%
\end{proof}

We can now deduce the invertibility of $A$, and hence BIE existence.

\begin{thm}
\label{thm:BIEwellposed}
The operator $A:\widetilde{H}^{1/2}(\Gamma)\times \widetilde{H}^{0}(\Gamma) \to H^{-1/2}(\Gamma)\times H^0(\Gamma)$ defined in \eqref{eq:A_def} is invertible. 
\end{thm}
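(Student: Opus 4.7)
The plan is to combine the three preceding theorems via the standard Riesz--Fredholm reduction. By Theorem \ref{thm:CoercCompact} we already know that $A$ is compactly perturbed coercive, hence Fredholm of index zero, so invertibility follows as soon as we establish injectivity. Thus the only task is to show that if $A(\phi,\psi)^T = (0,0)^T$ with $\phi\in\widetilde{H}^{1/2}(\Gamma)$ and $\psi\in\widetilde{H}^{0}(\Gamma)$, then $\phi=0$ and $\psi=0$.

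To prove injectivity I would invoke the converse half of Theorem \ref{BVPBIEequiv}: given such a pair $(\phi,\psi)$ in the kernel of $A$, define $u := \mathcal{D}\phi - \mathcal{S}\psi$. Since $(\phi,\psi)$ solves the BIE \rf{BIE} with right-hand side $g^+=g^-=0$, Theorem \ref{BVPBIEequiv} tells us that $u\in C^2(D)\cap W^{1,\mathrm{loc}}(D)$ satisfies the homogeneous BVP \rf{eq:HE}--\rf{BVP4}, and moreover that $[u]=\phi$ and $[\partial_n u]=\psi$.

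Next I would apply the uniqueness statement, Theorem \ref{thm:BVPuniqueness}, which guarantees that $u \equiv 0$ throughout $D=\R^n\setminus\overline{\Gamma}$. Taking jumps across $\Gamma_\infty$ then yields $\phi = [u] = 0$ in $\widetilde{H}^{1/2}(\Gamma)$ and $\psi = [\partial_n u] = 0$ in $\widetilde{H}^{0}(\Gamma)$, so $A$ is injective. Combined with the Fredholm-of-index-zero property established in Theorem \ref{thm:CoercCompact}, this gives invertibility.

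I do not expect any real obstacle here, since all the hard work has already been done: the coercivity decomposition was set up so that the range and domain spaces match in duality, Theorem \ref{BVPBIEequiv} was stated in both directions precisely to make this reduction painless, and Theorem \ref{thm:BVPuniqueness} was proved in the general non-smooth setting via distributional calculus. The only subtlety worth flagging is that the converse direction of Theorem \ref{BVPBIEequiv} requires $\phi\in\widetilde{H}^{1/2}(\Gamma)$ and $\psi\in\widetilde{H}^{0}(\Gamma)$ (rather than merely in $H^{1/2}_{\overline{\Gamma}}$ and $H^{-1/2}_{\overline{\Gamma}}$), so that the auxiliary conditions \rf{BVP3}--\rf{BVP4} are automatically satisfied by the potential $u=\mathcal{D}\phi-\mathcal{S}\psi$; this is exactly why the domain of $A$ was chosen to be $\widetilde{H}^{1/2}(\Gamma)\times\widetilde{H}^{0}(\Gamma)$ in the first place.
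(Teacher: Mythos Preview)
Your proposal is correct and follows essentially the same route as the paper: Fredholm of index zero from Theorem~\ref{thm:CoercCompact}, then injectivity via the converse direction of Theorem~\ref{BVPBIEequiv} combined with BVP uniqueness, yielding invertibility. Indeed, your write-up is more explicit than the paper's one-line proof (and arguably more accurate in its cross-referencing, since the paper cites Theorem~\ref{Thm2} for uniqueness where Theorem~\ref{thm:BVPuniqueness} is what is actually needed).
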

\begin{proof}
The integral operator is injective (as a consequence of Theorems \ref{BVPBIEequiv} and \ref{Thm2}) and Fredholm of index zero (by Theorem \ref{thm:CoercCompact}), hence also surjective.
\end{proof}

The proof of Theorem \ref{Thm1} then follows in the standard way.

\begin{proof}[Proof of Theorem \ref{Thm1}] 
Uniqueness of the BVP solution was proved in Theorem \ref{Thm2}. Existence of the BVP solution follows from Theorems \ref{BVPBIEequiv} and \ref{thm:BIEwellposed}.
\end{proof}

In the following remark we compare our results with those of \cite{ben2013application}. 

\begin{rem}
\label{rem:Comparison}
In \cite{ben2013application} the authors study the BVP \eqref{eq:HE}-\eqref{eq:BC} and the associated BIE 
	\begin{align}
	\label{BIE_Haddar}
	\widetilde{A} \left(\begin{array}{c}{ \phi}\\ {\psi}\end{array}\right) = \left(\begin{array}{c} \lambda^-g^++\lambda^+g^-\\[3mm] g^+-g^-\end{array}\right), 
	\end{align}
where 
	\begin{equation}\label{eq:A_def_Haddar} 
	\widetilde{A}  = \left(\begin{array}{cc} \lambda^+\lambda^-|_\Gamma + (\lambda^++\lambda^-)T & -\frac{1}{2}(\lambda^+-\lambda^-)|_\Gamma \\[4mm] \frac{1}{2}(\lambda^+-\lambda^-)|_\Gamma & |_\Gamma - (\lambda^+ +\lambda^-)S \end{array}\right),
	\end{equation}
which can be obtained formally from our BIE \eqref{BIE} by replacing the first row in \eqref{BIE} by $-(\lambda^++\lambda^-)/2$ times the first row plus $-(\lambda^+-\lambda^-)/2$ times the second row. 
One attraction of \eqref{BIE_Haddar} compared to \eqref{BIE} is that it contains one fewer instance of the operator $S$, which might make numerical implementation slightly cheaper. However, the presence of the factor $(\lambda^++\lambda^-)$ premultiplying $T$ in the $(1,1)$ entry of $\widetilde{A}$ introduces a spurious ill-posedness in the case $\lambda^++\lambda^-=0$, leading the authors of \cite{ben2013application} to require an additional assumption on $\lambda^\pm$ in their BVP well-posedness analysis \cite[\S2]{ben2013application}, namely that $(\lambda^++\lambda^-)^{-1}\in L^\infty(\Gamma)$. Our results (cf.\ Theorem \ref{Thm1}) show that this assumption is unnecessary for BVP well-posedness, and is associated purely with the choice of BIE made in \cite{ben2013application}.

The analysis in \cite{ben2013application} has further deficiencies, which we remark on here for completeness. In \cite{ben2013application} it is claimed that $\widetilde{A}$ is bounded as a mapping 
$\widetilde{A}:\widetilde{H}^{1/2}(\Gamma)\times \widetilde{H}^{-1/2}(\Gamma) \to H^{-1/2}(\Gamma)\times \widetilde{H}^{-1/2}(\Gamma)|_\Gamma$, presumably with $\widetilde{H}^{-1/2}(\Gamma)|_\Gamma$ normed by the norm inherited from $\widetilde{H}^{-1/2}(\Gamma)$ (although this is not explicitly stated in \cite{ben2013application}), and that the BVP \eqref{eq:HE}-\eqref{eq:BC} and BIE \eqref{BIE_Haddar} are well posed under the assumption that $\lambda^\pm\in L^\infty(\Gamma)$ with $\Im[\lambda^\pm]\geq 0$ and $(\lambda^++\lambda^-)^{-1}\in L^\infty(\Gamma)$, and $g^\pm\in H^{-1/2}(\Gamma)$ with $g^+-g^-\in \tH^{-1/2}(\Gamma)|_\Gamma$. 
However, we believe the justification of these claims in \cite{ben2013application} to be incomplete, even for smooth (e.g.\ Lipschitz) $\Gamma$. 
This is because the analysis in \cite{ben2013application} appears to rely on the incorrect assertion (see \cite[p215]{ben2013application}) that $L^\infty(\Gamma)$ functions are multipliers on the space $H^{-1/2}(\Gamma)$, which is false in general.\footnote{For an illustrative example in the context of $\R^m$ let $u\in C^\infty_0(\R^m)$ and let $\phi$ be the characteristic function of $\{x\in\R^m:x_m>0\}$. Then $\phi\in L^\infty(\R^m)$ and $u\in H^{1/2}(\R^m)$ but $\phi u\not\in H^{1/2}(\R^m)$ since $H^{1/2}(\R^m)$ functions cannot have jump discontinuities across $(m-1)$-dimensional manifolds. Thus $\phi$ is not a multiplier on $H^{1/2}(\R^m)$, and hence, by duality, not on $H^{-1/2}(\R^m)$ either.}
This means that, in particular, the operator $\widetilde{A}$ does not have the claimed mapping properties, and the first entry in the right-hand side of \eqref{BIE_Haddar} is not in $H^{-1/2}(\Gamma)$ (as claimed in \cite{ben2013application}) for general $g^\pm\in H^{-1/2}(\Gamma)$.

Provided that $\lambda^\pm$ and $(\lambda^++\lambda^-)^{-1}$ are sufficiently regular to be multipliers on $H^{\pm 1/2}(\Gamma)$, which holds e.g.\ when they are elements of $W^{1,\infty}(\Gamma)$ (cf.\ \cite[Theorem 3.20]{McLean}), the well-posedness analysis of \cite{ben2013application} appears to be valid, albeit under an implicit regularity assumption on $\Gamma$. No explicit regularity assumption on $\Gamma$ is stated in \cite{ben2013application}, but 
to see where the analysis breaks down for non-smooth $\Gamma$, we note that in \cite{ben2013application} the fact that the operator $\widetilde{A}$ is Fredholm of index zero is established by decomposing \[\widetilde{A} = \widetilde A_0 + \widetilde A_1, \] 
	where 
	\[ \widetilde A_0 = \left(\begin{array}{cc} (\lambda^+ +\lambda^-)T & -\frac{1}{2}(\lambda^+-\lambda^-)|_\Gamma \\ 0 & |_\Gamma  \end{array}\right),
	\qquad
	\widetilde A_1 = \left(\begin{array}{cc} \lambda^+\lambda^-|_\Gamma & 0 \\ \frac{1}{2}(\lambda^+-\lambda^-)|_\Gamma &  - (\lambda^+ +\lambda^-)S \end{array}\right),\]
	and showing that $\widetilde A_0$ is invertible and $\widetilde A_1$ is compact. 
A necessary condition for $\widetilde A_0$ to be invertible is that $|_\Gamma:\widetilde{H}^{-1/2}(\Gamma) \to H^{-1/2}(\Gamma)$ is injective. This is satisfied for smooth (e.g. Lipschitz) $\Gamma$, but fails in general. In particular, it fails when $\widetilde{H}^{-1/2}(\Gamma)=H^{-1/2}_{\overline\Gamma}$ but $H^{-1/2}_{\partial \Gamma}\neq\{0\}$, as is the case for the examples studied in \S\ref{sec:BEMegs}, including the classical Koch snowflake. 

In our analysis we work with an augmented BVP \eqref{eq:HE}-\eqref{BVP4} and a modified BIE \eqref{BIE} in a different function space setting, which requires us to assume extra regularity of the data $g^\pm$, namely that $g^\pm\in H^0(\Gamma)$. But this allows us to prove BVP and BIE well-posedness for general $\lambda^\pm\in L^\infty(\Gamma)$ (with $\Im[\lambda^\pm]\geq 0$ a.e.) and for arbitrary bounded open $\Gamma\subset\Gamma_\infty$. 
Furthermore, our extra assumption that $g^\pm\in H^0(\Gamma)$ is completely natural when it comes to scattering problems, since if the incident wave $u^i$ is a solution of the Helmholtz equation in a neighbourhood of $\Gamma$ then by elliptic regularity it is $C^\infty$ in a neighbourhood of $\Gamma$, which means that $g^\pm\in H^{0}(\Gamma)$ automatically. 


\end{rem}


\begin{rem}
\label{rem:Curved}
While our analysis is presented for flat (planar) screens, we expect that the extension to the curved case, where $\Gamma$ is a relatively open subset of the boundary of a Lipschitz (or possibly smoother) open set, should be possible in principle. Indeed, the only change to the boundary integral operator $A$ would be the addition of two extra off-diagonal terms $-2K'$ and $(\lambda^++\lambda^-)K$ in the $(1,2)$ and $(2,1)$ entries respectively (cf.\ the corresponding formula \cite[Equation (9)]{ben2013application} for the case of the operator $\widetilde{A}$ discussed in Remark \ref{rem:Comparison}), where $K$ and $K'$ are the double-layer and adjoint double-layer boundary integral operators, which are compact between the relevant spaces and so do not affect the analysis. But we leave this extension to future work.
\end{rem}



\section{Boundary element method}
\label{sec:BEM}
In this section we present and analyse a Galerkin BEM approach for the approximate solution of the integral equation \eqref{BIE} in the case where $\Gamma$ is a arbitrary bounded open subset of $\Gamma_\infty=\R^{n-1}\times\{0\}$. The case when $\partial\Gamma$ is regular (e.g.\ $\Gamma$ is a simple polygon) is classical \cite{ben2013application}. Here our focus is on the case when $\partial\Gamma$ is non-regular, even fractal. 

Our approach is to approximate the screen $\Gamma$, in a sense made clear in \eqref{MoscoPrefract} below, by a sequence of regular bounded open screens $\Gamma_j\subset\Gamma_\infty$, $j\in\N_0$, on which BEM approximation is possible with a suitable triangulation. When the limiting screen $\Gamma$ has a fractal boundary we call $\Gamma_j$ a \emph{prefractal} approximation. Our aim is to prove convergence of the BEM solution on $\Gamma_j$ to the true solution of the BIE \eqref{BIE} on $\Gamma$ in the joint limit as $j\to \infty$ with the BEM mesh width tending to zero. For convenience we assume that $\Gamma$ and $\Gamma_j$, $j\in\N_0$, are all contained in some larger bounded open screen $\Gamma^\dag\subset \Gamma_\infty$.

Theorem \ref{th:NA}
provides a general theoretical framework from which one can deduce such convergence results. We state the result in abstract terms since we expect it may be of wider interest beyond the current study. But the notation we use is chosen with Theorems \ref{thm:impedanceconv} and \ref{thm:impedanceNA} in mind, where we apply Theorem \ref{th:NA} to the specific function spaces and boundary integral operator relevant to the impedance screen problem. 



\begin{thm}\label{th:NA}
Let $V(\Gamma^\dag)$ be a Hilbert space and $V^*(\Gamma^\dag)$ a unitary realisation of its dual space. Let $V(\Gamma)$, $V(\Gamma_j)$, $V_h(\Gamma_j)$, $j\in \N_0$, be closed subspaces of $V(\Gamma^\dag)$ with $V_h(\Gamma_j)\subset V(\Gamma_j)$, $j\in \N_0$. Suppose that 
\begin{enumerate}[(i)]
\item\label{ATcoercive} $A:V(\Gamma^\dag) \to V^*(\Gamma^\dag)$ is compactly perturbed coercive; 
\item\label{Ainvertible} $A$ is invertible on $V(\Gamma)$; 
\item\label{Mosco_j} $V(\Gamma_j)\xrightarrow M V(\Gamma)$ as $j\rightarrow\infty$; (Mosco convergence)
\item\label{th:NA:ii} there exists a dense subspace $\tilde{W}\subset V(\Gamma)$ such that, for all $w\in \tilde{W}$,\[\inf_{v_h\in V_h(\Gamma_j)}\|w-v_h\|_{V(\Gamma^\dag)}\to0, \quad j\to\infty.\]
	\end{enumerate}
Then there exists $J\in\N$ such that for each $j\geq J$ and $f\in V^*(\Gamma^\dag)$ the problem: find $u^h_j\in V_h(\Gamma_j)$ such that
\begin{equation} \label{eq:varVhj}
\langle A u^h_j,v^h_j\rangle = \langle f,v^h_j\rangle_{V^*(\Gamma^\dag)\times V(\Gamma^\dag)}, \quad \mbox{for all } v^h_j\in V_h(\Gamma_j),
\end{equation}
is well-posed, and $\|u^h_j-u\|_{V(\Gamma^\dag)}\to 0$ as $j\to\infty$, where $u$ is the unique element of $V(\Gamma)$ satisfying
\begin{equation} \label{eq:varV}
\langle A u,v\rangle = \langle f,v\rangle_{V^*(\Gamma^\dag)\times V(\Gamma)}, \quad \mbox{for all } v\in V(\Gamma).
\end{equation}
\end{thm}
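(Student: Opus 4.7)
The plan is to reduce Theorem \ref{th:NA} to a single invocation of Lemma \ref{lem:dec3} with $H=V(\Gamma^\dag)$, $\cH=V^*(\Gamma^\dag)$, $W=V(\Gamma)$ and $W_j=V_h(\Gamma_j)$. Hypotheses \ref{ATcoercive} and \ref{Ainvertible} are exactly the standing assumptions of that lemma on the operator $A$, so the only non-trivial preparatory step is to establish the Mosco convergence $V_h(\Gamma_j)\xrightarrow{M}V(\Gamma)$. Once this is in place, Lemma \ref{lem:dec3} immediately yields the existence of $J\in\N$ such that the Galerkin problem \eqref{eq:varVhj} is well-posed for $j\geq J$ together with the strong convergence $\|u_j^h-u\|_{V(\Gamma^\dag)}\to 0$.

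To verify property (i) of Definition \ref{def:mosco} for the sequence $V_h(\Gamma_j)$, fix $w\in V(\Gamma)$. By hypothesis \ref{th:NA:ii}, $\tilde W$ is dense in $V(\Gamma)$, so we may choose $\tilde w_m\in \tilde W$ with $\|\tilde w_m-w\|_{V(\Gamma^\dag)}\leq 1/m$. The approximation clause in hypothesis \ref{th:NA:ii} then furnishes, for each $m$, an index $J_m\in\N$ (which we may take strictly increasing) with the property that for every $j\geq J_m$ there exists $v_{m,j}\in V_h(\Gamma_j)$ satisfying $\|v_{m,j}-\tilde w_m\|_{V(\Gamma^\dag)}\leq 1/m$. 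Setting $w_j:=v_{m,j}$ whenever $J_m\leq j<J_{m+1}$ and $w_j:=0$ for $j<J_1$ produces by the triangle inequality a sequence with $w_j\in V_h(\Gamma_j)$ and $\|w_j-w\|_{V(\Gamma^\dag)}\leq 2/m$ for $j\geq J_m$, hence $w_j\to w$ in $V(\Gamma^\dag)$.

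Property (ii) of Definition \ref{def:mosco} for $V_h(\Gamma_j)$ is inherited directly from the Mosco convergence of the larger spaces $V(\Gamma_j)$. Indeed, if $(V_h(\Gamma_{j_m}))$ is a subsequence, $w_m\in V_h(\Gamma_{j_m})$ and $w_m\rightharpoonup w$ weakly in $V(\Gamma^\dag)$, then the inclusion $V_h(\Gamma_{j_m})\subset V(\Gamma_{j_m})$ means $(w_m)$ is also a weakly convergent sequence with $w_m\in V(\Gamma_{j_m})$; hypothesis \ref{Mosco_j} (condition (ii) of Definition \ref{def:mosco}) then gives $w\in V(\Gamma)$.

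With $V_h(\Gamma_j)\xrightarrow{M}V(\Gamma)$ confirmed, Lemma \ref{lem:dec3} delivers exactly the claimed well-posedness and convergence, noting that the variational problem \eqref{eq:varVhj} coincides with the invertibility of $P_{\cW_j}A|_{V_h(\Gamma_j)}$ under the dual-pairing identification used throughout \S\ref{sec:Mosco}. The only point demanding any care is the diagonal extraction producing the recovery sequence in property (i); the rest is a faithful translation of hypotheses \ref{ATcoercive}–\ref{th:NA:ii} into the hypotheses of Lemma \ref{lem:dec3}, so I do not anticipate any genuine obstacle.
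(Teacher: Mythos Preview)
Your proposal is correct and follows essentially the same approach as the paper: apply Lemma~\ref{lem:dec3} with $H=V(\Gamma^\dag)$, $W=V(\Gamma)$, $W_j=V_h(\Gamma_j)$, after first verifying that $V_h(\Gamma_j)\xrightarrow{M}V(\Gamma)$. The paper merely cites \cite[Lemma~2.4]{chandler-wilde2019} for this Mosco-convergence step, whereas you spell out the diagonal argument for condition~(i) and the inclusion argument for condition~(ii) explicitly; both are valid and amount to the same proof.
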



\begin{proof}
This follows from Lemma \ref{lem:dec3} with $H=V(\Gamma^\dag)$, $W=V(\Gamma)$ and $W_j=V_h(\Gamma_j)$; the fact that $V_h(\Gamma_j)\xrightarrow M V(\Gamma)$ follows from \eqref{Mosco_j}, \eqref{th:NA:ii} and \cite[Lemma~2.4]{chandler-wilde2019}.
\end{proof}




Before considering BEM convergence we first apply Theorem \ref{th:NA} to prove convergence of the exact BIE solutions on prefractals (i.e.\ with no discretization).
\begin{thm}[BIE convergence for the impedance screen problem]\label{thm:impedanceconv}
Let $\Gamma$ and $\Gamma_j$, $j\in \N_0$, be bounded open subsets of $\Gamma_\infty=\R^{n-1}\times\{0\}$, with $\Gamma\subset \Gamma^\dag$ and $\Gamma_j\subset \Gamma^\dag$, $j\in \N_0$, for some bounded open $\Gamma^\dag\subset\Gamma_\infty$. 
For $\Omega$ denoting any of $\Gamma$, $\Gamma^\dag$ or $\Gamma_j$, $j\in\N_0$, define $V(\Omega):=\tilde{H}^{1/2}(\Omega)\times \tilde{H}^0(\Omega)$ (recall that $\tilde{H}^0(\Omega)\cong L^2(\Omega)$). 
Let $A$ denote the operator introduced in Theorem \ref{BVPBIEequiv}, with $\Gamma$ replaced by $\Gamma^\dag$, and suppose that 
\begin{align}
\label{MoscoPrefract}
\tilde{H}^{1/2}(\Gamma_j)\xrightarrow M \tilde{H}^{1/2}(\Gamma) \text{ and }\tilde{H}^0(\Gamma_j)\xrightarrow M \tilde{H}^0(\Gamma) 
\text{ as }j\rightarrow\infty. \qquad \text{(Mosco convergence)}
\end{align}
Let $u_j$ denote the solution of \eqref{eq:varVhj} with $V_h(\Gamma_j)=V(\Gamma_j)$. 
Then 
$u_j$ converges to the solution $u$ of \eqref{eq:varV} as $j\to\infty$. 
\end{thm}
\begin{proof}
We check the conditions of Theorem \ref{th:NA}. Condition \eqref{ATcoercive} follows from Theorem \ref{thm:CoercCompact}, with $\Gamma$ replaced by $\Gamma^\dag$, and condition \eqref{Ainvertible} follows from Theorem \ref{thm:BIEwellposed}. 
Condition \eqref{Mosco_j} is immediate from \eqref{MoscoPrefract}, and condition \eqref{th:NA:ii}  is implied for $\tilde{W}=V(\Gamma)$ by \eqref{Mosco_j}.
\end{proof} 

We now state and prove our BEM convergence result.

\begin{thm}[BEM convergence for the impedance screen problem]\label{thm:impedanceNA}
Let $\Gamma$, $\Gamma_j$, $\Gamma_\dag$, $V(\Omega)$ and $A$ be as in Theorem \ref{thm:impedanceconv}, and assume that \eqref{MoscoPrefract} holds.

In the case $n=2$, assume that $\Gamma_j$ is a finite union of open intervals whose closures are mutually disjoint. In the case $n=3$, assume that $\Gamma_j$ is a finite disjoint union of simple open polygons whose boundaries intersect only at vertices (or not at all). Let $\cT_h(\Gamma_j)$ 
denote a triangulation of $\Gamma_j$ (in the sense of \cite[(3.3.11)]{BrSc:07}) by intervals (in the case $n=2$) or triangles (in the case $n=3$).
Define 
$V_h(\Gamma_j):=V_{h,0}^1(\Gamma_j)\times V_h^0(\Gamma_j)$, where $V_{h,0}^1(\Gamma_j)$ denotes the space of continuous piecewise linear functions on $\cT_h(\Gamma_j)$ which vanish on $\partial\Gamma_j$, and $V_{h}^0(\Gamma_j)$ denotes the space of piecewise constant functions on $\cT_h(\Gamma_j)$, both extended by zero to the whole of $\Gamma_\infty$.

Suppose that
\begin{enumerate}[(i)$\,'$]
\item\label{Jcond} for any compact $K\subset\Gamma$ there exists $J'\in\N$ such that $K\subset\Gamma_j$ for all $j\geq J'$;
\item\label{chunkiness} the meshes are uniformly nondegenerate, i.e.\ there exists $\sigma>0$ such that $\sigma(T)\leq \sigma$ for all $T\in \cT_h(\Gamma_j)$ and $j\in \N_0$, where $\sigma(T)$ is the chunkiness parameter in \cite[\S3]{BrSc:07};
\item\label{h_jcond} $h_j:=\max_{T\in \cT_h(\Gamma_j)}\{\diam(T)\}\to0$ as $j\to\infty$.
\end{enumerate}
Then the BEM solution $u^h_j$ of \eqref{eq:varVhj} converges to the solution $u$ of \eqref{eq:varV} as $j\to\infty$. In this case we say that ``BEM convergence holds''. 
\end{thm}

\begin{proof}
We check the conditions of Theorem \ref{th:NA}. Conditions \eqref{ATcoercive}-\eqref{Mosco_j} were verified already in the proof of Theorem \ref{thm:impedanceconv}. For condition \eqref{th:NA:ii} we take $\tilde{W}:=\left(C_0^\infty(\Gamma)\right)^2$, which is dense in $V(\Gamma)$ by definition of the latter. By \eqref{Jcond}$'$, for any $w\in \tilde{W}$ there exists $J'\in\N$ such that $\supp{w}\subset \Gamma_j$, and hence $w\in (C^\infty_0(\Gamma_j))^2$, for all $j\geq J'$. 
Let $w=(w_1,w_2)$, with $w_1,w_2\in C^\infty_0(\Gamma_j)$, and in the sequel let $C$ denote an arbitrary constant independent of $w$ and $h_j$. By standard piecewise polynomial approximation results (e.g.\ \cite[Theorem~4.4.4 and Theorem 4.4.20]{BrSc:07}) and the nondegeneracy condition \eqref{chunkiness}$'$, the interpolatory projection $\cI_h w_1$ of $w_1$ onto $V^1_{h,0}(\Gamma_j)\subset \tH^1(\Gamma_j)\subset H^1(\Gamma_\infty)$ satisfies 
		\begin{align*}
		\|w_1-\cI_h w_1\|_{H^1(\Gamma_\infty)}&=\|{w_1}|_{{\Gamma_j}}-{\cI_h w_1}|_{\Gamma_j}\|_{W^1({\Gamma_j})}\leq Ch_j|w_1|_\Gamma|_{W^2({\Gamma_j})}=Ch_j|w_1|_{W^2({\Gamma_\infty})},\\
		\|w_1-\cI_h w_1\|_{L^2(\Gamma_\infty)}&=\|{w_1}|_{{\Gamma_j}}-{\cI_h w_1}|_{\Gamma_j}\|_{L^2({\Gamma_j})}\leq Ch_j^2|w_1|_\Gamma|_{W^2({\Gamma_j})}=Ch_j^2|w_1|_{W^2({\Gamma_\infty})},
		\end{align*}
where $|\cdot|_{W^2({\Gamma_\infty})}$ is the usual $W^2$ seminorm, 
so that by function space interpolation
\begin{align}\label{eq:split_bound3}
\|w_1-\cI_h w_1\|_{\tH^{1/2}(\Gamma_j)} = \|w_1-\cI_h w_1\|_{H^{1/2}(\Gamma_\infty)}\leq Ch_j^{3/2}|w_1|_{W^2({\Gamma_\infty})}.
		\end{align}
Similarly (see e.g.\ \cite[Lemma A.1]{chandler-wilde2019}) the $L^2$ projection $\Pi_h w_2$ of $w_2$ onto $V_{h}^0(\Gamma_j)$ satisfies
\begin{align}
\label{eq:split_bound4}
		\|w_2-\Pi_h w_2\|_{\tH^0(\Gamma_j)} = \|w_2-\Pi_h w_2\|_{L^2(\Gamma_\infty)} \leq Ch_j\left|w_2\right|_{W^1(\Gamma_\infty)},
\end{align}
and combining \eqref{eq:split_bound3} and \eqref{eq:split_bound4} gives
\begin{align}
	\inf_{v_h\in V_{h}(\Gamma_j)}\|w-v_h\|_{V(\Gamma^\dag)} &\leq
\|w_1-\cI_h w_1\|_{\tH^{1/2}(\Gamma_j)}+ \|w_2-\Pi_h w_2\|_{\tH^0(\Gamma_j)}  \notag\\
&\leq C \left(h_j^{3/2} \left|w_1\right|_{W^2(\Gamma_\infty)} + h_j \left|w_2\right|_{W^1(\Gamma_\infty)}\right)\notag\\
&\leq C h_j \|w\|_{(W^2(\Gamma_\infty))^2},
\label{eqn:bestapprox}
\end{align}
where in bounding $h_j^{3/2}\leq Ch_j$ we invoked 
\eqref{h_jcond}$'$. 
Moreover, since $h_j$ is the only $j$-dependent factor on the right-hand side of \eqref{eqn:bestapprox}, condition \eqref{th:NA:ii} follows by \eqref{h_jcond}$'$. 
\end{proof}

\begin{rem}
Note that condition \eqref{Jcond}$\,'$ is not implied by \eqref{MoscoPrefract}, as the following example shows. Let $n=3$, let $\Gamma:=(-1,1)^2$, and for each $j\in \N$ let $\Gamma_j:=(-1,1)^2\setminus [-1/(2j),1/(2j)]^2$. Then \eqref{MoscoPrefract} holds, since \eqref{MoscoPrefract} holds with $\Gamma$ replaced by $\Gamma':=(-1,1)^2\setminus\{(0,0)\}$ by \cite[Prop.~4.3(i)]{chandler-wilde2019} (cf.\ the proof of Theorem \ref{prop:Koch} below), and $\tH^{1/2}(\Gamma) =\tH^{1/2}(\Gamma')$ and $\tH^{0}(\Gamma) =\tH^{0}(\Gamma')$ by \cite[Thm~3.12 and Lem.~3.10(xi)]{ChaHewMoi:13}. However, \eqref{Jcond}$\,'$ fails for any $K$ containing $(0,0)$.
\end{rem}

\begin{rem}
\label{rem:CantorDust}
While our main focus is on the case where the screen $\Gamma$ is bounded and open with fractal boundary, Theorem \ref{th:NA} also allows us to comment on the case where $\Gamma$ is a compact fractal with empty (relative) interior, e.g.\ a Cantor set (for $n=2$), Cantor dust (for $n=3$), or Sierpinski triangle (for $n=3$). It turns out that the scattered field for such screens is zero whenever $\Gamma$ has zero $(n-1)$-dimensional Lebesgue measure (which is true for all three examples just given). 

In more detail, let $\Gamma\subset\Gamma_\infty\cong \R^{n-1}$ be non-empty and compact, and let $(\Gamma_j)$ be a sequence of bounded non-empty open subsets of $\Gamma_\infty$ such that 
$\Gamma=\cap_{j\in\N} \overline{\Gamma_j}$,  $\overline{\Gamma_{j+1}}\subset\overline{\Gamma_j}$ and $\tH^{s}(\Gamma_j)= H^{s}_{\overline{\Gamma_j}}$, $j\in\N$ (for example, $\Gamma_j$ could be the standard prefractal approximations to one of the three examples given above, cf.\ \cite[\S\S6.1-6.3]{chandler-wilde2019} and Figure \ref{fig:CantorDustNumerics}(a) below). Then by \cite[Prop.~4.3(ii)]{chandler-wilde2019}, which is stated for $s=-1/2$ but whose proof works also for $s\in \R$, we have $\tH^{s}(\Gamma_j)\xrightarrow M H^{s}_{\Gamma}$, $s\in\R$. 
Hence if $\Gamma$ has zero $(n-1)$-dimensional Lebesgue measure then 
$H^{1/2}_{\Gamma}=H^{0}_{\Gamma}=\{0\}$, so that, arguing as in the proof of Theorem \ref{thm:impedanceconv}, the solution of \eqref{eq:varVhj} with $V_h(\Gamma_j)=V(\Gamma_j)$ converges to $(0,0)$ as $j\to\infty$. Consequently, the scattered wave field for the impedance BVP vanishes in the limit as $j\to\infty$. 
Numerical verification of this fact is given for the middle third Cantor dust in \S\ref{subsec:CantorDustNumerics}.
\end{rem}


\section{Examples}\label{sec:BEMegs}

In this section we apply our convergence theory to some specific examples of fractal screens. 

\subsection{Classical Snowflakes}
\label{sec:ClassicalSnowflakes}

\begin{figure}
\includegraphics[width=\linewidth]{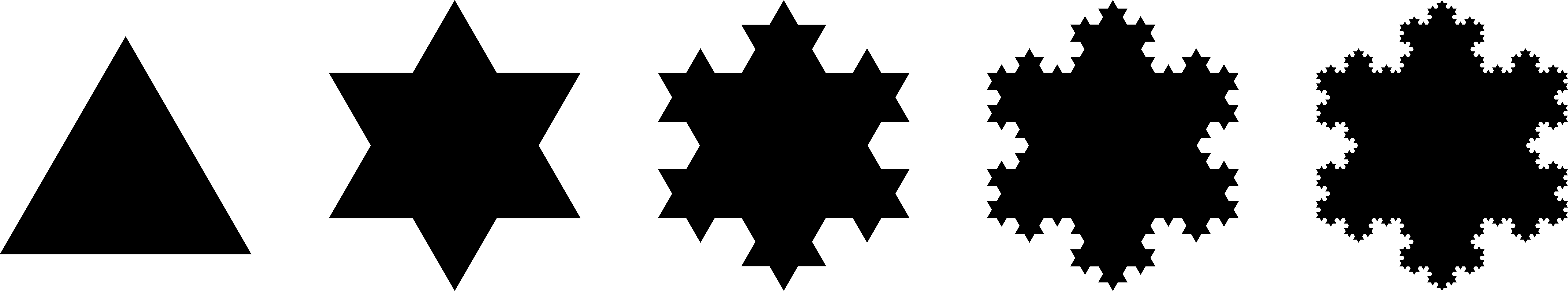}

\vspace{2mm}

\includegraphics[width=\linewidth]{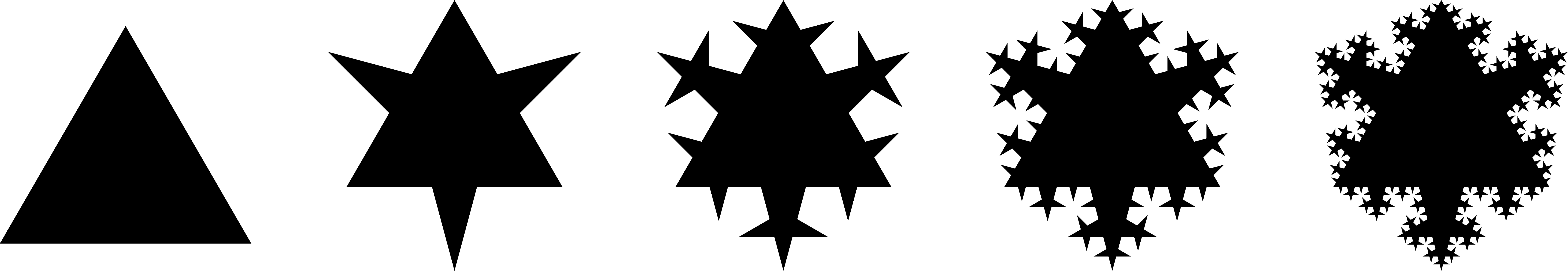}
\caption{The first 5 prefractals $\Gamma_0,\ldots,\Gamma_4$ for the classical snowflakes with $\beta=\pi/6$ (top row) and $\beta=\pi/12$ (bottom row).\label{fig:Snowflakes}}
\end{figure}

The classical snowflakes are a family of bounded open plane sets with fractal boundary, generalising the Koch snowflake. Fix $0<\beta<\pi/2$. 
Let $\Gamma_0$ be the open unit equilateral triangle, and for $j\geq 1$ define $\Gamma_{j}$ iteratively 
by adding open isoceles triangles of apex angle $2\beta$ and leg length $(2(1+\sin\beta))^{-j}$, together with the relative interiors of their bases, at the midpoints of each of the sides of $\Gamma_{j-1}$, as illustrated in Figure~\ref{fig:Snowflakes} for the cases $\beta=\pi/6$, which corresponds to the Koch snowflake, and $\beta=\pi/12$. 
This generates a nested increasing sequence of prefractals satisfying $\Gamma_{j-1}\subset\Gamma_j$, $j\in\N$, from which we define $\Gamma:=\cup_{j\in\N_0}\Gamma_j$. (For full details of the construction see \cite[\S6.4]{chandler-wilde2019}.) 

\begin{prop}
\label{prop:Koch}
Let $0<\beta<\pi/2$, and let $\Gamma$ denote the classical snowflake and $\Gamma_j$, $j\in\N_0$ its prefractal approximations defined above. Let the BEM spaces $V_h(\Gamma_j)$ be defined as in Theorem \ref{thm:impedanceNA}, and suppose that the associated triangulations are uniformly nondegenerate, with $h_j\to0$ as $j\to\infty$. Then BEM convergence holds for the impedance screen problem. 
\end{prop}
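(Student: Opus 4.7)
The plan is to reduce Proposition \ref{prop:Koch} to Theorem \ref{thm:impedanceNA} by verifying its four hypotheses (i)$'$--(iv)$'$. Conditions (iii)$'$ and (iv)$'$ hold by assumption on the mesh family. For condition (ii)$'$, I would exploit the nested increasing structure $\Gamma_0 \subset \Gamma_1 \subset \cdots$ together with $\Gamma = \bigcup_{j\in\N_0}\Gamma_j$. Given any compact $K \subset \Gamma$, the open sets $\{\Gamma_j\}$ form an open cover of $K$; extracting a finite subcover and invoking nestedness yields a single index $J'$ with $K \subset \Gamma_{J'} \subset \Gamma_j$ for all $j \geq J'$.

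The main task is condition (i)$'$, the two Mosco convergences $\tH^{1/2}(\Gamma_j) \xrightarrow{M} \tH^{1/2}(\Gamma)$ and $\tH^0(\Gamma_j) \xrightarrow{M} \tH^0(\Gamma)$. I would handle both uniformly via the following abstract observation: if $\{W_j\}_{j\in\N_0}$ is a nested increasing sequence of closed subspaces of a Hilbert space $H$ with $\overline{\bigcup_j W_j}^{H} = W$, then $W_j \xrightarrow{M} W$. Property (i) of Definition \ref{def:mosco} reduces to density plus a standard diagonal argument (any norm-approximant of $w$ lies in some $W_k$, hence in every $W_j$ with $j\geq k$, so one can select $w_j \in W_j$ with $\|w_j - w\|_H \to 0$); property (ii) is automatic because $W_j \subset W$ for every $j$ and $W$, being a closed subspace of $H$, is weakly closed.

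To apply this principle with $W_j = \tH^s(\Gamma_j)$ and $W = \tH^s(\Gamma)$ for $s \in \{0, 1/2\}$, the inclusions $\Gamma_j \subset \Gamma_{j+1} \subset \Gamma$ give the required Sobolev-space inclusions directly from the definition $\tH^s(\Omega) = \overline{C^\infty_0(\Omega)}^{H^s(\R^{n-1})}$. The density of $\bigcup_j \tH^s(\Gamma_j)$ in $\tH^s(\Gamma)$ then reduces to that of $\bigcup_j C_0^\infty(\Gamma_j)$: any $\phi \in C_0^\infty(\Gamma)$ has compact support, so condition (ii)$'$ (just verified) gives $\supp\phi \subset \Gamma_j$ for all $j$ sufficiently large, whence $\phi \in C_0^\infty(\Gamma_j)$. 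Thus $C_0^\infty(\Gamma) \subset \bigcup_j C_0^\infty(\Gamma_j)$, and density is immediate since $C_0^\infty(\Gamma)$ is dense in $\tH^s(\Gamma)$ by definition.

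I do not anticipate any serious obstacle here, as the substantive analytic work is already encapsulated in Theorem \ref{thm:impedanceNA} and the construction of the prefractals. The $\tH^{1/2}$ half of the Mosco convergence for the classical snowflakes was already established in \cite[\S6.4]{chandler-wilde2019} in the Dirichlet setting; the abstract nested-union argument above shows the same conclusion holds simultaneously at the $\tH^0$ level, which is precisely what is needed to handle the mixed-order product space $V(\cdot) = \tH^{1/2}(\cdot) \times \tH^0(\cdot)$ that governs the impedance problem.
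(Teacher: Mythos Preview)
Your proposal is correct and follows essentially the same route as the paper: reduce to Theorem~\ref{thm:impedanceNA} by noting that (iii)$'$ and (iv)$'$ are assumed, deduce (ii)$'$ from nestedness plus $\Gamma=\bigcup_j\Gamma_j$, and obtain the Mosco convergences (i)$'$ from the nested increasing structure. The only difference is cosmetic: the paper dispatches (i)$'$ by citing \cite[Prop.~4.3(i)]{chandler-wilde2019} (observing that its proof, stated there for $s=-1/2$, works for all $s\in\R$), whereas you spell out that abstract nested-union Mosco argument directly.
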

\begin{proof}
We just need to check conditions \eqref{MoscoPrefract}$'$ and \eqref{Jcond}$'$ in Theorem \ref{thm:impedanceNA}. 
Condition \eqref{Jcond}$'$  is a simple consequence of the nestedness of $\Gamma_j$ and the fact that $\Gamma=\cup_{j\in\N_0}\Gamma_j$. But by \cite[Prop.~4.3(i)]{chandler-wilde2019}, which is stated for $s=-1/2$ but whose proof works also for $s\in \R$, these two facts also imply \eqref{MoscoPrefract}$'$.
\end{proof}

\subsection{Square Snowflake}
\label{sec:SquareSnowflake}

\begin{figure}
\includegraphics[width=\linewidth]{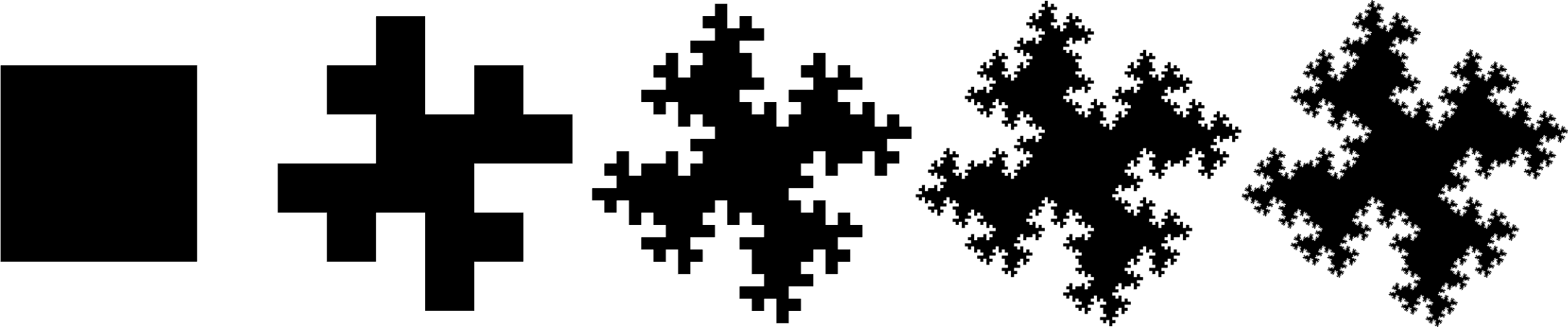}
\caption{The first 5 prefractals $\Gamma_0,\ldots,\Gamma_4$ for the square snowflake. 
\label{fig:SquareSnowflakes}}
\end{figure}

The square snowflake is another bounded open plane set with fractal boundary, the first five prefractals of which are shown in Figure~\ref{fig:SquareSnowflakes}. Here $\Gamma_0$ is the open unit square, and for $j\geq 1$ the prefractal $\Gamma_{j}$ is constructed by removing and adding a square of side-length $1/4^j$ on each of the sides of $\Gamma_{j-1}$. (For details see \cite[\S5.2]{caetano2018}.) 
The resulting sequence of prefractals does not enjoy the nestedness property of the prefractals for the classical snowflakes. However, it was proved in \cite[\S5.2]{caetano2018} that there exists a nested increasing sequence of open sets $(\Gamma_j^-)_{j\in\N_0}$ and a nested decreasing sequence of compact sets $(\Gamma_j^+)_{j\in\N_0}$ such that 
\begin{align}
\label{eqn:inclusions}
\Gamma^-_{j}\subset\Gamma_j\subset\Gamma^+_{j},\qquad j\in \N_0,
\end{align}
and
\begin{align}
\label{eqn:closure}
\overline{\bigcup_{j\in\N_0} \Gamma_j^-} = \bigcap_{j\in \N_0}\Gamma_j^+.
\end{align} 
One can then define $\Gamma = \cup_{j\in\N_0}\Gamma^-_j$.
Furthermore, it was also shown in \cite[\S5.2]{caetano2018} that $\Gamma$ is a so-called \emph{thick} domain (in the sense of Triebel, cf.\ \cite[\S3]{Tri08}), and hence that 
\begin{align}
\label{eqn:tildesubscript}
\tH^s(\Gamma)=H^s_{\overline\Gamma}, \qquad s\in\R.
\end{align}
\begin{prop}
\label{prop:Square}
Let $\Gamma$ denote the square snowflake and $\Gamma_j$, $j\in\N_0$ its prefractal approximations defined above. Let the BEM spaces $V_h(\Gamma_j)$ be defined as in Theorem \ref{thm:impedanceNA}, and suppose that the associated triangulations are uniformly nondegenerate, with $h_j\to0$ as $j\to\infty$. Then BEM convergence holds for the impedance screen problem. 
\end{prop}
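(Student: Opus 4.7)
The plan is to verify conditions \rf{MoscoPrefract}$'$ and \rf{Jcond}$'$ of Theorem \ref{thm:impedanceNA}; the remaining hypotheses are assumed in the statement. Compared with the classical snowflake (Proposition \ref{prop:Koch}) the sequence $(\Gamma_j)$ is no longer nested, so the short argument there must be replaced by one that exploits the sandwich \eqref{eqn:inclusions}, the telescoping identity \eqref{eqn:closure}, and the thickness property \eqref{eqn:tildesubscript}.

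First I would dispatch condition \rf{Jcond}$'$. Given compact $K\subset \Gamma=\cup_{j\in\N_0}\Gamma_j^-$, the open cover $\{\Gamma_j^-\}$ admits a finite subcover of $K$, and since $(\Gamma_j^-)$ is nested increasing, $K\subset \Gamma_{j_0}^-$ for some $j_0$. The left inclusion in \eqref{eqn:inclusions} then gives $K\subset\Gamma_j^-\subset\Gamma_j$ for every $j\geq j_0$. The same observation shows that, for any $\phi\in C^\infty_0(\Gamma)$, $\supp\phi\subset \Gamma_j$ for all sufficiently large $j$, a fact that I would use repeatedly below.

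Next I would establish condition \rf{MoscoPrefract}$'$, i.e.\ Mosco convergence in both $\tH^{1/2}$ and $\tH^0$; the argument is identical in either case, so I let $s\in\{1/2,0\}$. The approximation property (Definition \ref{def:mosco}(i)) is immediate: given $w\in\tH^s(\Gamma)$, pick $\phi\in C^\infty_0(\Gamma)$ with $\|w-\phi\|_{H^s(\Gamma_\infty)}$ as small as desired; the previous paragraph places $\phi\in C^\infty_0(\Gamma_j)\subset \tH^s(\Gamma_j)$ for all $j$ large, and a standard diagonal extraction produces the required sequence $w_j\in \tH^s(\Gamma_j)$. For the weak closedness property (Definition \ref{def:mosco}(ii)), take a strictly increasing subsequence $(j_m)$ and $w_m\in \tH^s(\Gamma_{j_m})$ with $w_m\rightharpoonup w$ in $H^s(\Gamma_\infty)$. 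The right inclusion in \eqref{eqn:inclusions}, combined with the compactness of $\Gamma_{j_m}^+$, yields
\[ \tH^s(\Gamma_{j_m}) \subset H^s_{\overline{\Gamma_{j_m}}} \subset H^s_{\Gamma_{j_m}^+}. \]
Since $(\Gamma_j^+)$ is nested decreasing, for each fixed $m_0$ we have $w_m\in H^s_{\Gamma_{j_{m_0}}^+}$ for all $m\geq m_0$; because $H^s_{\Gamma_{j_{m_0}}^+}$ is weakly closed in $H^s(\Gamma_\infty)$, the weak limit satisfies $w\in H^s_{\Gamma_{j_{m_0}}^+}$. Letting $m_0\to\infty$ and invoking \eqref{eqn:closure} gives $\supp w\subset \bigcap_j \Gamma_j^+=\overline\Gamma$, so $w\in H^s_{\overline\Gamma}$. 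Finally, the thickness identity \eqref{eqn:tildesubscript} upgrades this to $w\in \tH^s(\Gamma)$, completing the verification.

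The only genuinely delicate step is the weak closedness argument: without \eqref{eqn:tildesubscript} one could only conclude $w\in H^s_{\overline\Gamma}$, which in general is strictly larger than $\tH^s(\Gamma)$ and would break the Mosco statement. It is therefore the thickness of the square snowflake, imported from \cite{caetano2018}, that does the crucial work; everything else is essentially bookkeeping once the sandwich \eqref{eqn:inclusions} and the telescoping \eqref{eqn:closure} are in hand.
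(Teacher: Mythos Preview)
Your proof is correct and follows the same approach as the paper. The only difference is that where the paper invokes \cite[Prop.~4.3(iii)]{chandler-wilde2019} (generalised to arbitrary $s$) as a black box for the Mosco convergence \rf{MoscoPrefract}$'$, you unpack that argument explicitly, using precisely the same ingredients the paper lists: the sandwich \eqref{eqn:inclusions}, the identity \eqref{eqn:closure}, and the thickness property \eqref{eqn:tildesubscript}.
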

\begin{proof}
Again, we just need to check conditions \eqref{MoscoPrefract}$'$ and \eqref{Jcond}$'$ in Theorem \ref{thm:impedanceNA}. 
Condition \eqref{Jcond}$'$  is a simple consequence of the nestedness of $\Gamma_j^-$, the first inclusion in \eqref{eqn:inclusions}, and the fact that $\Gamma=\cup_{j\in\N_0}\Gamma_j^-$. But, combined with \eqref{eqn:closure}, \eqref{eqn:tildesubscript} and \cite[Prop.~4.3(iii)]{chandler-wilde2019} (again, generalised from $s=-1/2$ to $s\in \R$), these facts also imply \eqref{MoscoPrefract}$'$.

\end{proof}


\section{Numerical experiments}
\label{sec:Numerical}

In this section we present numerical experiments validating the theoretical results of \S\ref{sec:BEM} and \S\ref{sec:BEMegs}. For brevity we consider only the case $n=3$, which corresponds to scattering by a 2D planar screen in 3D ambient space. To implement the BEM approximation spaces described in Theorem \ref{thm:impedanceNA} we use the open-source software package Bempp \cite{SBAPS15}, which is downloadable from \verb|bempp.com|. 

An obvious challenge in the implementation of BEMs for scattering by fractal screens is that, for examples like those in \S\ref{sec:BEMegs}, as $j$ increases the geometric complexity of the boundary $\partial\Gamma_j$ of the prefractal $\Gamma_j$ increases significantly. This makes it non-trivial to construct a suitable sequence of uniformly non-degenerate meshes, and, moreover, the number of mesh elements required can grow rapidly with increasing $j$. 
The development of efficient meshing strategies for general sequences of prefractals is the focus of ongoing research by the authors, the results of which will be presented in future publications. However, in the present paper we restrict our attention to the simplest possible situation in which one can use a uniform mesh. 

More precisely, we henceforth make the assumption that each prefractal $\Gamma_j$ can be embedded in a uniformly-meshed parallelogram, in the sense explained in \S\ref{sec:BEMimplementation}. This endows the BEM matrix with special structure that can be exploited to allow efficient storage and fast inversion using an iterative method (GMRES in our case) with matrix-vector products computed using the Fast Fourier Transform (FFT).  This assumption is satisfied by the standard Koch snowflake ($\beta=\pi/6$) of \S\ref{sec:ClassicalSnowflakes} and the square snowflake of \S\ref{sec:SquareSnowflake}, numerical results for both of which will be reported in \S\ref{sec:NumericalResults}. 

Our FFT-based approach will allow us to present results for larger problems (specifically, on higher order prefractals) than were considered for the Dirichlet case in \cite{chandler-wilde2019} (where uniform meshes were also used), in spite of the increase in system size associated with the need to discretize two unknowns in the impedance case rather than just one in the Dirichlet case. 
However, 
we expect that to approximate the solutions most efficiently (in terms of the size of the approximation space), and to deal with more general $\Gamma$ (such as the snowflakes with $\beta\neq \pi/6$ in \S\ref{sec:ClassicalSnowflakes}), one should use non-uniform meshes, refined towards areas of geometric complexity, constructed either a priori, as per the mesh used in \cite{Bagnerini13} in the context of heat conduction across fractal interfaces, or adaptively as the computation progresses. 
For such meshes FFT-based inversion is not applicable, and one should consider alternative compression/acceleration techniques such as the fast multipole method or $\mathcal{H}$-matrices. 
Furthermore, to develop a time-efficient solver one would need to consider a suitable preconditioning strategy to accelerate the convergence of the GMRES iteration. (In all the experiments we ran, unpreconditioned GMRES converged within a few hundred iterations, which was acceptable for our purposes.)
However, since the main focus of this paper is on analysis rather than numerics, we leave consideration of these issues for future work.






\subsection{BEM implementation on uniform meshes}
\label{sec:BEMimplementation}

\begin{figure}[t]
	\hspace{-1.1cm}
	\subfloat[][$j=1$, $M_x=M_y=8$, $h_1=1/8$]
	{\includegraphics[width=.55\linewidth]{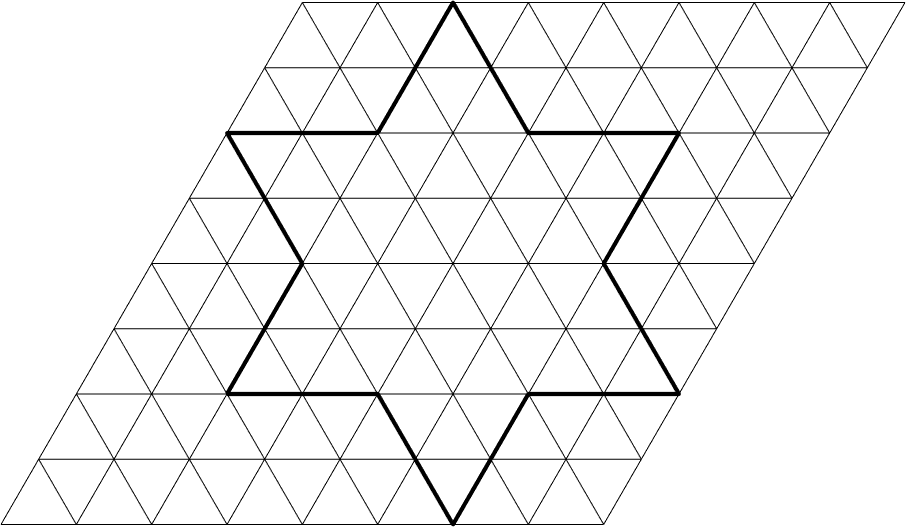}\label{k12}}
	\subfloat[][$j=1$, $M_x=M_y=16$, $h_1=1/16$]
	{\includegraphics[width=.55\linewidth]{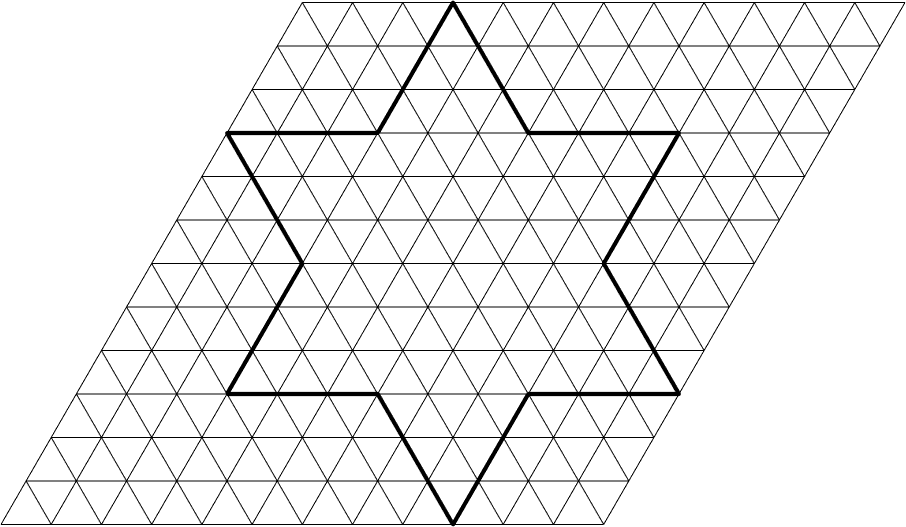}\label{k13}}	
	\\

	\hspace{-1.1cm}
	\subfloat[][$j=2$, $M_x=M_y=16$, $h_2=1/16$]
	{\includegraphics[width=.55\linewidth]{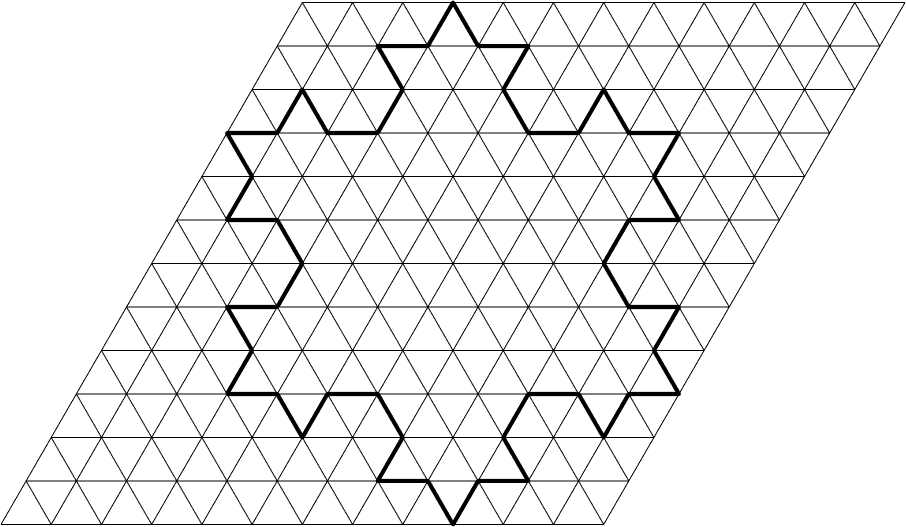}\label{k21}}
	\subfloat[][$j=2$, $M_x=M_y=32$, $h_2=1/32$]
	{\includegraphics[width=.55\linewidth]{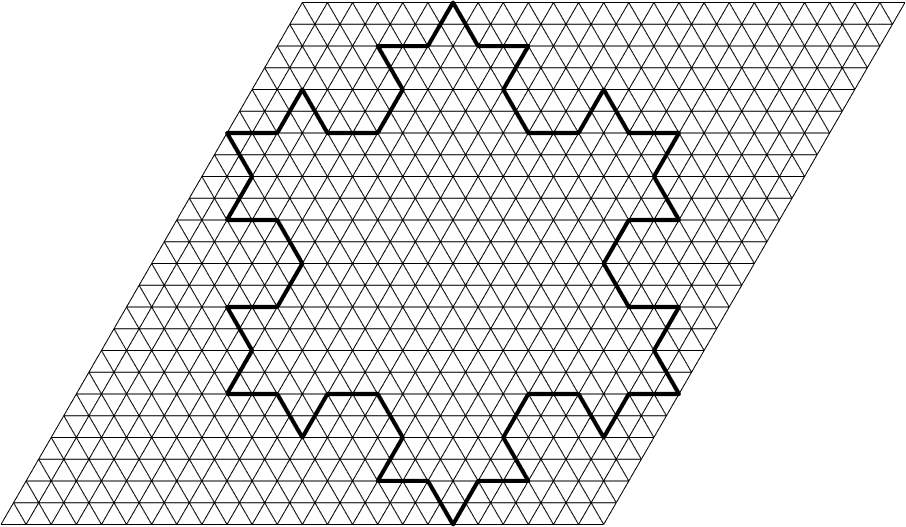}\label{k22}}
	\caption{Prefractals for the Koch snowflake of \S\ref{sec:ClassicalSnowflakes}, embedded in uniformly meshed parallelograms with $\theta=\pi/3$ and $R_x=R_y=1$. Subfigures \protect\subref{k12} and \protect\subref{k13} show $\Gamma_1$, and \protect\subref{k21} and \protect\subref{k22} show $\Gamma_2$, each with two different mesh widths. 
}\label{fig:Koch_eg}
\end{figure}

\begin{figure}[t!]
	\hspace{.8cm}
	\subfloat[][
	$j=1$, $R_x=R_y=1.5$, $M_x=M_y=6$, $h_1=1/4$]
	{\includegraphics[trim = 250 100 100 100, clip,width=.5\linewidth]{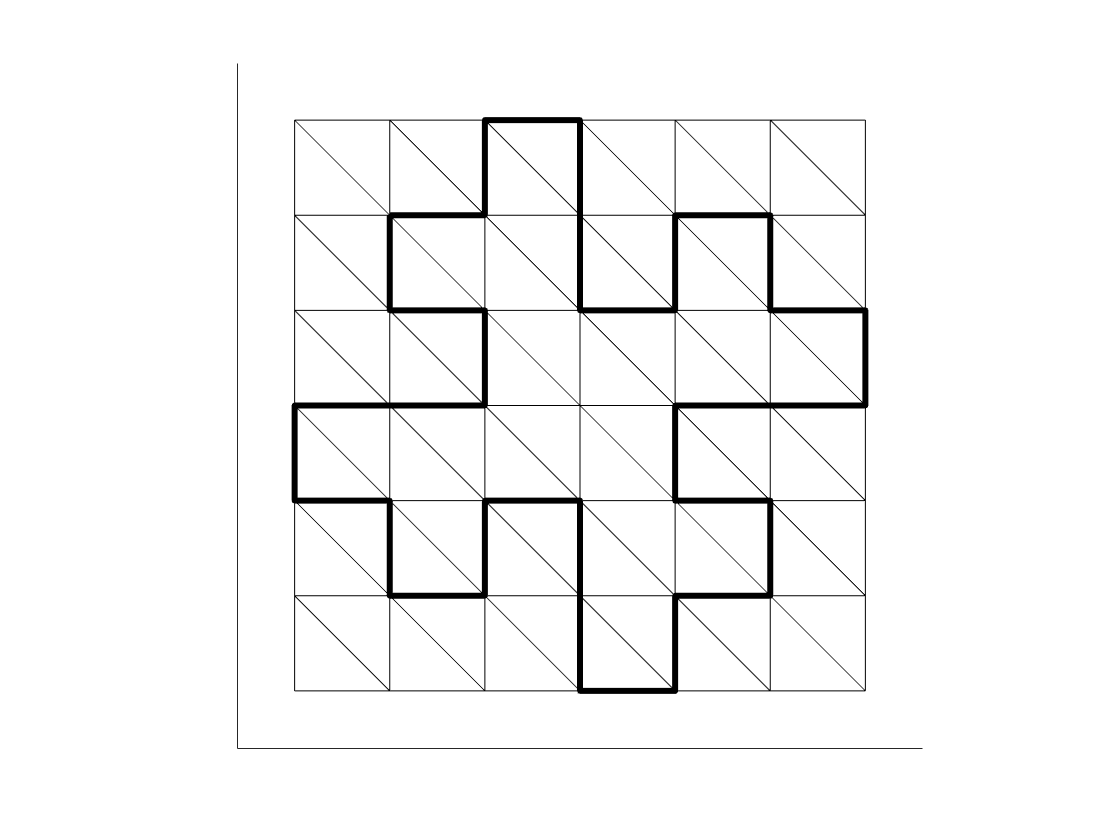}\label{s12}}
	\subfloat[][
		$j=1$, $R_x=R_y=1.5$, $M_x=M_y=12$, $h_1=1/8$]
		{\includegraphics[trim = 250 100 100 100, clip,width=.5\linewidth]{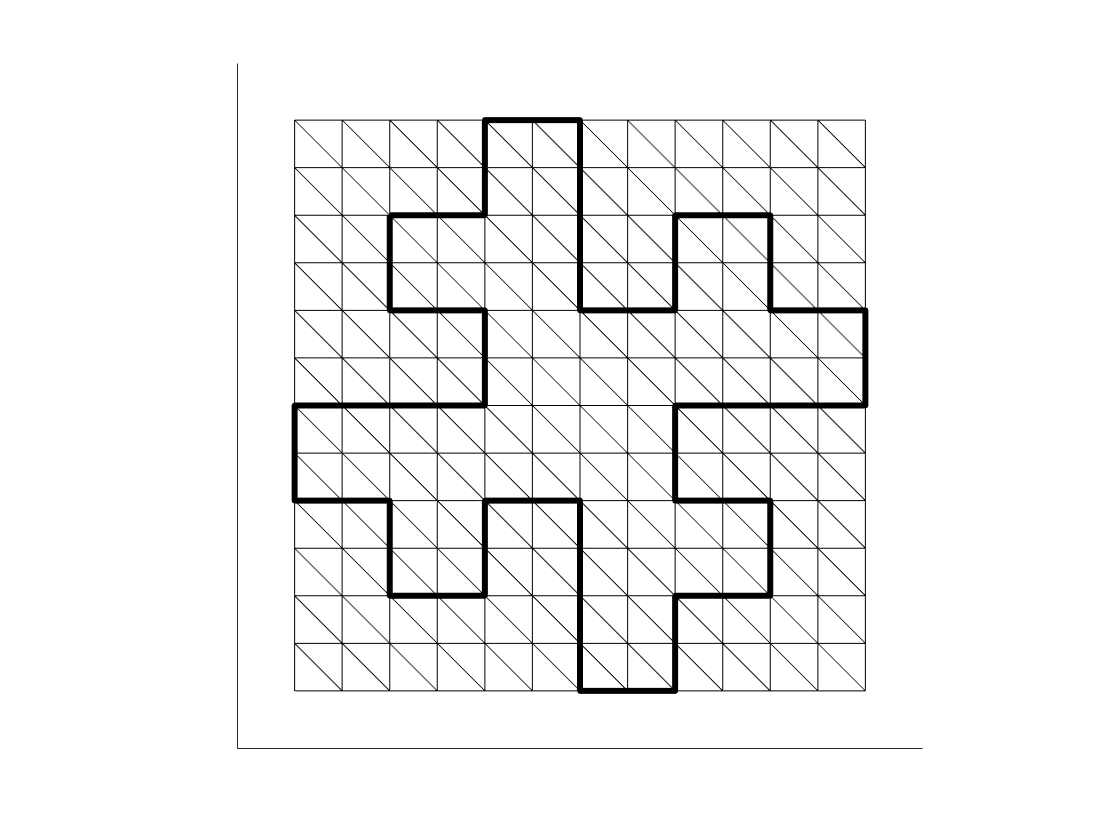}\label{s13}}\\
	
	\hspace{.8cm}
	\subfloat[][
		$j=2$, $R_x=R_y=1.625$, $M_x=M_y=26$, $h_1=1/16$]
		{\includegraphics[trim = 250 100 100 90, clip,width=.5\linewidth]{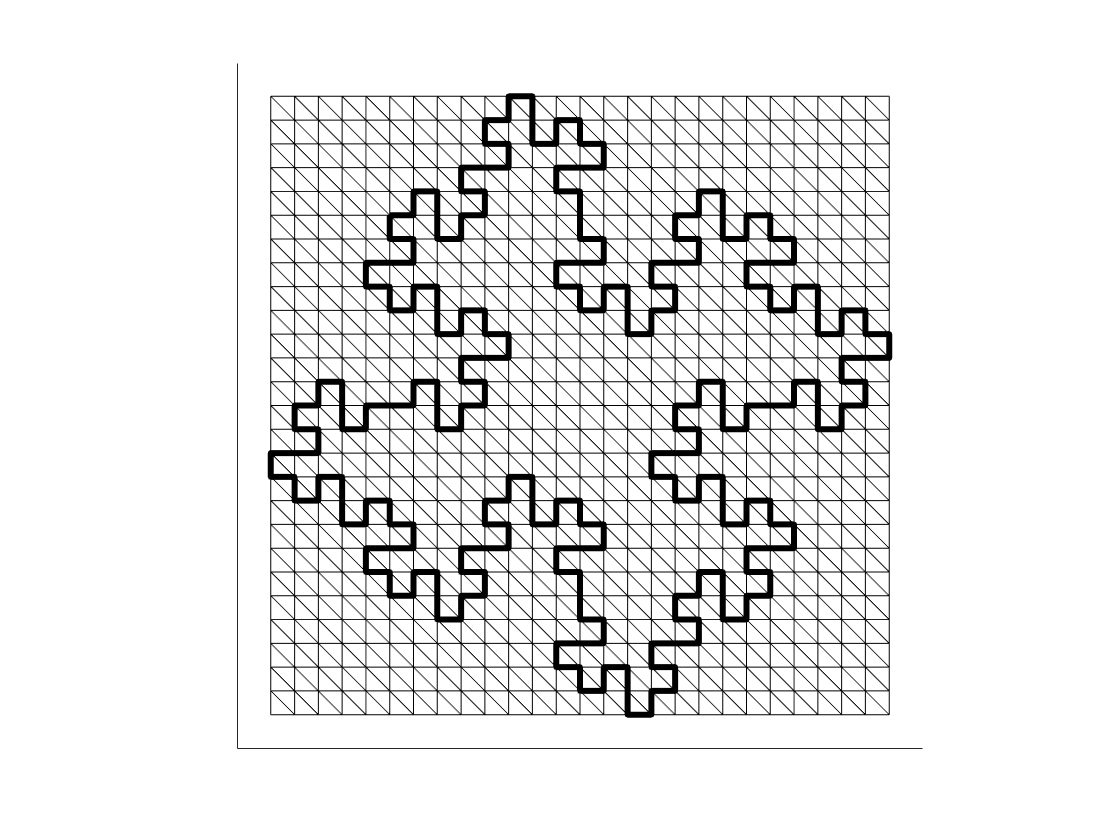}\label{s21}}
	\subfloat[][
			$j=2$, $R_x=R_y=1.625$, $M_x=M_y=52$, $h_1=1/32$]
	{\includegraphics[trim = 250 100 100 90, clip,width=.5\linewidth]{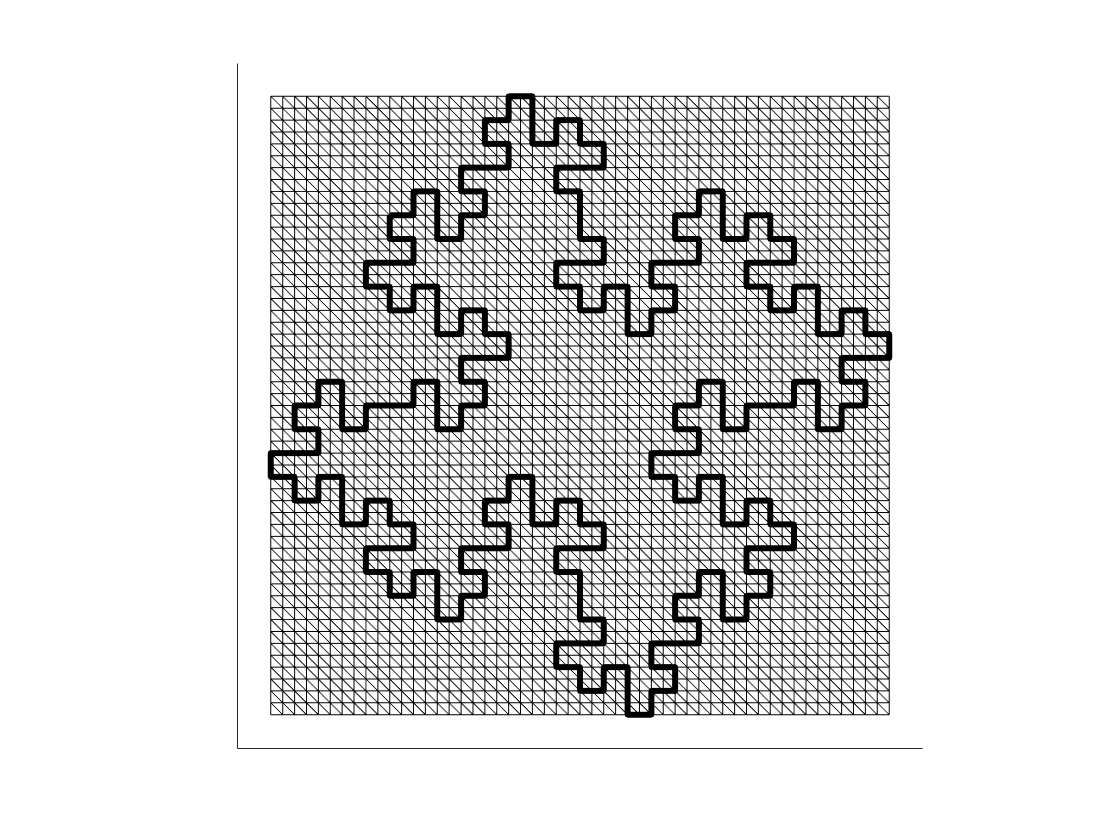}\label{s22}}
		\caption{Analogue of Figure \ref{fig:Koch_eg} for the square snowflake of \S\ref{sec:SquareSnowflake}. Here $\theta=\pi/2$. 
	}\label{fig:square_eg}
\end{figure}

For each $j\in\N$, we assume that 
there exists a parallelogram
\[ \cP_j = \{(x,y)\in \R^2:0\leq y< R_y\sin\theta,\,\frac{y}{\tan{\theta}}<x<R_x+\frac{y}{\tan{\theta}}\},\]
with side lengths $R_x,R_y>0$ and acute interior angle $\theta\in (0,\pi/2]$, equipped with a uniform mesh $\cT_h(\cP_j)$, 
such that the prefractal $\Gamma_j$ and its mesh $\cT_h(\Gamma_j)$ conform to $\cT_h(\cP_j)$ in the sense that 
\[
\cT_h(\Gamma_j)\subset \cT_h(\cP_j).\] 
Specifically, the mesh $\cT_h(\cP_j)$ is assumed to be based on a subdivision of the sides of $\cP_j$ into $M_x$ and $M_y$ elements in the horizontal and vertical directions respectively, comprising a family of $M_xM_y$ ``upward-pointing'' triangles
\[ a e_1 + be_2 + {\rm int}\left(\hull(O,e_1,e_2)\right), \qquad a=0,\ldots,M_x-1, \,b=0,\ldots,M_y-1 \]
and a family of $M_xM_y$ ``downward-pointing'' triangles
\[ a e_1 + be_2 + {\rm int}\left(\hull(e_1,e_2,e_1+e_2)\right), \qquad a=0,\ldots,M_x-1, \,b=0,\ldots,M_y-1, \]
where $O=(0,0)$, $e_1=(R_x/M_x,0)$, and $e_2=((R_y/M_y)\cos{\theta},(R_y/M_y)\sin{\theta})$.
Illustrations of this arrangement for the Koch snowflake and square snowflake are provided in Figures \ref{fig:Koch_eg} and \ref{fig:square_eg}.

Under these assumptions, it is natural to choose the basis of $V_h(\Gamma_j)$ (assumed to have dimension $N_j\in \N$) to be a subset of the basis of $V_h(\cP_j)$ (assumed to have dimension $\tilde N_j\in \N$), since then multiplication by the Galerkin matrix $\mathbf{A}$ on $\Gamma_j$ (associated with the discrete problem \eqref{eq:varVhj}) can be expressed in terms of multiplication by the Galerkin matrix $\widetilde{\mathbf{A}}$ on the larger screen $\cP_j$ by
\begin{equation}\label{eq:ToeBodge}
\bA v = \bB^T\tilde{\bA}\bB v,\qquad v\in\C^{N_j},
\end{equation}
where $\bB$ is a sparse $\tilde N_j \times N_j$ matrix with a single non-zero entry in each column; explicitly, $\mathbf{B}_{pq}=1$ if the $p$th basis function of $V_h(\cP_j)$ coincides with the $q$th basis function of $V_h(\Gamma_j)$, and $\mathbf{B}_{pq}=0$ otherwise. 
The attraction of \eqref{eq:ToeBodge} is that, while $\tilde{\mathbf{A}}$ is in general larger than $\mathbf{A}$ (i.e.\ $\tilde N_j>N_j$), multiplication by $\tilde{\mathbf{A}}$ can be carried out cheaply because of the special structure of $\tilde{\mathbf{A}}$ that arises under certain indexing conventions, as we now elucidate. 

\newcommand{\twoVec}[2]{\left({	\begin{array}{c}
			{#1}\\
			{#2}
	\end{array}}\right)}

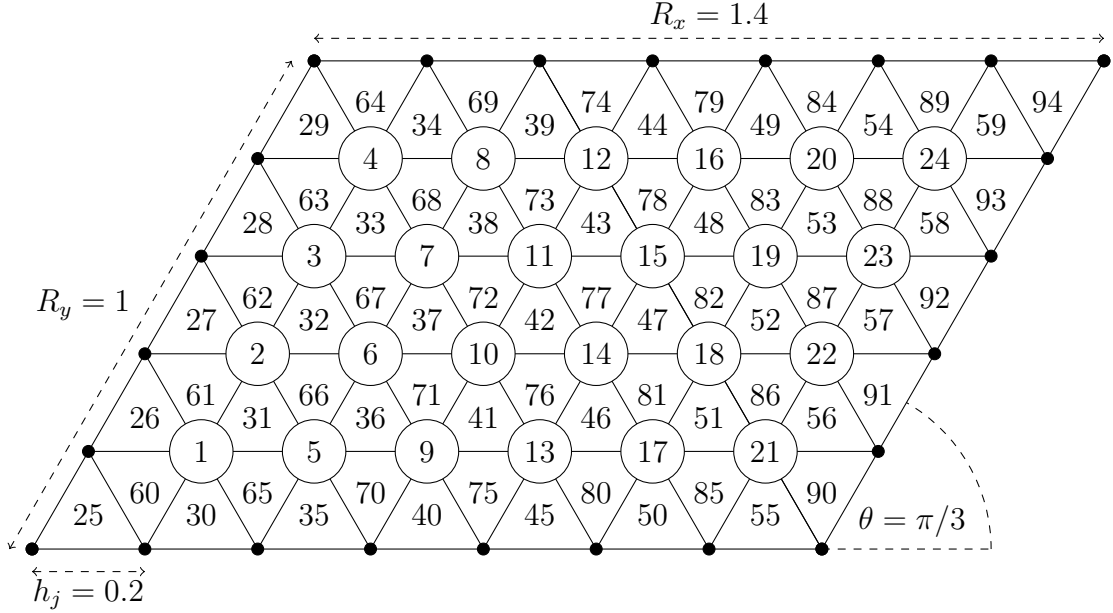
\begin{figure}
	\centering
	\begin{tikzpicture}[ scale=1.5]
	\def\a{.5}
	\def\b{.866}
	\def\Nx{7}
	\def\Ny{5}
	\edef\Nxm{\number\numexpr\Nx-1\relax}
	\edef\Nym{\number\numexpr\Ny-1\relax}
	\edef\NxmNy{\number\numexpr\Nx-\Ny\relax}
	\edef\h{\number\numexpr1/\Nx\relax}
	
	\foreach \y in {0,\Ny}
	\foreach \x in {0,...,\Nx}
	\filldraw (\x+\a*\y,\b*\y) circle (1.5pt);

	\foreach \y in {0,...,\Ny}
	\foreach \x in {0,\Nx}
	\filldraw (\x+\a*\y,\b*\y) circle (1.5pt);
	
	\foreach \x in {0,...,\Nx}
	\draw (\x,0) -- (\x+\a*\Ny,\b*\Ny);
	
	\foreach \y in {0,...,\Ny}
	\draw (\a*\y,\b*\y) -- (\Nx+\a*\y, \b*\y);
	
	\foreach \nn in {1,...,\Ny}
	\draw (\a*\nn,\b*\nn)-- (\nn,0);
	
	\foreach \nn in {2,...,\Nx}
	\draw (\nn+\a*\Ny,\b*\Ny)-- (\Nx+\a*\nn-2*\a,\b*\nn-2*\b);
	
	\foreach \nn in {1,...,\NxmNy}
	\draw (\Ny*\a + \nn,\Ny*\b) -- (\Nx+\nn-2,0);
	
	\foreach \y in {0,...,\Nym}
	\foreach \x in {0,...,\Nxm}
	\edef\index{\number\numexpr\x*\Ny+\y+1+(\Nx-1)*(\Ny-1)\relax}
	\filldraw node at (\x+\a*\y+.5,\b*\y+.3) {\index};
	
	\foreach \y in {0,...,\Nym}
	\foreach \x in {0,...,\Nxm}
	\edef\index{\number\numexpr\x*\Ny+\y+1+\Nx*\Ny+(\Nx-1)*(\Ny-1)\relax}
	\filldraw node at (\x+1+\a*\y,\b*\y+.5) {\index};
	
	\foreach \y in {1,...,\Nym}
	\foreach \x in {1,...,\Nxm}
	{\edef\index{\number\numexpr(\x-1)*\Nym+\y\relax}
		\filldraw[black,fill=white] (\x+\a*\y,\b*\y) circle (8pt);
		\filldraw node at (\x+\a*\y,\b*\y) {\index};}

	\draw[<->,dashed] (0,-.2) -- (1,-.2);
	\filldraw node at (.5,-.4) {$h_j=0.2$};
	
		\draw[<->,dashed] (-.2,0) -- (-.2+\Ny*\a,\Ny*\b);
		\filldraw node at (.5*\Ny*\a-.8,.5*\Ny*\b) {$R_y=1$};

	\draw[<->,dashed] (\Ny*\a,\Ny*\b+.2) -- (\Ny*\a+\Nx,\Ny*\b+.2);
	\filldraw node at ({(\Ny*\a+\Nx+\Ny*\a)/2},\Ny*\b+.4) {$R_x=1.4$};
	
	\draw[dashed] (\Nx+1.5,0) arc (0:60:1.5);
	\draw[dashed] (\Nx,0) -- (\Nx+1.5,0);
	\filldraw node at (\Nx + .8,.25) {$\theta=\pi/3$};
	
	\end{tikzpicture}
	\caption{Schematic showing the indexing of the basis elements in $V_h(\cP_j)$ on the mesh $\cT_{h}(\cP_j)$. The interior mesh nodes (which are indexed before the triangular elements in our scheme) are indicated by circles. Here $\theta=\pi/3$, $R_x=1.4$, $R_y=1$, $M_x=7$, $M_y=5$, and $h_j=0.2$.}\label{fig:para_indexing}
\end{figure}

The standard basis functions in $V_h(\cP_j)$ split into three families: the continuous piecewise linear basis functions for $V_{h,0}^1(\cP_j)$, associated with the interior nodes, and two families of piecewise constant basis functions for $V_{h}^0(\cP_j)$, associated with the upwards- and downwards-pointing triangles. Suppose that we index the basis functions according to the schematic in Figure \ref{fig:para_indexing}, indexing first the nodes, then the upward triangles, then the downward triangles. Then the matrix $\tilde{\mathbf{A}}$ has a $3\times 3$ block structure, and, moreover, by the translation-invariance of the sesquilinear form,  
specifically the fact that 
if $\varphi,\psi,\varphi(\cdot+x), \psi(\cdot+x)\in V(\cP_j)$ for some $x$ then 
\begin{equation}\label{eq:slf_translate}
	\left< A\varphi,\psi\right> = \left< A\varphi(\cdot+x),\psi(\cdot+x)\right>,
	\end{equation}
each of the resulting 9 sub-blocks is block-Toeplitz with Toeplitz blocks (BTTB), with the number and size of each of the constituent sub-sub-blocks indicated in Figure \ref{fig:Astar}. 

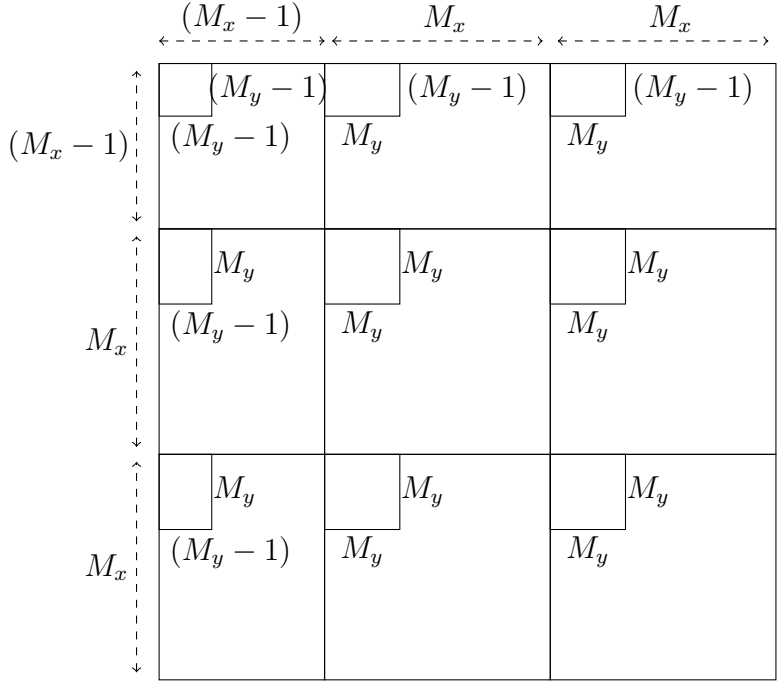
\begin{figure}[t!]
	\centering
	\hspace{-3cm}\begin{tikzpicture}
	\def\W{3}
	\def\Ws{2.2}
	\def\w{1.0}
	\def\ws{.7}
	
	\foreach \x in {0,...,1}
	\foreach \y in {0,...,1}
	{\draw [draw=black] ({\Ws+\W*\x},{\W*(2-\y)}) rectangle ++({\W},{-\W});
		\draw [draw=black] ({\Ws+\W*\x},{\W*(2-\y)}) rectangle++({\w},{-\w});}
	
	{
	\draw [draw=black] (0,2*\W+\Ws) rectangle ++(\Ws,-\Ws);
	\draw [draw=black] (0,2*\W+\Ws) rectangle ++(\ws,-\ws);
}

	\foreach \y in {1,...,2}
		{\draw [draw=black] ({(0)},{(2-\y)*\W}) rectangle++({\Ws},{\W});
		\draw [draw=black] ({(0)},{(3-\y)*\W}) rectangle++({\ws},{-\w});
	}

	\foreach \x in {0,...,1}
	{\draw [draw=black] (\Ws+\W*\x,2*\W+\Ws) rectangle++({\W},{-\Ws});
	\draw [draw=black] (\Ws+\W*\x,2*\W+\Ws)  rectangle++({\w},{-\ws});}
	
	\foreach \x in {1,...,2}
	\foreach \y in {1,...,2}
	{
		\filldraw node at ({\Ws + \W*(\x-1)+.5*\w},{\W*(\y)-\w-.3}) {$M_y$};
		\filldraw node at ({\Ws + \W*(\x-1)+\w+.3},{\W*(\y)-0.5*\w}) {$M_y$};
	}
	
	\foreach \n in {1,...,2}
	{	\draw[<->,dashed] ({\Ws+\W*(\n-1)+.1},{2*\W+\Ws+.3}) -- ({\Ws+\W*\n-.1},{2*\W+\Ws+.3});
		\filldraw node at ({\Ws+\W*(\n-.5)+.1},{2*\W+\Ws+.6} ) {$M_x$};
		\draw[<->,dashed] ({-.3},{\W*(\n-1)+.1}) -- ({-.3},{\W*(\n)-.1});
		\filldraw node at ({-.7},{\W*(\n-.5)}) {$M_x$};
		
		\filldraw node at ({(\n-1)*\W+\Ws+.5*\w},{2*\W+\Ws -\ws-.3}) {$M_y$};
		\filldraw node at ({(\n-1)*\W+\Ws+\w+.9},{2*\W+\Ws -.5*\ws}) {$(M_y-1)$};
		
		\filldraw node at ({\ws+0.3},{(\n)*\W-.5*\w}) {$M_y$};
		\filldraw node at ({.5*\ws+0.6},{(\n)*\W-\w-0.3}) {$(M_y-1)$};
		
	}
	
	\draw[<->,dashed] ({0},{2*\W+\Ws+.3}) -- ({\Ws},{2*\W + \Ws+.3});
	\filldraw node at ({.5*\Ws},{2*\W+\Ws+.6}) {$(M_x-1)$};
	\draw[<->,dashed] ({-.3},{2*\W+.1}) -- ({-.3},{2*\W+\Ws-.1});
	\filldraw node at ({-1.2},{2*\W+.5*\Ws}) {$(M_x-1)$};

	
	\filldraw node at ({.5*\ws+0.6},{2*\W+\Ws-\ws-.3}) {$(M_y-1)$};
	\filldraw node at ({\ws+.75},{2*\W+\Ws-.5*\ws}) {$(M_y-1)$};
	
	\end{tikzpicture}
	\caption{The block structure of the matrix $\tilde{\bA}$ under the indexing convention of Figure \ref{fig:para_indexing}. At the highest level $\tilde{\bA}$ has a $3\times 3$ block structure. Each of the 9 sub-blocks is block-Toeplitz with Toeplitz-blocks (BTTB). The number of sub-sub-blocks in each direction is shown on the \textit{outside} of the figure, while the number of elements in each direction in the sub-sub-blocks is shown on the \textit{inside} of the figure. For example, the (3,3) sub-block has an $M_x \times M_x$ block structure, with each sub-sub-block being an $M_y\times M_y$ matrix.}\label{fig:Astar}
\end{figure}

The BTTB property of the sub-blocks means that the cost of storing the matrix $\tilde{\bA}$ is $O(\tilde N_j)$, 
since $\tilde{\bA}$ can be reconstructed from the first row and column of each sub-sub-block in the first block-row and block-column of each sub-block. Furthermore, for BTTB matrices, cheap matrix-vector products can be obtained by embedding the BTTB matrix inside a larger block-circulant with circulant-blocks (BCCB) matrix, which can be diagonalised by the discrete Fourier transform (for details see e.g.\ \cite{AmGr:86,Bu:85}). As a result, by proceeding block-wise, and using the FFT-based approach on each sub-block\footnote{For completeness we note that the $(1,2)$, $(1,3)$, $(2,1)$ and $(3,1)$ sub-blocks of $\tilde\bA$ are rectangular, and must first be embedded inside a square BTTB matrix (with square Toeplitz blocks) by extending the appropriate diagonals and padding by zeros where necessary, before the BCCB-FFT procedure can be applied.}, one can compute matrix-vector products for $\tilde{\bA}$ with a cost $O(\tilde N_j\log{\tilde N_j})$. 







\subsection{Numerical results}
\label{sec:NumericalResults}
We now present results, obtained with the implementation of \S\ref{sec:BEMimplementation}, for two specific examples. 

We first report results for the Koch snowflake. In Figure \ref{fig:KochResults}(a) and Figure \ref{fig:KochResults}(b) we plot the Dirichlet and Neumann components respectively of the BEM solution for plane wave scattering by the fifth order prefractal $\Gamma_5$ (as defined in \S\ref{sec:ClassicalSnowflakes}) with constant impedance parameters $\lambda_+=1.5k(1+\ri)$ and $\lambda_-=k(1+i)$, incident direction $d=(1,1,-1)/\sqrt{3}$, wavenumber $k=20$ and mesh width $h_5=3^{-5}$. The number of degrees of freedom on $\Gamma_5$ and on its containing parallelogram are $N_5=139261$ and $\tilde{N}_5=314281$ respectively. In Figure \ref{fig:KochResults}(c) we show the resulting total field on three faces of a cube of side length $1.4$ centred on the scatterer. In Figure \ref{fig:KochResults}(d) we present a plot of the relative $L^\infty$ error in the scattered field over this cube, for prefractal levels $j=1,\ldots, 4$, using $j=5$ as a reference solution, for $k\in\{20,10,5\}$.  Here the mesh width $h_j=3^{-j}$, so in the limit $j\to \infty$ we expect BEM convergence by Proposition \ref{prop:Koch}. The results in Figure \ref{fig:KochResults}(d) are consistent with this, with the errors decreasing approximately exponentially with increasing $j$. As is to be expected, for fixed $j$ the errors increase as $k$ increases. 

In Figure \ref{fig:SquareResults} we report analogous results for the square snowflake. Figure \ref{fig:SquareResults}(a) and Figure \ref{fig:SquareResults}(b) show the BEM solution on the fourth order prefractal $\Gamma_4$ (as defined in \S\ref{sec:SquareSnowflake}) with the same wavenumber, incident wave and impedance parameters as in the corresponding plots in Figure \ref{fig:KochResults}. The mesh width on each $\Gamma_j$ is now $\sqrt{2}\times 4^{-j}=4^{-j+1/4}$, and the domain plot in Figure \ref{fig:SquareResults}(c) and the relative $L^\infty$ errors in Figure \ref{fig:SquareResults}(d) are computed on a cube of side length 2 centred on the scatterer. 
The number of degrees of freedom on $\Gamma_4$ and on its containing parallelogram are $N_4=188417$ and $\tilde{N}_4=543577$ respectively.
Again, the convergence results in Figure \ref{fig:SquareResults}(d) are consistent with the theoretical predictions of Proposition \ref{prop:Square}.

\begin{figure}[p]
	\subfloat[][
{${\rm Re}([u])$}
]
	{\includegraphics[width=.45\linewidth]{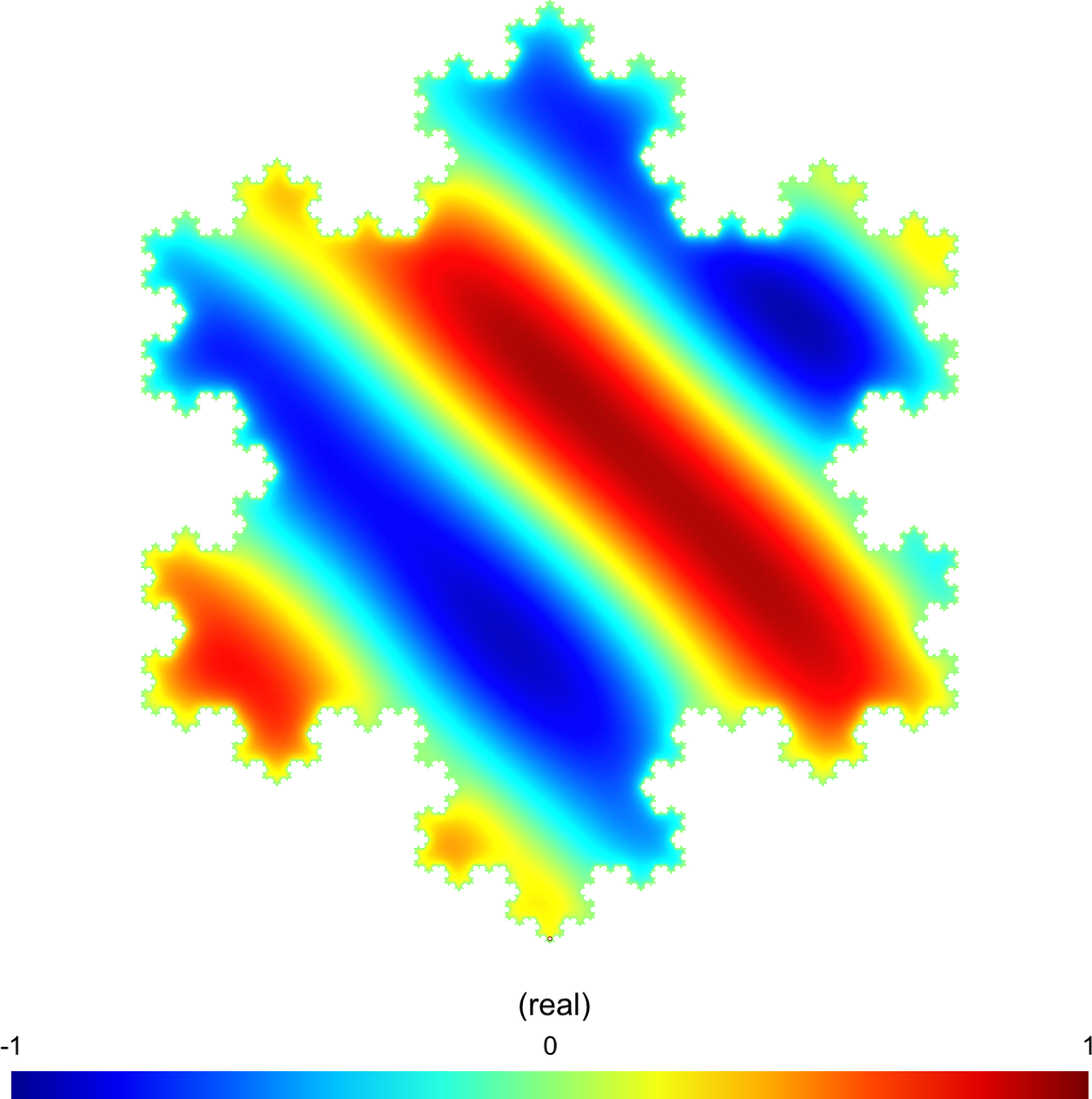}\label{ss11}}
	\hspace{5mm}
	\subfloat[][
{${\rm Re}([\partial_n u])$}
]
		{\includegraphics[width=.45\linewidth]{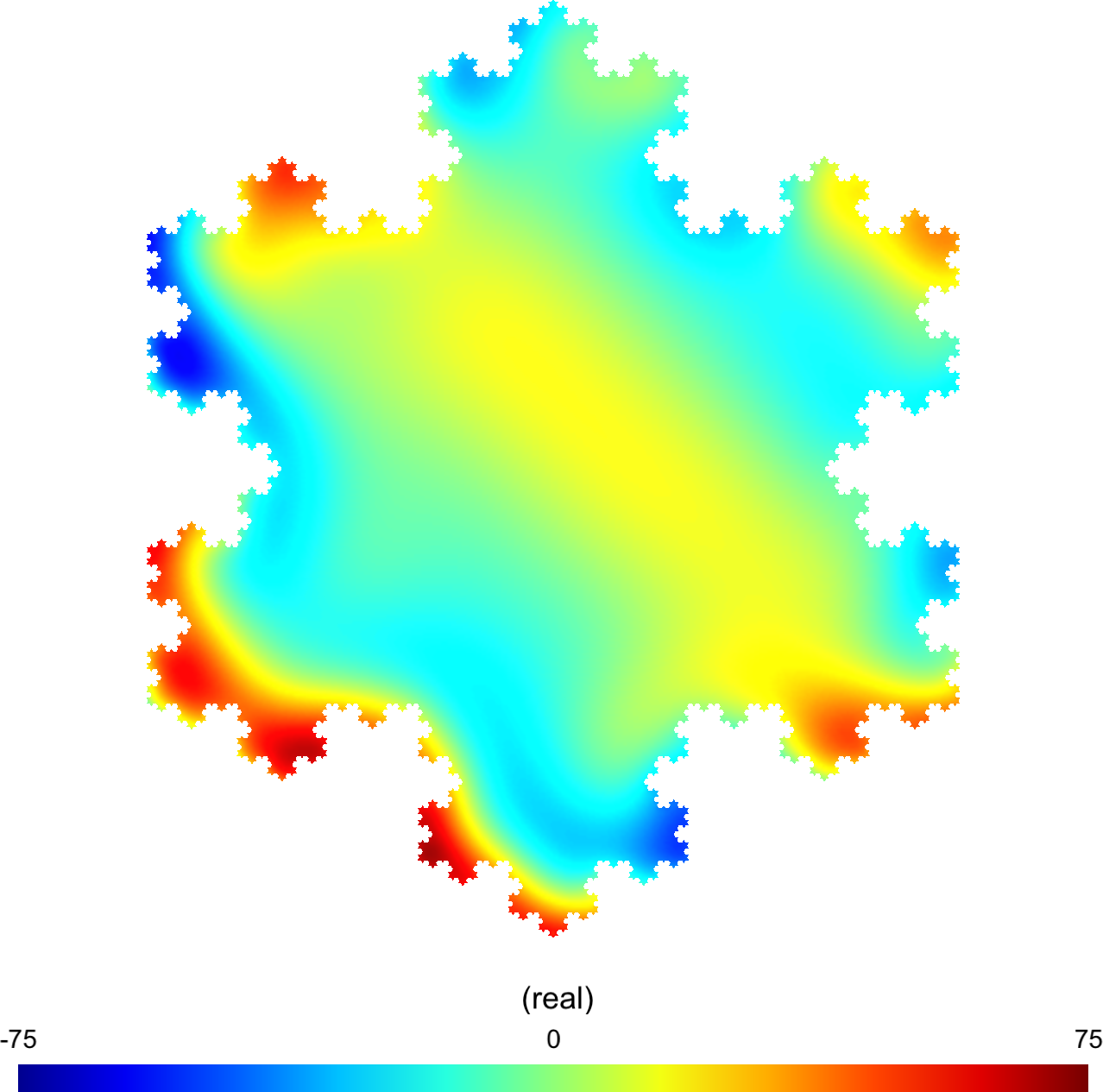}\label{ss12}}\\
	
	\subfloat[][
{${\rm Re}(u+u^i)$}
]
	{\includegraphics[width=.45\linewidth]{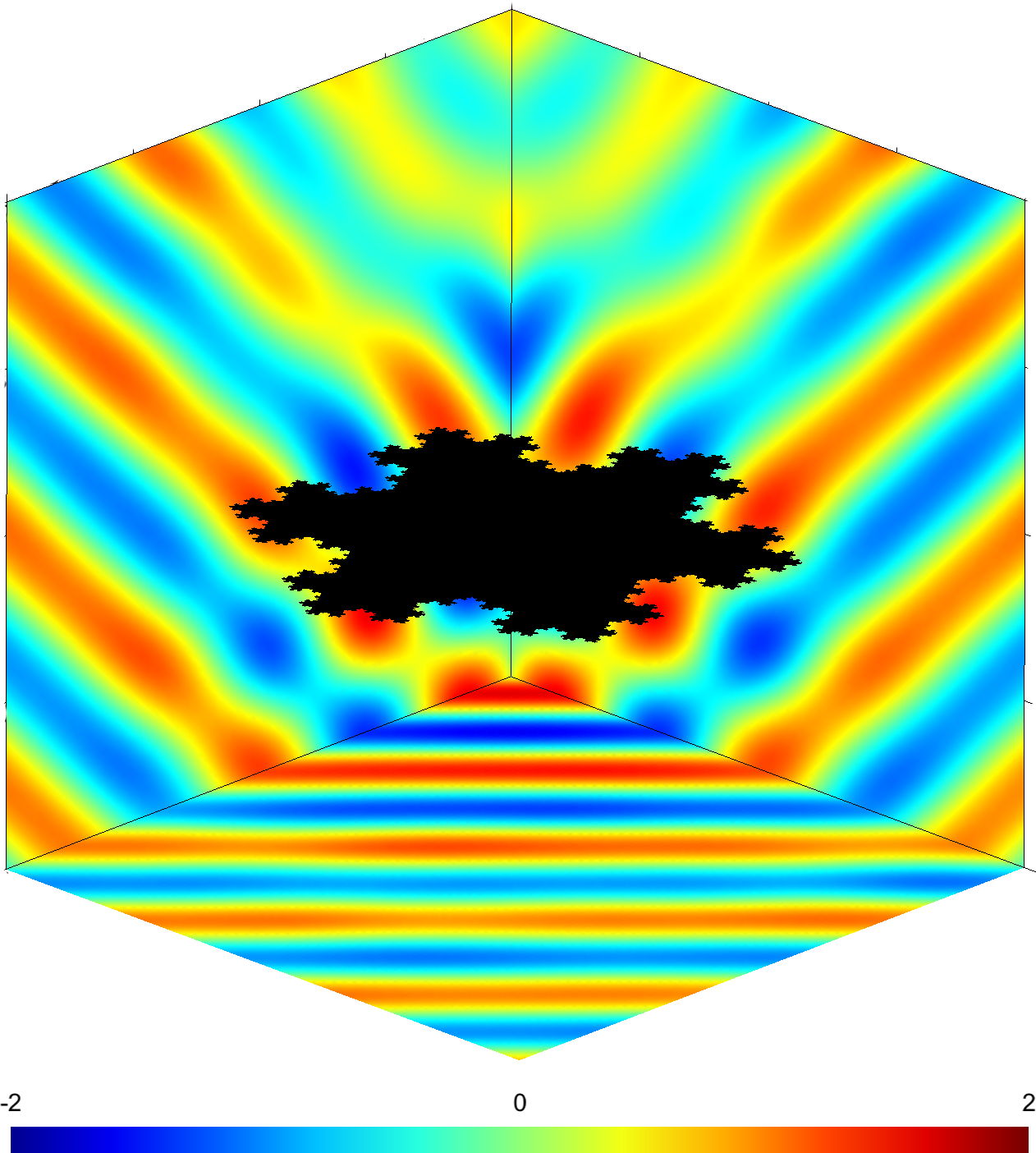}\label{ss21}}
	\hspace{5mm}
	\subfloat[][
{Convergence of $u$}
]
		{\includegraphics[width=.45\linewidth]{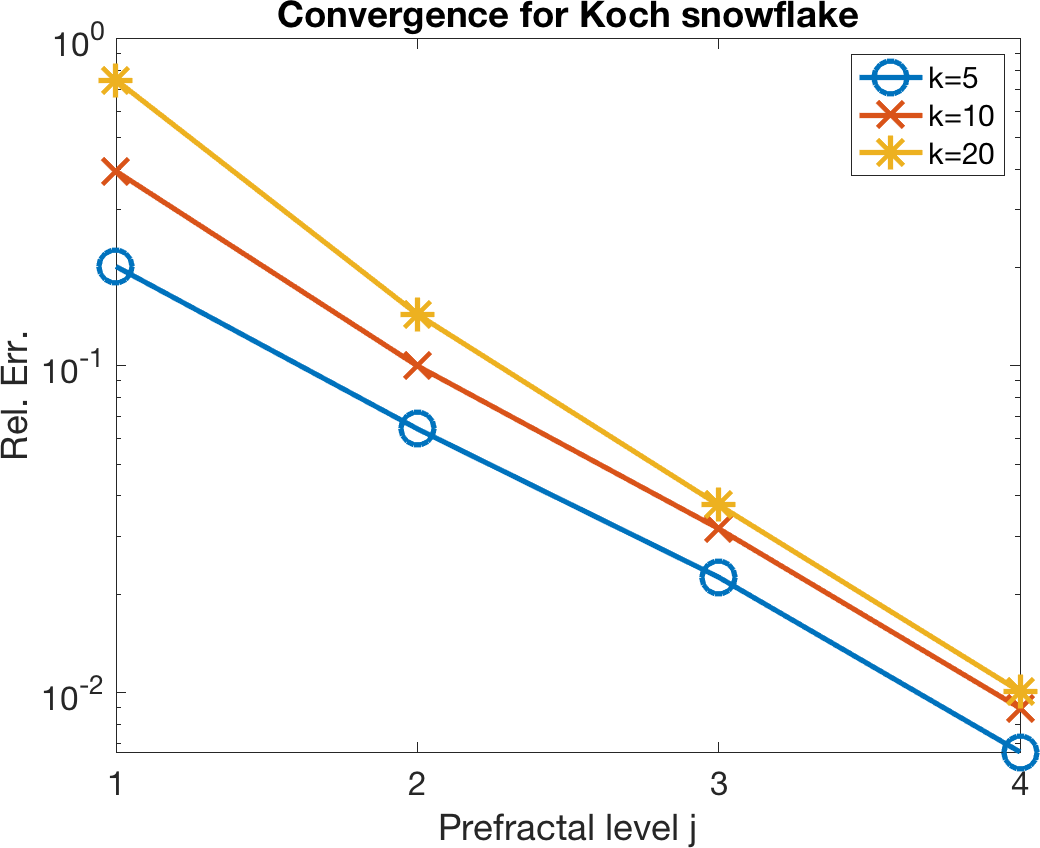}\label{ss22}}
		\caption{Scattering by prefractals of the Koch snowflake. Plots (a) and (b) show respectively the computed Dirichlet and Neumann data on the level $5$ prefractal $\Gamma_5$, for wavenumber $k=20$, plane wave incident direction $d=(1,1,-1)/\sqrt{3}$, impedance parameters $\lambda^+=1.5k(1+\ri)$ and $\lambda^-=k(1+i)$, and mesh width $h_5=3^{-5}$. 
Plot (c) shows the resulting total field $u+u^i$ on three sides of a cube surrounding the scatterer. Plot (d) shows the convergence of the scattered field (relative $L^\infty$ error over the faces of the cube shown in plot (c)) with increasing $j$, 
for three values of $k$, using $j=5$ as reference solution and with mesh width $h_j=3^{-j}$.
	}\label{fig:KochResults}
\end{figure}

\begin{figure}[p]
	\subfloat[][
	{${\rm Re}([u])$}
	]
	{\includegraphics[width=.45\linewidth]{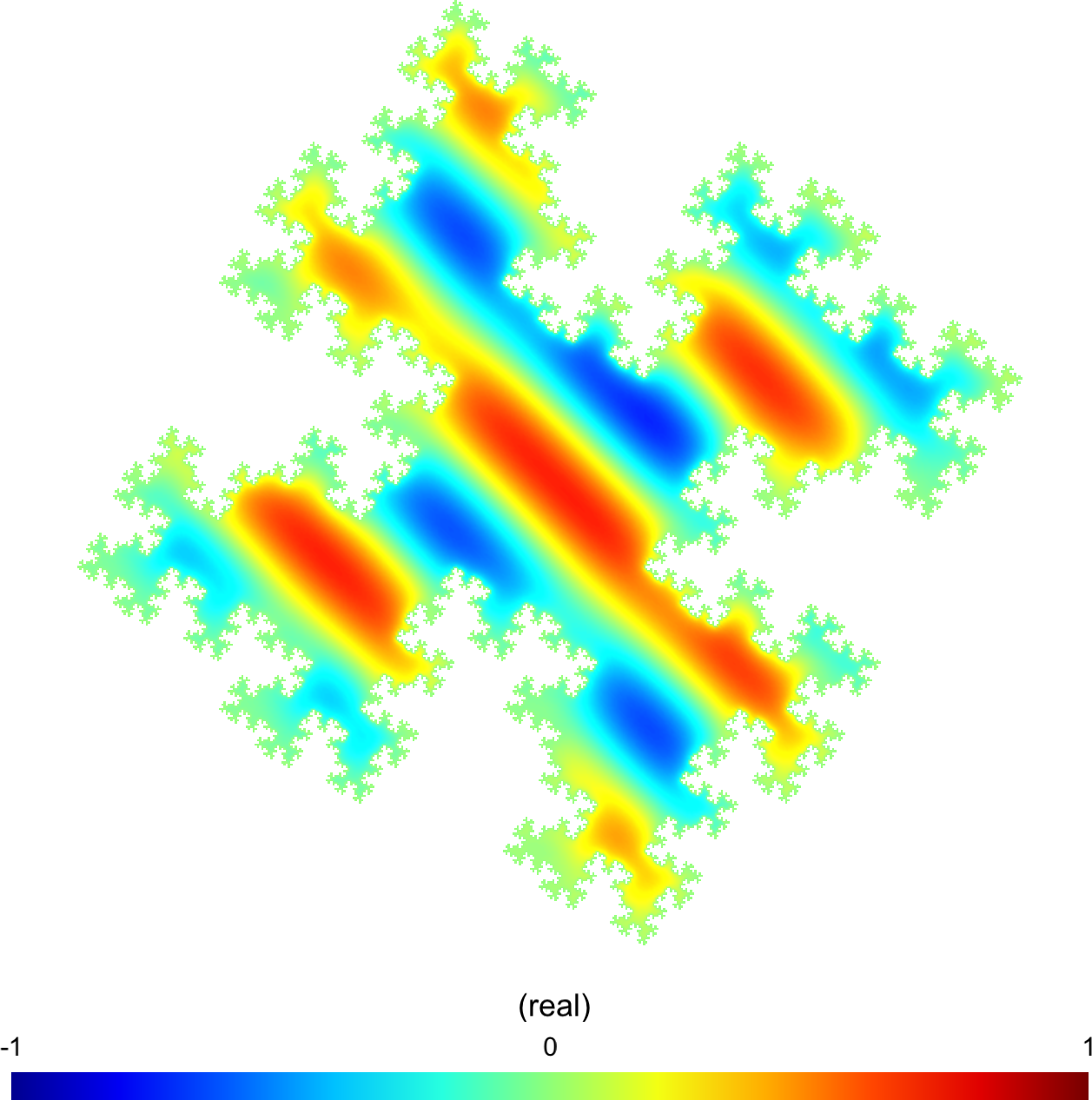}\label{sq11}}
	\hspace{5mm}
	\subfloat[][
	{${\rm Re}([\partial_n u])$}
	]
	{\includegraphics[width=.45\linewidth]{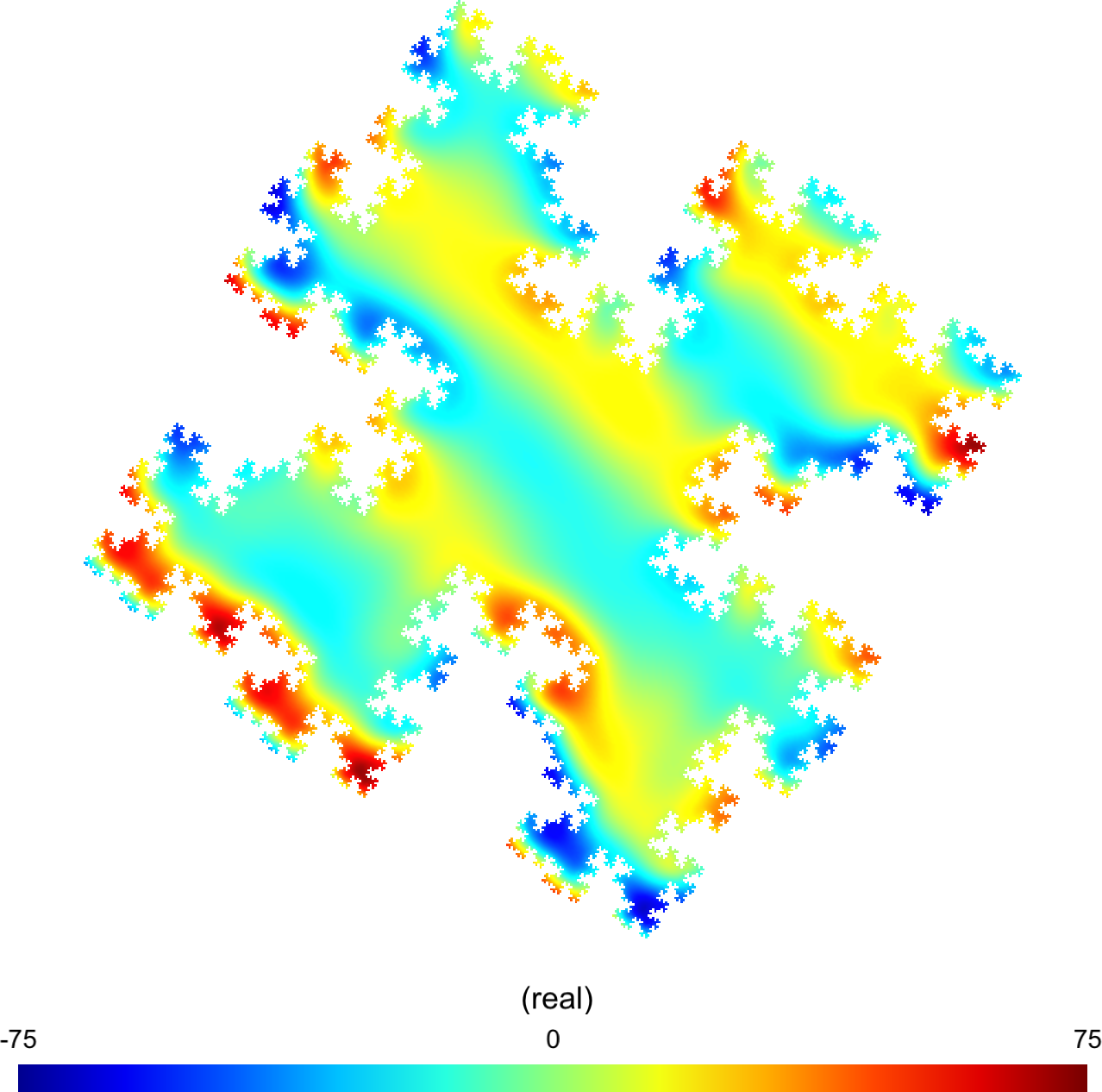}\label{sq12}}\\
	
	\subfloat[][
	{${\rm Re}(u+u^i)$}
	]
	{\includegraphics[width=.45\linewidth]{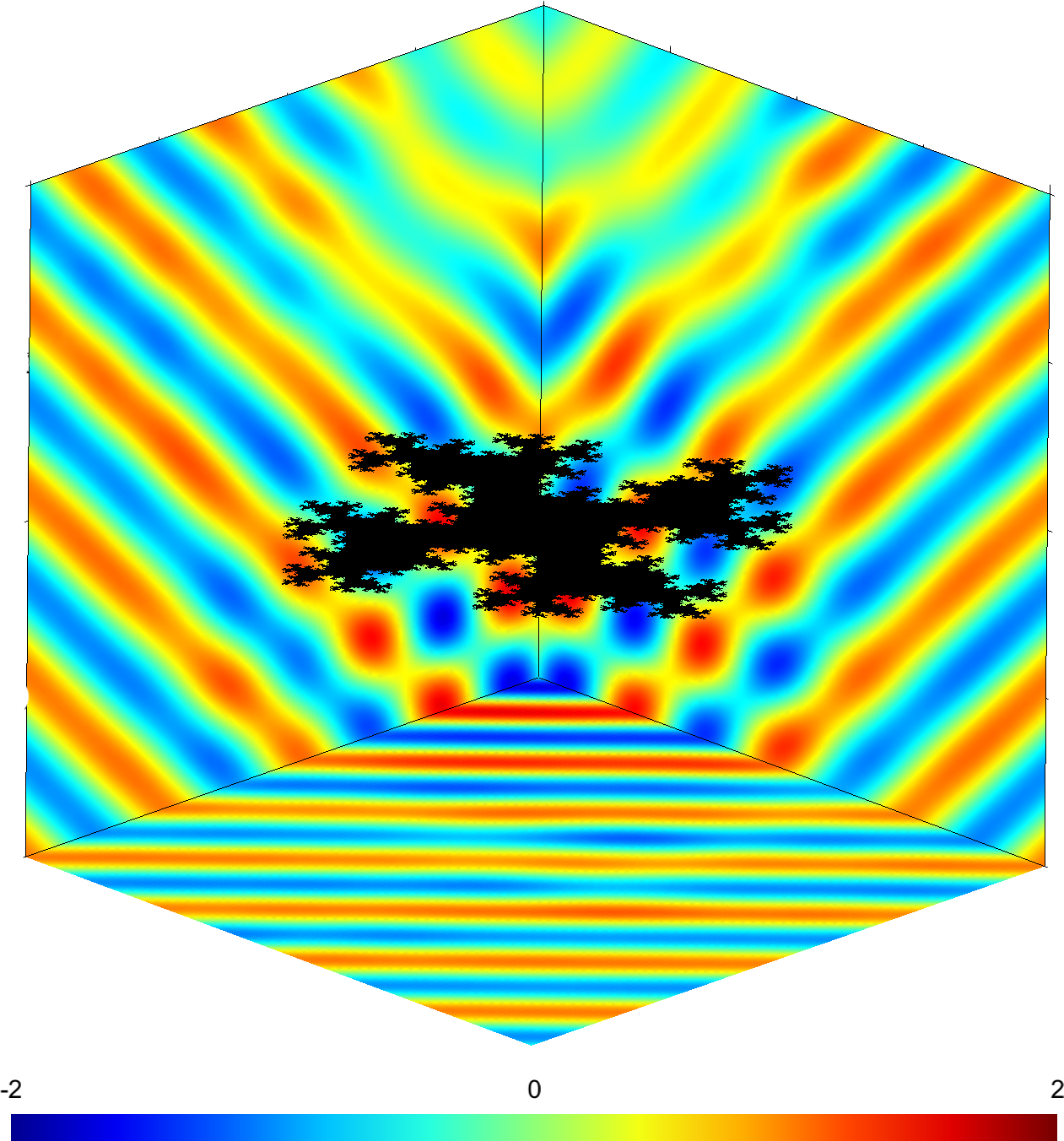}\label{sq21}}
	\hspace{5mm}
	\subfloat[][
	{Convergence of $u$}
	]
	{\includegraphics[width=.45\linewidth]{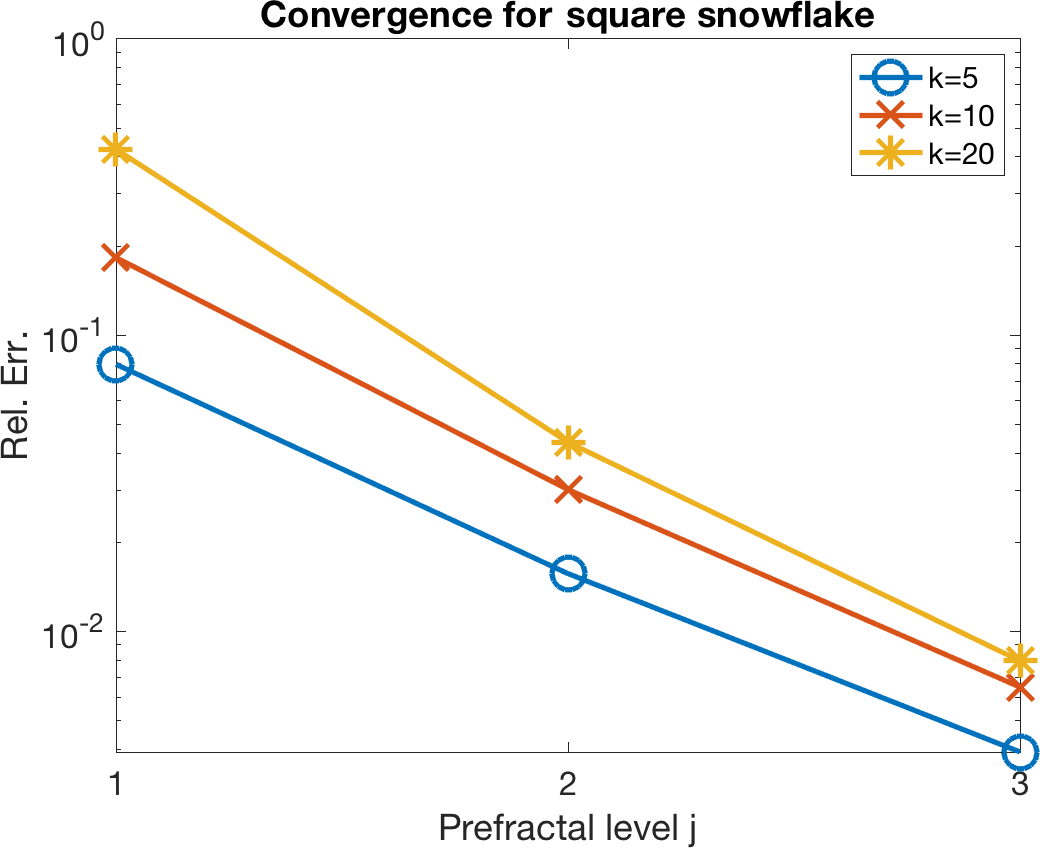}\label{sq22}}
	\caption{Analogue of Figure \ref{fig:KochResults} for the square snowflake. Plots (a)-(c) show results for the level 4 prefractal $\Gamma_4$, with mesh width $h_4=4^{-4+1/4}$. In plot (d) the reference solution is $j=4$, and the mesh width is $h_j=4^{-j+1/4}$.
	}\label{fig:SquareResults}
\end{figure}

\subsection{Scattering by a Cantor dust}
\label{subsec:CantorDustNumerics}
We end the paper by presenting numerical results relating to 
Remark \ref{rem:CantorDust}, which concerns the case where the screen $\Gamma$ is a compact subset of $\Gamma_\infty$ with zero $(n-1)$-dimensional Lebesgue measure. The particular example we consider is where $\Gamma$ is the middle third Cantor dust. Figure \ref{fig:CantorDustNumerics}(a) shows the first six members $\Gamma_0,\ldots,\Gamma_5$ of the standard sequence of prefractal approximations to $\Gamma$, with $\Gamma_0$ the unit square and $\Gamma_j$ formed by removing the ``middle third cross'' from each component of $\Gamma_{j-1}$. Figure \ref{fig:CantorDustNumerics}(b) shows a measure of the magnitude of the corresponding scattered fields (precisely, the $L^\infty$ norm of the scattered field on the boundary of the cube of side length 1.4 centred on each $\Gamma_j$) for wavenumbers $k=5$ and $k=10$, using the same incident wave and impedance parameters as in \S\ref{sec:NumericalResults}.
For this example there is no significant gain to be made in applying the FFT-based solver of \S\ref{sec:BEMimplementation}, because the ratio of the area of $\Gamma_j$ to the area of the smallest parallelogram containing $\Gamma_j$ (i.e.\ the square $\Gamma_0$) decays exponentially (precisely, as $(4/9)^{j}$) with increasing $j$.  Instead, for these experiments each $\Gamma_j$ was meshed with a quasiuniform mesh generated automatically by Bempp, with 26 elements per component of $\Gamma_j$, giving a total of $34\times 4^j$ degrees of freedom on $\Gamma_j$, which equates to 34 and 34816 in the cases $j=0$ and $j=5$ respectively. 
The solutions for $j=0$ and $j=1$ are not expected to be very accurate because for small $j$ we are not using enough elements per wavelength to resolve the solution oscillations. But this is unimportant here as our focus is on the large $j$ behaviour, and for $j\geq 2$ we always have at least 15 elements per wavelength for the largest wavenumber $k=10$.   
According to the theory in Remark \ref{rem:CantorDust}, the magnitude of the scattered field for $\Gamma_j$ should tend to zero as $j\to\infty$, and the numerical results in Figure \ref{fig:CantorDustNumerics}(b) support this, with the $L^\infty$ norm of $u^s$ decaying approximately exponentially with increasing $j$, after an initial preasymptotic phase. 
More precisely, our results are consistent with decay like  $O((4/9)^{j})$ for both wavenumbers, which, as already noted, is the rate at which the area of $\Gamma_j$ tends to zero as $j\to\infty$.

\section{Acknowledgements}
AG and DPH acknowledge support from EPSRC grant EP/S01375X/1, and the authors thank Simon Chandler-Wilde, Timo Betcke and the anonymous reviewer for helpful discussions.

\clearpage

\begin{figure}[t!]
\centering
\subfloat[\label{subfig:CantorDustA}]{
\begin{tikzpicture}
\draw node at (0,0) {\includegraphics[height=60mm]{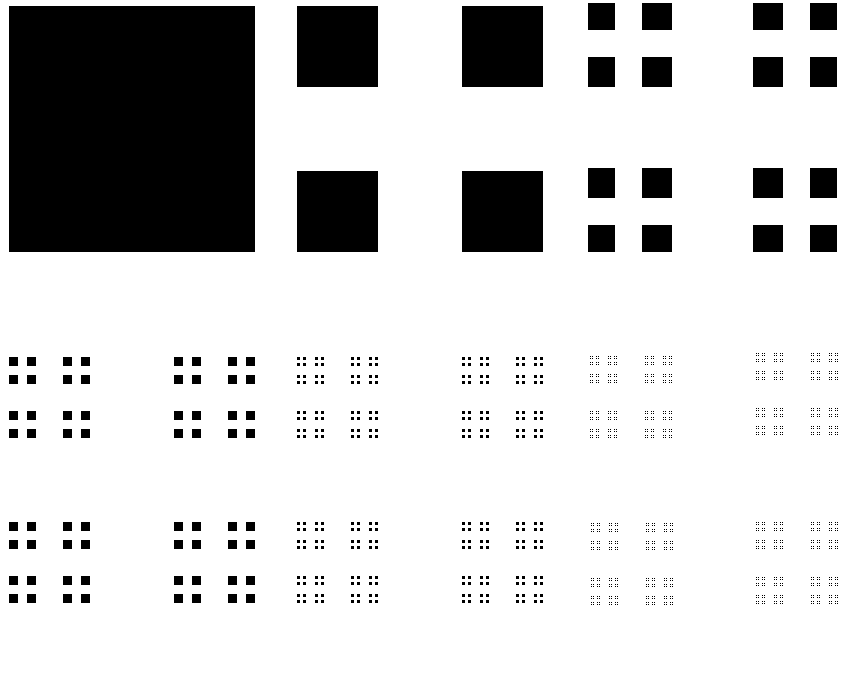}};
\draw node at (-2.5,0.5) {$\Gamma_0$};
\draw node at (0,0.5) {$\Gamma_1$};
\draw node at (2.5,0.5) {$\Gamma_2$};
\draw node at (-2.5,-2.6) {$\Gamma_3$};
\draw node at (0,-2.6) {$\Gamma_4$};
\draw node at (2.5,-2.6) {$\Gamma_5$};
\end{tikzpicture}      
}
\hspace{10mm}
\subfloat[\label{subfig:CantorDustB}]{
\includegraphics[height=65mm]{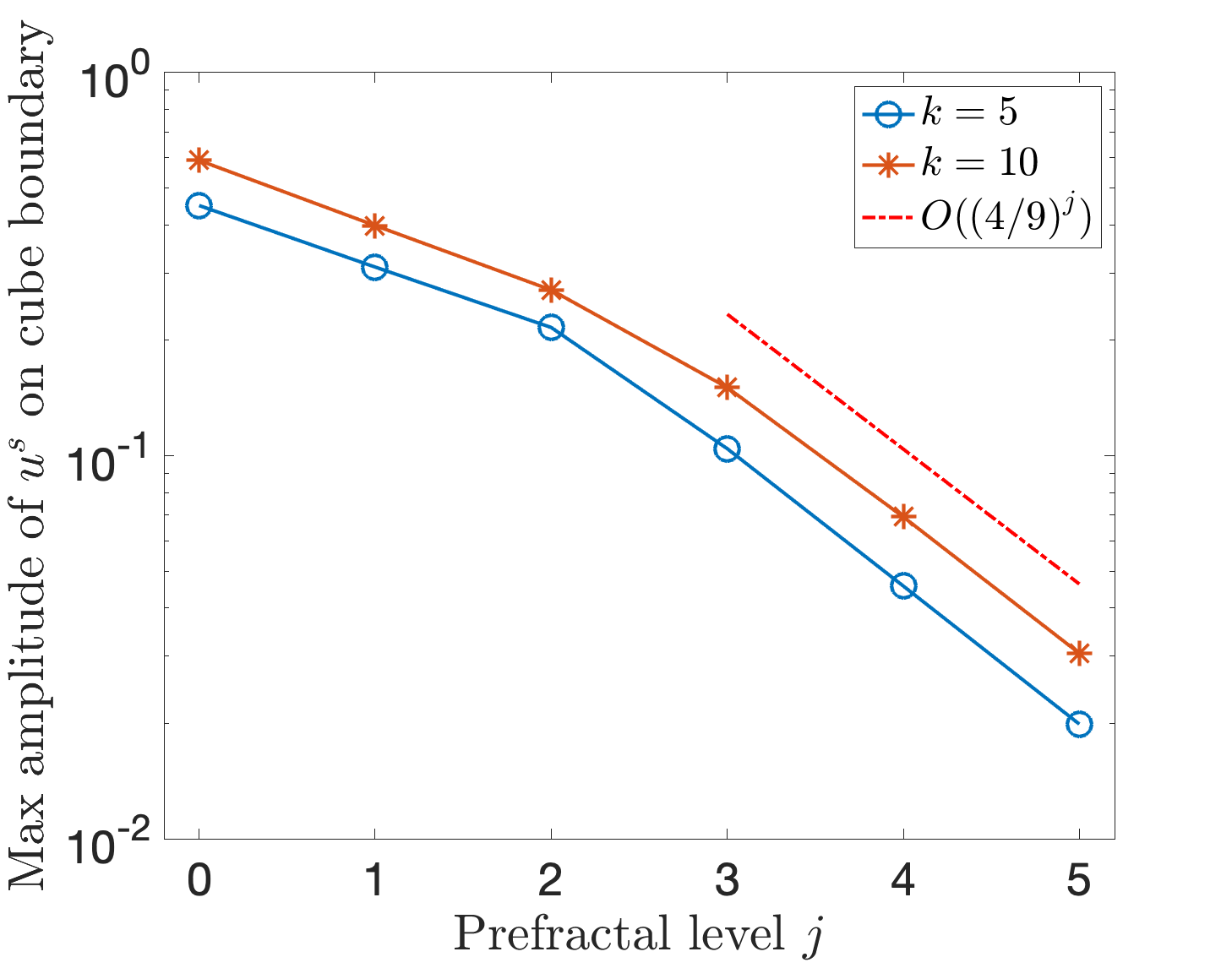}
}
\caption{Scattering by the middle third Cantor dust; (a) shows the prefractals $\Gamma_0, \ldots,\Gamma_5$ and (b) shows the maximum magnitudes of the corresponding scattered fields on the boundary of a cube of side length 1.4 centred on the screen, computed using our BEM.}
\label{fig:CantorDustNumerics}
\end{figure}


\end{document}